\newtheorem{theorem}{Theorem}[section]
\newtheorem{lemma}[theorem]{Lemma}
\newtheorem{proposition}[theorem]{Proposition}
\newtheorem{problem}{Problem}
\newtheorem{remark}{Remark}
\newtheorem{conjecture}{Conjecture}
\numberwithin{equation}{section}
\begin{document}
\title[Rigidity of proper holomorphic maps]{Rigidity of proper holomorphic maps between type-$\mathrm{I}$ irreducible bounded symmetric domains}
\author{Shan Tai Chan}
\address{Department of Mathematics, The University of Hong Kong, Pokfulam Road, Hong Kong}
\curraddr{}
\email{mastchan@hku.hk}
\thanks{}

\subjclass[2010]{Primary 32M15, 32H02, 32L25}

\begin{abstract}
We study proper holomorphic maps between type-$\mathrm{I}$ irreducible bounded symmetric domains. In particular, we obtain rigidity results for such maps under certain assumptions. More precisely, let $f:D^{\mathrm{I}}_{p,q}\to D^{\mathrm{I}}_{p',q'}$ be a proper holomorphic map, where $p\ge q\ge 2$ and $q'<\min\{2q-1,p\}$. Then, we show that $p'\ge p$ and $q'\ge q$.
Moreover, we prove that there exist automorphisms $\psi$ and $\Phi$ of $D^{\mathrm{I}}_{p,q}$ and $D^{\mathrm{I}}_{p',q'}$ respectively, such that $f=\Phi\circ G_h\circ \psi$ for some map
$G_h:D^{\mathrm{I}}_{p,q}\to D^{\mathrm{I}}_{p',q'}$ defined by
$G_h(Z):= \begin{bmatrix} Z & {\bf 0}\\ {\bf 0} & h(Z) \end{bmatrix}$ for all $Z\in D^{\mathrm{I}}_{p,q}$, where $h:D^{\mathrm{I}}_{p,q}\to D^{\mathrm{I}}_{p'-p,q'-q}$ is a holomorphic map.
\end{abstract}

\maketitle

\section{Introduction}
In \cite{Tsai:1993}, Tsai has proven the total geodesy of proper holomorphic maps between irreducible bounded symmetric domains $D$ and $\Omega$ when $\mathrm{rank}(D)=\mathrm{rank}(\Omega)\ge 2$.
In general, there are irreducible bounded symmetric domains $D$ and $\Omega$ with $\mathrm{rank}(\Omega)>\mathrm{rank}(D)\ge 2$ such that some proper holomorphic maps from $D$ to $\Omega$ are nonstandard (i.e., not totally geodesic).
On the other hand, Tu \cite{Tu:2002} and Ng \cite{Ng:2015} have obtained rigidity results on proper holomorphic maps between certain type-$\mathrm{I}$ irreducible bounded symmetric domains of rank $\ge 2$ and of positive rank differences.
Later on, Kim-Zaitsev \cite{Kim_Zaitsev:2015,Kim:2015} have proven that any proper holomorphic map between certain type-$\mathrm{I}$ irreducible bounded symmetric domains of rank $\ge 2$ must be of the specific form up to equivalence under the assumption that such a map extends smoothly around a smooth boundary point of the domain.
Motivated by Kim-Zaitsev \cite{Kim_Zaitsev:2015}, Kim \cite{Kim:2015} and Ng \cite{Ng:2015}, we aim at generalizing the rigidity results of Kim-Zaitsev \cite{Kim_Zaitsev:2015} and Kim \cite{Kim:2015} by removing the smoothness assumption in this article.
This will also generalize some results of Ng \cite{Ng:2015}.
In our study, we will make use of the holomorphic double fibrations, fibral-image-preserving maps between type-$\mathrm{I}$ irreducible bounded symmetric domains and their (local) moduli maps introduced by Ng \cite{Ng:2015}.
Indeed, the study of such (local) moduli maps is particularly important for the classification of proper holomorphic maps between type-$\mathrm{I}$ irreducible bounded symmetric domains (see Seo \cite{Seo:2016}).

Let $r$ and $s$ be positive integers. Denote by $M(r,s;\mathbb C)$ the set of all $r$-by-$s$ complex matrices. For any matrix $Z\in M(r,s;\mathbb C)$, we denote by $Z^T\in M(s,r;\mathbb C)$ the transpose of $Z$. The type-$\mathrm{I}$ irreducible bounded symmetric domains are defined by
\[ D^{\mathrm{I}}_{r,s}:=\left\{Z\in M(r,s;\mathbb C): {\bf I}_s - \overline{Z}^T Z > 0 \right\}. \]
We will simply call them the type-$\mathrm{I}$ domains.
On the other hand, the \emph{generalized complex balls} are defined by
\[ D_{r,s}:= \left\{[z_1,\ldots,z_{r+s}]\in \mathbb P^{r+s-1}:\sum_{j=1}^r |z_j|^2 > \sum_{j=r+1}^{r+s} |z_j|^2 \right\} \]
for positive integers $r$ and $s$.
In what follows, we will write $[A,B]_r\in D_{r,s}$ to indicate that $A\in M(1,r;\mathbb C)$ and $B\in M(1,s;\mathbb C)$ satisfy $[A,B]\in \mathbb P^{r+s-1}$ and $A\overline{A}^T-B\overline{B}^T>0$.
We also write 
\[ [z_1,\ldots,z_{n+1}]=[z_1,\ldots,z_r;z_{r+1},\ldots,z_{n+1}]_{r} \in \mathbb P^{n},\quad 1\le r\le n, \]
for convenience.

Denote by $\mathrm{Aut}(U)$ the automorphism group of a connected complex manifold $U$. Let $F,G:X\to Y$ be holomorphic maps between connected complex manifolds $X$ and $Y$. We say that $F$ and $G$ are \emph{equivalent}, which is written as $F\sim G$, if $F=\Psi\circ G\circ \psi$ for some $\Psi\in \mathrm{Aut}(Y)$ and $\psi\in \mathrm{Aut}(X)$. A holomorphic map $f:D^{\mathrm{I}}_{p,q}\to D^{\mathrm{I}}_{p',q'}$ is said to be \emph{standard} if $f$ is equivalent to the standard embedding $\iota:D^{\mathrm{I}}_{p,q}\hookrightarrow D^{\mathrm{I}}_{p',q'}$ given by $\iota(Z):=\begin{bmatrix} Z & {\bf 0}\\ {\bf 0} & {\bf 0} \end{bmatrix}$. For positive integers $p,q,p'$ and $q'$ such that $p< p'$ and $q< q'$, we define the map $G_{h}:D^{\mathrm{I}}_{p,q}\to D^{\mathrm{I}}_{p',q'}$ by 
\[ G_{h}(Z):= \begin{bmatrix} Z & {\bf 0}\\ {\bf 0} & h(Z)\end{bmatrix}\quad \forall\; Z\in D^{\mathrm{I}}_{p,q}, \]
where $h:D^{\mathrm{I}}_{p,q} \to D^{\mathrm{I}}_{p'-p,q'-q}$ is a holomorphic map. Then, we say that a holomorphic map $f:D^{\mathrm{I}}_{p,q}\to D^{\mathrm{I}}_{p',q'}$ is of \textbf{diagonal type} if and only if $f$ is equivalent to the map $G_{h}$ for some holomorphic map $h:D^{\mathrm{I}}_{p,q} \to D^{\mathrm{I}}_{p'-p,q'-q}$ (cf.\,Ng-Tu-Yin \cite{NTY:2016}). A holomorphic map $f:D^{\mathrm{I}}_{p,q}\to D^{\mathrm{I}}_{p',q'}$ is said to be of \textbf{non-diagonal type} if and only if $f$ is not of diagonal type.
Then, a standard map $f:D^{\mathrm{I}}_{p,q}\to D^{\mathrm{I}}_{p',q'}$ is automatically of diagonal type.

Denote by $\mathcal O(M_1,M_2)$ the space of all holomorphic maps from a complex manifold $M_1$ to a complex manifold $M_2$.
Let $\mathrm{PH}(D^{\mathrm{I}}_{p,q},D^{\mathrm{I}}_{p',q'})$ be the space of all proper holomorphic maps from $D^{\mathrm{I}}_{p,q}$ to $D^{\mathrm{I}}_{p',q'}$, where $p,q,p'$ and $q'$ are positive integers.
Then, for any positive integers $p,q,p'$ and $q'$ satisfying $p<p'$ and $q<q'$, we have the map $\mu_{p,q;p',q'}$ $:$ $\mathcal O(D^{\mathrm{I}}_{p,q},D^{\mathrm{I}}_{p'-p,q'-q})$ $\to$ $\mathrm{PH}(D^{\mathrm{I}}_{p,q},D^{\mathrm{I}}_{p',q'})$ defined by $\mu_{p,q;p',q'}(h):=G_h$, i.e.,
\[ \mu_{p,q;p',q'}(h)(Z)=\begin{bmatrix}
Z & {\bf 0}\\
{\bf 0} & h(Z)
\end{bmatrix}\quad \forall\;Z\in D^{\mathrm{I}}_{p,q}. \]

Motivated by Kim-Zaitsev \cite{Kim_Zaitsev:2015} and Kim \cite{Kim:2015}, we state a conjecture regarding the rigidity of proper holomorphic maps between type-$\mathrm{I}$ irreducible bounded symmetric domains of rank $\ge 2$, as follows.
\begin{conjecture}\label{Conj1}
Let $F:D^{\mathrm{I}}_{p,q}\to D^{\mathrm{I}}_{p',q'}$ be a proper holomorphic map, where $p\ge q \ge 2$ and $q' < p$. Suppose {\rm (1)} $p' < 2p-1$ or {\rm (2)} $q'<2q-1$ holds. Then, we have $p\le p'$, $q\le q'$ and $F$ is of diagonal type, i.e., $F$ is equivalent to the map $G_h$ for some holomorphic map $h:D^{\mathrm{I}}_{p,q} \to D^{\mathrm{I}}_{p'-p,q'-q}$.
\end{conjecture}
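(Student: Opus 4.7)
The plan is to establish this rigidity by fitting $F$ into the framework of fibral-image-preserving maps and their local moduli maps developed by Ng \cite{Ng:2015}. Concretely, I would show that $F$ preserves a natural fibration on the source in a way that, after normalization by automorphisms $\psi\in\mathrm{Aut}(D^{\mathrm{I}}_{p,q})$ and $\Phi\in\mathrm{Aut}(D^{\mathrm{I}}_{p',q'})$, it lies in the image of the moduli map $\mu_{p,q;p',q'}$, giving the desired form $F=\Phi\circ G_h\circ\psi$.

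The first step is to restrict $F$ to the natural rank-one slices of $D^{\mathrm{I}}_{p,q}$. For each unit vector $v\in\mathbb C^q$, the set $L_v:=\{\xi v^T:\xi\in\mathbb C^p,\,\|\xi\|<1\}$ is a holomorphically and properly embedded copy of the unit ball $\mathbb B^p$ inside $D^{\mathrm{I}}_{p,q}$, and $F|_{L_v}$ is a proper holomorphic map into $D^{\mathrm{I}}_{p',q'}$. Under the hypothesis $q'<2q-1$ (or the alternative $p'<2p-1$), the target has rank only slightly larger than the source along these slices, and known rigidity results for proper holomorphic maps of complex balls into type-$\mathrm{I}$ domains of small rank increase, together with Tsai's total geodesy \cite{Tsai:1993} applied to appropriate characteristic subvarieties, should force $F|_{L_v}$ to land inside a standardly embedded ball $\mathbb B^{p}\hookrightarrow D^{\mathrm{I}}_{p',q'}$. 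A symmetric analysis for the transverse rank-one family indexed by unit $u\in\mathbb C^p$ then pins down a candidate leaf of the target fibration containing $F(L_v)$, and produces the inequalities $p'\ge p$ and $q'\ge q$ from a dimension count.

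The main obstacle lies in promoting these slicewise constraints to the global fibral-image-preserving statement: the images $\{F(L_v)\}_{v}$ must form a holomorphic family of leaves of the corresponding natural fibration on $D^{\mathrm{I}}_{p',q'}$, not merely an ad hoc collection of subvarieties. Unlike in Kim--Zaitsev \cite{Kim_Zaitsev:2015} and Kim \cite{Kim:2015}, we cannot use boundary smoothness to control $F$ near $\partial D^{\mathrm{I}}_{p,q}$, so the argument must be purely interior. The intended approach is to work with the holomorphic double fibration associated to the incidence variety between $D^{\mathrm{I}}_{p,q}$ and the Grassmannian parametrizing rank-one slices, show that $F$ descends to a holomorphic map between the fibration bases, and use the sharp dimension bound $q'<\min\{2q-1,p\}$ to rule out twisting or degeneration of the induced map. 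Once fibral-image-preservation of $F$ is established, Ng's structure theorem for such maps together with the explicit form of $\mu_{p,q;p',q'}$ immediately yields a holomorphic $h:D^{\mathrm{I}}_{p,q}\to D^{\mathrm{I}}_{p'-p,q'-q}$ and automorphisms $\psi,\Phi$ realizing the decomposition $F=\Phi\circ G_h\circ\psi$, completing the proof.
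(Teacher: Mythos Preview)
Your proposal has a genuine gap at the first step, and the overall route diverges substantially from what the paper actually does.

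\textbf{The rank-one slice idea does not work.} You restrict $F$ to the rank-one slices $L_v\cong\mathbb B^p$ and assert that ``known rigidity results for proper holomorphic maps of complex balls into type-$\mathrm{I}$ domains of small rank increase'' force $F|_{L_v}$ into a standardly embedded ball. No such rigidity is available: $\mathbb B^p$ has rank $1$, the target $D^{\mathrm{I}}_{p',q'}$ has rank $\min\{p',q'\}\ge q\ge 2$, and proper holomorphic maps from a ball into a higher-rank bounded symmetric domain are highly flexible. Tsai's theorem \cite{Tsai:1993} requires equal rank $\ge 2$ on both sides, so it says nothing about $F|_{L_v}$. Without this step the rest of your outline has no foundation.

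\textbf{What the paper actually does (and only for Case~(2)).} First, note that the paper proves only the hypothesis $q'<2q-1$; Case~(1) with $p'<2p-1$ remains open (cf.\ the remark after Proposition~\ref{pro:general_diag_type}). The paper never slices by rank-one balls. Instead it uses the maximal invariantly geodesic $(p,q-1)$-subspaces $\Pi\cong D^{\mathrm{I}}_{p,q-1}$: via Proposition~\ref{Prop:MCS_to_MCS} and Theorem~\ref{Thm:thm5.3_Ng15} one shows (Proposition~\ref{prop:sp_to_sp1}) that $F$ sends each such $\Pi$ into a $(p',q'-1)$-subspace of the target, which is exactly the fibral-image-preserving property for the dual map $f^\dagger:D^{\mathrm{I}}_{q,p}\to D^{\mathrm{I}}_{q',p'}$. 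The heart of the argument is then \emph{not} an abstract ``Ng structure theorem'' but three concrete steps: (i) produce a \emph{global} meromorphic moduli map $g:D_{q,p}\dashrightarrow D_{q',p'}$ (Proposition~\ref{prop:exist_global_moduli_map}); (ii) prove $g$ is a \emph{rational proper} map (Proposition~\ref{Pro:Pro3.9_NTY}, using $p>q'\ge 3$); (iii) prove $g$ is \emph{linear} by invoking the hyperquadric rigidity of Baouendi--Ebenfelt--Huang (Theorem~\ref{thm:BEH11_MainThm}) together with a matrix computation showing $\mathbf I_{s_0}-W_{22}$ is invertible (Proposition~\ref{pro:rat_proper_FIP_2}). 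Only after $g$ is known to be the standard linear embedding does Lemma~\ref{lem:Ch_di_type} yield the diagonal form of $F$. Your outline skips steps (i)--(iii) entirely, and these are where the hypothesis $q'<\min\{2q-1,p\}$ is actually consumed.
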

\begin{remark}\text{}
\begin{enumerate}
\item In Kim-Zaitsev \cite[Corollary 1]{Kim_Zaitsev:2015} and Kim \cite[Theorem 1.2]{Kim:2015}, Kim and Zaitsev have solved Conjecture \ref{Conj1} under the additional assumption that the map $F$ extends smoothly to a neighborhood of a smooth boundary point of $D^{\mathrm{I}}_{p,q}$.
\item The condition $p'<2p-1$ or $q'<2q-1$ is actually necessary. Note that for any integers $p,q,p'$ and $q'$, we have precisely two mutually exclusive cases
\begin{enumerate}
\item[(a)] $p'<2p-1$ or $q'<2q-1$,
\item[(b)] $p'\ge 2p-1$ and $q'\ge 2q-1$.
\end{enumerate}
For Case {\rm(b)}, we put $p'=2p-1$ and $q'=2q$, Seo \cite[Theorem 1.2]{Seo:2016} has constructed a family of inequivalent proper holomorphic maps from $D^{\mathrm{I}}_{p,q}$ to $D^{\mathrm{I}}_{2p-1,2q}$ for $p\ge 2$, $q\ge 2$, which are not of diagonal type. Actually, it follows from Seo \cite{Seo:2016} that if $p,q,p'$ and $q'$ are positive integers such that $p\ge 2$, $q\ge 2$, $p'\ge 2p-1$ and $q'\ge 2q-1$, then there exists a proper holomorphic map from $D^{\mathrm{I}}_{p,q}$ to $D^{\mathrm{I}}_{p',q'}$ which is not of diagonal type.
\end{enumerate}
\end{remark}

Our main result is a solution to Case (2) of Conjecture \ref{Conj1}, as follows.

\begin{theorem}[Main Theorem]\label{thm:MT}
Let $f:D^{\mathrm{I}}_{p,q}\to D^{\mathrm{I}}_{p',q'}$ be a proper holomorphic map, where $p,q,p'$ and $q'$ are positive integers such that $p\ge q\ge 2$ and $q'<\min\{2q-1,p\}$.
Then, we have $p'\ge p$ and $q'\ge q$.
Moreover, $f$ is of diagonal type.
\end{theorem}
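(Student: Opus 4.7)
The plan is to use Ng \cite{Ng:2015}'s machinery of holomorphic double fibrations, fibral-image-preserving maps, and their local moduli maps. The argument splits into two parts: first, deriving the numerical bounds $p'\ge p$ and $q'\ge q$; second, deducing that $f$ is of diagonal type.

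For the numerical bounds, I would combine existing dimension/rank restrictions on proper holomorphic maps between type-$\mathrm{I}$ domains with the hypothesis $q'<\min\{2q-1,p\}$. Specifically, $q'\ge q$ should follow from a rank-comparison argument (cf. Tsai \cite{Tsai:1993}, Tu \cite{Tu:2002}): if $q'<q$, the target would have strictly smaller rank than the source, which, combined with Tsai's total-geodesy theorem and the known classification for lower-rank targets, yields a contradiction. The inequality $p'\ge p$ is then obtained by restricting $f$ to a maximal characteristic complex ball $\mathbb B^{p}\cong D^{\mathrm{I}}_{p,1}\subset D^{\mathrm{I}}_{p,q}$ and noting that, under $q'<p$, the image must sit inside a $p'$-dimensional characteristic ball of $D^{\mathrm{I}}_{p',q'}$; a dimension count then forces $p'\ge p$.

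For the structural statement, the plan is to proceed in four steps: (i) consider the canonical foliation of $D^{\mathrm{I}}_{p,q}$ by its family of maximal characteristic complex balls $D^{\mathrm{I}}_{p,1}$, parametrised by a Grassmannian-type moduli space; (ii) using Ng's fibral-image-preserving analysis, show that under $q'<\min\{2q-1,p\}$ the map $f$ sends each leaf into a characteristic subdomain of $D^{\mathrm{I}}_{p',q'}$ of matching type, producing a well-defined local moduli map $\mu_{f}$; (iii) by a dimension count in the target moduli space, relying crucially on the strict bound $q'<2q-1$, force $\mu_{f}$ to be constant modulo $\mathrm{Aut}(D^{\mathrm{I}}_{p',q'})$; (iv) conclude that all leaf-images lie in a single common embedded copy of $D^{\mathrm{I}}_{p,q'}$ inside $D^{\mathrm{I}}_{p',q'}$, read off the resulting block-matrix decomposition of $f$, and absorb the remaining freedom into automorphisms $\psi\in\mathrm{Aut}(D^{\mathrm{I}}_{p,q})$ and $\Phi\in\mathrm{Aut}(D^{\mathrm{I}}_{p',q'})$ to bring $f$ into the form $\Phi\circ G_{h}\circ\psi$.

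The main obstacle is step (iii): showing that the moduli map must be constant under the full hypothesis $q'<2q-1$. The condition is sharp in view of Seo \cite{Seo:2016}'s non-diagonal construction at $q'=2q$, so any successful argument must exploit $q'<2q-1$ in an essential way. I expect the decisive input to be a careful dimension-and-degree estimate in the spirit of Kim--Zaitsev \cite{Kim_Zaitsev:2015} and Kim \cite{Kim:2015}, but executed in the interior of the domain rather than at a smooth boundary point, thereby eliminating their boundary-smoothness assumption---the main new ingredient advertised by the paper.
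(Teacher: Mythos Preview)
Your plan has the right overall shape---pass to a moduli map via Ng's double fibrations and analyze it---but several of the concrete choices would not work.

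First, the leaves. The paper does \emph{not} foliate by rank-$1$ balls $D^{\mathrm{I}}_{p,1}$; it uses the maximal invariantly geodesic $(p,q-1)$-subspaces (equivalently, after passing to $f^\dagger:D^{\mathrm{I}}_{q,p}\to D^{\mathrm{I}}_{q',p'}$, the $(q-1,p)$-subspaces). Their moduli space is the generalized complex ball $D_{q,p}\subset\mathbb P^{p+q-1}$, and the local moduli map of $f^\dagger$ is a map $g:U\subset D_{q,p}\to D_{q',p'}$. With rank-$1$ leaves you would not land in this picture, and more importantly the needed structure theorems (below) would be unavailable.

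Second, step~(iii) aims at the wrong conclusion. For a diagonal-type map $G_h$, the moduli map is \emph{not} constant: it is the standard linear embedding $[A;B]_q\mapsto[A,\mathbf 0;B,\mathbf 0]_{q'}$ (Lemma~\ref{lem:Ch_di_type}). Consequently step~(iv) is also off: the image of a genuine $G_h$ does not lie in a single $D^{\mathrm{I}}_{p,q'}$ unless $h$ is constant. What must be proved is that $g$ is \emph{linear} (degree~$1$), not constant. The paper achieves this by first showing that $g$ extends to a \emph{rational proper} map $D_{q,p}\dashrightarrow D_{q',p'}$ (Propositions~\ref{Pro:Pro3.9_NTY} and \ref{prop:exist_global_moduli_map}; properness uses $p>q'$ in a nontrivial way), then invoking the Baouendi--Ebenfelt--Huang classification of rational proper maps between hyperquadrics with small signature difference (Theorem~\ref{thm:BEH11_MainThm}), which is exactly where $q'<2q-1$ enters. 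A further fibral-image argument (Proposition~\ref{pro:rat_proper_FIP_2}) forces $\deg g=1$. A bare ``dimension count in the target moduli space'' will not produce this; the BEH input is essential and is the replacement for the Kim--Zaitsev boundary analysis.

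Finally, your argument for $p'\ge p$ has a gap: the restriction of $f$ to an embedded $\mathbb B^{p}$ is merely holomorphic, not proper, and there is no a~priori reason its image lies in a characteristic ball of the target. The paper instead assumes $p'<p$, composes with a standard embedding $D^{\mathrm{I}}_{p',q'}\hookrightarrow D^{\mathrm{I}}_{p,q'}$, and applies Ng's rigidity theorem (using $q'\le\min\{2q-1,p\}$) to force $f$ standard, which then contradicts $p'<p$ (Proposition~\ref{prop:sp_to_sp1}).
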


In the proof of Theorem \ref{thm:MT}, the first step is to prove that such a proper holomorphic map $f:D^{\mathrm{I}}_{p,q}\to D^{\mathrm{I}}_{p',q'}$ maps every maximal invariantly geodesic subspace $\Pi\cong D^{\mathrm{I}}_{p,q-1}$ of $D^{\mathrm{I}}_{p,q}$ into some maximal invariantly geodesic subspace $\Pi'\cong D^{\mathrm{I}}_{p',q'-1}$ of $D^{\mathrm{I}}_{p',q'}$. This can be done by making use of the results of Ng \cite{Ng:2015} with slight modification.
We note that Ng \cite[Section 2.4]{Ng:2015} has obtained the explicit parameter space $D^l_{q',p'}$ of the moduli space of all invariantly geodesic subspaces biholomorphic to $ D^{\mathrm{I}}_{p',q'-l}$ in $D^{\mathrm{I}}_{p',q'}$, where $1\le l\le q'-1$.
One of the key ingredients in our proof of Theorem \ref{thm:MT} is to show that $f$ actually has a global meromorphic moduli map $g$ from $D_{q,p}$ into $D_{q',p'}$ by making use of the double fibration for $D^l_{q',p'}$ (see \,\cite[p.\,11]{Ng:2015}).
Another major result is to obtain the rigidity of such kind of meromorphic maps from $D_{q,p}$ into $D_{q',p'}$ when $p\ge q\ge 2$ and $q'<\min\{2q-1,p\}$.
We could then deduce that $f$ is of diagonal type by using the rigidity of its global meromorphic moduli map $g$.

\section{Preliminaries}
We first recall some results of Mok-Tsai \cite{Mok_Tsai:1992} and Tsai \cite{Tsai:1993} on proper holomorphic maps between bounded symmetric domains.
\begin{proposition}[cf.\,Mok-Tsai \cite{Mok_Tsai:1992} and Tsai \cite{Tsai:1993}] \label{Prop:rk_nondecrease1}
Let $F:D\to \Omega$ be a proper holomorphic map between bounded symmetric domains $D$ and $\Omega$. Then, we have $\mathrm{rank}(D)$ $\le$ $\mathrm{rank}(\Omega)$.
\end{proposition}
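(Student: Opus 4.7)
The plan is to derive the rank inequality from the boundary behavior of $F$, combined with the characterization of rank via the Shilov boundary. Set $r:=\mathrm{rank}(D)$ and $r':=\mathrm{rank}(\Omega)$. First I would invoke the Polysphere Theorem (Wolf) to obtain a maximal totally geodesic polydisc embedding $\iota:\Delta^r\hookrightarrow D$, whose distinguished (Shilov) boundary is the real torus $T^r\subset\partial\Delta^r$, and such that $\iota(T^r)$ lies in the Shilov boundary $\partial_{\mathrm{sh}}D$. Then I would study the bounded holomorphic map $g:=F\circ\iota:\Delta^r\to\Omega$: by Fatou's theorem it admits radial limits $g^{\ast}$ almost everywhere on $T^r$, and the properness of $F$ together with the inclusion $\iota(T^r)\subset\partial D$ forces $g^{\ast}$ to take values in $\partial\Omega$ almost everywhere.

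Next I would upgrade this to $g^{\ast}(T^r)\subseteq\partial_{\mathrm{sh}}\Omega$. The Shilov boundary of $\Omega$ is intrinsically characterized as the unique closed $\mathrm{Aut}(\Omega)$-orbit of maximal tripotents in $\overline{\Omega}$, equivalently, as the minimal closed subset on which every function in $A(\Omega)$ attains its maximum modulus. Since the coordinate functions on $\Delta^r$ attain their sup-norms exactly on $T^r$, while pullbacks under $F$ of functions in $A(\Omega)$ are bounded and holomorphic on $D$, the cluster values $g^{\ast}$ are forced into $\partial_{\mathrm{sh}}\Omega$, producing an almost-everywhere defined measurable map $\varphi:T^r\to\partial_{\mathrm{sh}}\Omega$. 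The final step is a dimension/rank comparison: $\partial_{\mathrm{sh}}\Omega$ is a homogeneous CR-manifold foliated by maximal real tori of dimension $r'$ coming from a Cartan of the maximal compact subgroup, so a non-degenerate image of $T^r$ inside such a foliated space forces $r\le r'$.

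The main obstacle is the non-degeneracy of $\varphi$, i.e., ruling out that the Fatou limit collapses $T^r$ to a set of real dimension strictly less than $r$. This is the delicate analytic content supplied by Mok--Tsai: using Hartogs-type figures attached to boundary components, together with the tripotent rank filtration of $\partial\Omega$, one shows that $\varphi$ must respect (a.e.)\ the tripotent stratification, which prevents collapse. Once collapse is excluded, the inequality $\mathrm{rank}(D)\le\mathrm{rank}(\Omega)$ follows from the torus-dimension computation on $\partial_{\mathrm{sh}}\Omega$. The case of reducible bounded symmetric $D$ or $\Omega$ reduces to the irreducible case by projecting onto irreducible factors and summing ranks.
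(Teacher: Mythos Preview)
The paper itself does not prove this proposition; it is simply quoted from Mok--Tsai and Tsai. So the comparison is with the argument in those references, whose core is the \emph{rank filtration of the boundary}: $\partial\Omega=\bigsqcup_{k=1}^{r'}E_k(\Omega)$, where $E_k$ is the union of rank-$k$ boundary components and $E_{r'}=\partial_{\mathrm{sh}}\Omega$. One shows, via Fatou limits along maximal polydisks and an inductive use of the fact that each boundary component is itself a bounded symmetric domain of lower rank, that $F$ takes (a.e.) rank-$k$ boundary points of $D$ to rank-$\ge k$ boundary points of $\Omega$. Applied to $k=r$ this forces $E_r(\Omega)\ne\varnothing$, i.e.\ $r\le r'$.

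Your outline is in the right spirit up to the point where Fatou gives $g^{\ast}:T^r\to\partial\Omega$ a.e., but two steps do not stand as written. First, your justification for $g^{\ast}(T^r)\subset\partial_{\mathrm{sh}}\Omega$ is not an argument: that coordinate functions on $\Delta^r$ peak on $T^r$, and that pullbacks $F^{\ast}h$ for $h\in A(\Omega)$ are bounded holomorphic, gives no information about \emph{which} stratum of $\partial\Omega$ receives $g^{\ast}$. This step is exactly the substantive content of Mok--Tsai and needs the boundary-component structure (each $E_k$ with $k<r'$ is foliated by positive-dimensional complex manifolds, and a Fatou limit landing there on a set of positive measure propagates by uniqueness theorems to a contradiction).

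Second, and more seriously, your final ``torus foliation'' step is incorrect. The Shilov boundary $\partial_{\mathrm{sh}}\Omega$ is a homogeneous space $K/L$; it is covered by $K$-translates of the distinguished torus $T^{r'}$ of a maximal polydisk, but it is not \emph{foliated} by $r'$-tori, and in any case a non-degenerate map $T^r\to\partial_{\mathrm{sh}}\Omega$ does \emph{not} force $r\le r'$. Concretely, take $\Omega=\mathbb{B}^n$: then $\partial_{\mathrm{sh}}\Omega=S^{2n-1}$ and $\mathrm{rank}(\Omega)=1$, yet the Clifford torus gives a smooth embedding $T^n\hookrightarrow S^{2n-1}$ with $n$ arbitrarily large. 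So no purely topological or measurable ``torus dimension'' comparison on the Shilov boundary can yield the rank inequality; one must use the holomorphic input encoded in the rank filtration, as in Mok--Tsai.
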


From Tsai \cite{Tsai:1993}, invariantly geodesic subspaces of $D^{\mathrm{I}}_{p,q}$ are equivalent to images of the standard embeddings from $D^{\mathrm{I}}_{r,s}$ to $D^{\mathrm{I}}_{p,q}$ given by $Z\mapsto \begin{bmatrix} {\bf 0} & {\bf 0} \\ {\bf 0} & Z\end{bmatrix}$ for some integers $1\le r< p$ and $1\le s< q$. As in Ng \cite{Ng:2015}, we call such a subspace a $(r,s)$-subspace of $D^{\mathrm{I}}_{p,q}$.

\begin{proposition}[cf.\,$\text{Ng \cite[Proposition 1.1]{Ng:2015}}$] \label{Prop:MCS_to_MCS}
Let $F:D^{\mathrm{I}}_{p,q}\to D^{\mathrm{I}}_{p',q'}$ be a proper holomorphic map, where $\mathrm{rank}(D^{\mathrm{I}}_{p,q})=\min\{p,q\}\ge 2$. Then, $F$ maps every $(p-1,q-1)$-subspace of $D^{\mathrm{I}}_{p,q}$ into a $(p'-1,q'-1)$-subspace of $D^{\mathrm{I}}_{p',q'}$.
\end{proposition}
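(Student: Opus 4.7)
My plan is to extend $F$ smoothly across the regular boundary stratum of $D^{\mathrm{I}}_{p,q}$, use that the extension sends the regular stratum into that of $D^{\mathrm{I}}_{p',q'}$, and then exploit the fibration of this target stratum over the moduli space of $(p'-1,q'-1)$-subspaces to locate a single target subspace containing $F(\Pi)$. After composing with automorphisms I may assume $\Pi$ is the standard $(p-1,q-1)$-subspace. Recall that $\partial D^{\mathrm{I}}_{p,q}$ decomposes into $\mathrm{Aut}$-orbits indexed by the corank $k=\mathrm{rank}(I_q-\overline Z^T Z)\in\{0,\dots,q-1\}$, with $k=0$ the Shilov boundary and $k=q-1$ the smooth (regular) stratum $\partial^* D^{\mathrm{I}}_{p,q}$; a direct block computation with $Z=\mathrm{diag}(\mathbf 0, W)$, $W\in\overline{D^{\mathrm{I}}_{p-1,q-1}}$, shows that the regular boundary of $\Pi$ embeds into $\partial^* D^{\mathrm{I}}_{p,q}$.

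The first step is boundary extension. Applying Bell's extension theorem on the smooth locus (refined by Mok--Tsai for bounded symmetric domains), $F$ extends smoothly across $\partial^* D^{\mathrm{I}}_{p,q}$. Properness forces the extended image of a regular boundary point to lie in $\partial D^{\mathrm{I}}_{p',q'}$, and combining Proposition \ref{Prop:rk_nondecrease1} with the fact that a proper holomorphic map cannot generically collapse the regular stratum into a strictly deeper stratum (it would then fail to be locally finite near the collapse), the extension sends $\partial^* D^{\mathrm{I}}_{p,q}$ into $\partial^* D^{\mathrm{I}}_{p',q'}$. In particular $F(\partial^*\Pi)\subseteq\partial^* D^{\mathrm{I}}_{p',q'}$.

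The second step uses the universal family of $(p'-1,q'-1)$-subspaces: the regular stratum $\partial^* D^{\mathrm{I}}_{p',q'}$ admits a natural submersion onto the moduli space $D_{q',p'}$ recalled in the introduction, whose fibres are the regular boundaries of the individual $(p'-1,q'-1)$-subspaces. Since $\partial^*\Pi$ is connected, $F(\partial^*\Pi)$ lies in a single fibre, i.e.\ in the regular boundary of a uniquely determined $(p'-1,q'-1)$-subspace $\Pi'$. The identity principle applied in holomorphic coordinates transverse to $\Pi'$ then upgrades this boundary containment to the desired $F(\Pi)\subseteq\Pi'$.

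The main obstacle is establishing the fibrewise description precisely enough to accommodate the boundary extension; this is exactly the geometric content of Ng's double fibration for $(p'-1,q'-1)$-subspaces recalled in the introduction, and appealing to it is what makes this proposition a consequence of existing machinery rather than an elementary computation.
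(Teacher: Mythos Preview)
Your overall strategy---extend across the regular boundary and exploit a foliation there---is in the spirit of the Mok--Tsai/Tsai approach that Ng's Proposition~1.1 rests on, but the second step contains a genuine gap. The ``fibration'' you describe does not exist as stated. First, $D_{q',p'}$ in this paper parametrizes $(p',q'-1)$-subspaces (see the double fibration~(\ref{Eq:HDF1}) and the surrounding discussion), not $(p'-1,q'-1)$-subspaces; the moduli of the latter is a different object. Second, and more seriously, the regular boundaries $\partial^*\Pi'$ of distinct $(p'-1,q'-1)$-subspaces $\Pi'$ are \emph{not} disjoint, so they cannot be the fibres of any submersion on $\partial^* D^{\mathrm{I}}_{p',q'}$. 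What \emph{does} foliate $\partial^* D^{\mathrm{I}}_{p',q'}$ is the Levi-null (holomorphic arc component) foliation, whose leaves are the rank-one boundary components---open copies of $D^{\mathrm{I}}_{p'-1,q'-1}$ sitting \emph{inside} the boundary, not regular boundaries of interior subspaces.

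Even after correcting the foliation, your inference ``$\partial^*\Pi$ is connected $\Rightarrow$ $F(\partial^*\Pi)$ lies in a single leaf'' still fails: $\partial^*\Pi$ is a real hypersurface of $\overline{\Pi}$ and meets \emph{many} boundary components of $D^{\mathrm{I}}_{p,q}$, so there is no reason for its image to sit in one leaf on the target side. The actual argument (Tsai, then Ng) runs differently: one first shows $F$ maps each boundary component of $\partial^* D^{\mathrm{I}}_{p,q}$ into a boundary component of $\partial^* D^{\mathrm{I}}_{p',q'}$ (this uses that the smooth CR extension preserves the Levi-null foliation), and then uses that every interior $(p-1,q-1)$-subspace $\Pi$ is the limit of a one-parameter family of boundary components (they form a single $\mathrm{Aut}$-family together with $\Pi$). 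Continuity/analyticity of the induced map on this family then forces $F(\Pi)$ into a $(p'-1,q'-1)$-subspace. Your write-up skips precisely this deformation from boundary components back to interior characteristic subspaces, which is the substantive content of the proposition.
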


Let $f:U \to M(r,s;\mathbb C)$ be a map, where $U\subset M(p,q;\mathbb C)$ is an open subset and $p,q$ are positive integers. Then, we let $f^T:U\to M(s,r;\mathbb C)$ be the map defined by
\[ f^T(Z):=(f(Z))^T\quad \forall\; Z\in U \subset M(p,q;\mathbb C). \]
For any subset $S\subset M(p,q;\mathbb C)$, we let $S^\dagger:=\{W\in M(q,p;\mathbb C): W^T \in S\}$. It is clear that $(S^\dagger)^\dagger = S$. We also define a map $f^\dagger: U^\dagger \subset M(q,p;\mathbb C) \to M(s,r;\mathbb C)$ by
\[ f^\dagger(W):= f^T(W^T)=(f(W^T))^T\quad \forall\;W\in U^\dagger. \]
We remark here that $(D^{\mathrm{I}}_{p,q})^\dagger= D^{\mathrm{I}}_{q,p}$ and $(f^\dagger)^\dagger=f$. Thus, any holomorphic map $F$ $:$ $D^{\mathrm{I}}_{p,q}$ $\to$ $ D^{\mathrm{I}}_{p',q'}$ induces a holomorphic map $F^\dagger: D^{\mathrm{I}}_{q,p}\to D^{\mathrm{I}}_{q',p'}$ given by $F^\dagger(W):=(F(W^T))^T$ for $W\in D^{\mathrm{I}}_{q,p}$. We have the following basic lemma.

\begin{lemma}\label{lem:Trans1}
Let $f_1:D^{\mathrm{I}}_{p,q}\to D^{\mathrm{I}}_{p',q'}$ and $f_2:D^{\mathrm{I}}_{p',q'}\to D^{\mathrm{I}}_{r,s}$ be holomorphic maps. Then, we have $(f_2\circ f_1)^\dagger = f_2^\dagger \circ f_1^\dagger$.
\end{lemma}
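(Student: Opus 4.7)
My plan is to prove this by direct unwinding of the definitions of $(\,\cdot\,)^\dagger$ and verifying equality pointwise. The statement is essentially a bookkeeping identity, so the main task is to track domains, codomains, and transposes carefully rather than to develop any nontrivial idea.

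First I would fix an arbitrary $W \in D^{\mathrm{I}}_{s,r} = (D^{\mathrm{I}}_{r,s})^\dagger$, which is the natural domain of $(f_2 \circ f_1)^\dagger$. Applying the definition once, $(f_2 \circ f_1)^\dagger(W) = \bigl((f_2 \circ f_1)(W^T)\bigr)^T = \bigl(f_2(f_1(W^T))\bigr)^T$. For the right-hand side, I would first compute $f_1^\dagger(W) = (f_1(W^T))^T \in D^{\mathrm{I}}_{q',p'}$, which is exactly the domain of $f_2^\dagger$; this is the point where one must check that the chain of transposes is consistent with the fact that $f_1^\dagger$ lands in $(D^{\mathrm{I}}_{p',q'})^\dagger$. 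Then $f_2^\dagger(f_1^\dagger(W)) = \bigl(f_2((f_1^\dagger(W))^T)\bigr)^T = \bigl(f_2(((f_1(W^T))^T)^T)\bigr)^T = \bigl(f_2(f_1(W^T))\bigr)^T$, using $(A^T)^T = A$ on ordinary matrices. Comparing the two expressions gives the claimed equality for every $W$.

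The only thing remotely resembling an obstacle is making sure the domains match up: one needs $(D^{\mathrm{I}}_{p',q'})^\dagger = D^{\mathrm{I}}_{q',p'}$ and $(D^{\mathrm{I}}_{p,q})^\dagger = D^{\mathrm{I}}_{q,p}$, both of which are noted explicitly in the paragraph preceding the lemma, together with the involutivity $(S^\dagger)^\dagger = S$ and $(f^\dagger)^\dagger = f$. Once these are in place, the computation above is mechanical, so I would simply present it as a short two-line display and conclude.
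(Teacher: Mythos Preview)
Your approach is the same as the paper's: a direct pointwise unwinding of the definition of $(\cdot)^\dagger$, using $(A^T)^T=A$. One slip to fix: the domain of $(f_2\circ f_1)^\dagger$ is $(D^{\mathrm{I}}_{p,q})^\dagger = D^{\mathrm{I}}_{q,p}$, not $D^{\mathrm{I}}_{s,r}$ (the latter is the codomain); with $W\in D^{\mathrm{I}}_{q,p}$ your computation is valid and coincides with the paper's line-for-line.
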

\begin{proof}
For any $W\in D^{\mathrm{I}}_{q,p}$, we have
\[\begin{split}
(f_2\circ f_1)^\dagger (W)
=& ((f_2\circ f_1)(W^T))^T
= (f_2(f_1(W^T)))^T\\
=& \big(f_2 \big( (f_1^\dagger(W))^T \big) \big)^T
= f_2^\dagger (f_1^\dagger(W))=(f_2^\dagger\circ f_1^\dagger)(W),
\end{split}\]
i.e., $(f_2\circ f_1)^\dagger = f_2^\dagger\circ f_1^\dagger$.  
\end{proof}

\section{Non-existence of proper holomorphic maps revisited}
In the study of proper holomorphic maps between type-$\mathrm{I}$ irreducible bounded symmetric domains, we have the following natural question.
\begin{problem}
Let $p,q,p'$ and $q'$ be integers such that $2 \le q <  q'\le p' <p$. Is there a proper holomorphic map from $D^{\mathrm{I}}_{p,q}$ to $D^{\mathrm{I}}_{p',q'}$?
{\rm(}Noting that $\mathrm{rank}(D^{\mathrm{I}}_{p,q})$ $=$ $q$ $<$ $q'$ $=$ $\mathrm{rank}(D^{\mathrm{I}}_{p',q'})$.{\rm)}
\end{problem}
\begin{remark}
If $q=q'$, $2\le q=q'\le p'$ and there is a proper holomorphic map $F: D^{\mathrm{I}}_{p,q}\to D^{\mathrm{I}}_{p',q}$, then $F$ is totally geodesic by Tsai \cite{Tsai:1993}. But $F$ induces a totally geodesic embedding from $\mathbb B^{p}$ to $D^{\mathrm{I}}_{p',q}$, which implies that $p\le \max\{p',q\}=p'$. In other words, there does not exist a proper holomorphic map from $D^{\mathrm{I}}_{p,q}$ to $D^{\mathrm{I}}_{p',q'}$ if $2 \le q = q'\le p' <p$.
\end{remark}

Now, we suppose $2\le q<q'\le p'<p$. It is clear that $q \le p-2$. 
We also observe that if $q=p-2$, then we have $q'=p'=p-1$ from the setting so that the question is about the existence of a proper holomorphic map from $D^I_{p,p-2}$ to $D^I_{p-1,p-1}$, where $p\ge 4$.
But this has been solved by Tu \cite[Corollary 1.2]{Tu:2002}, i.e., there does not exist a proper holomorphic map from $D^I_{p,p-2}$ to $D^I_{p-1,p-1}$ when $p\ge 4$.
Now, we only need to consider the case of $q<p-2$ so that $p\ge 5$, and we may assume that $q<p-2$ from now on.

Write $p':=p-l$ and $q':=p-k-l$ for some integers $k\ge 0$ and $l>0$. Here, we have $l:=p-p'$ and $k:=p'-q'$. If $q\ge p-k-2l$, then we have a standard embedding $D^{\mathrm{I}}_{p,p-k-2l} \hookrightarrow D^{\mathrm{I}}_{p,q}$. This induces a proper holomorphic map from $D^{\mathrm{I}}_{p,p-k-2l}$ to $D^{\mathrm{I}}_{p-l,p-k-l}$. But then from Mok \cite[Theorem 3.1]{Mok:2008}, there is a positive integer $N'(k,l)$ such that for $p\ge N'(k,l)$, there does not exist any proper holomorphic map from $D^{\mathrm{I}}_{p,p-k-2l}$ to $D^{\mathrm{I}}_{p-l,p-k-l}$. Hence, we have
\begin{proposition}[cf.\,Mok \cite{Mok:2008}]\label{Pro:NonEx_PHM1}
Let $p$ and $q$ be integers such that $p\ge q \ge 2$. Let $p'$ and $q'$ be positive integers such that $q < q'\le p' <p$. Then, there is a positive integer $N'$ such that for $p\ge N'$ and $q\ge p'+q'-p$, there does not exist any proper holomorphic map from $D^{\mathrm{I}}_{p,q}$ to $D^{\mathrm{I}}_{p',q'}$.
\end{proposition}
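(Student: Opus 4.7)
The plan is to execute the reduction sketched in the paragraph preceding the statement: compose a proper holomorphic map with a totally geodesic embedding of a smaller type-$\mathrm{I}$ domain and then invoke Mok's nonexistence result from \cite[Theorem 3.1]{Mok:2008}.

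First I would reparametrize by writing $l := p - p' > 0$ and $k := p' - q' \ge 0$, so that $p' = p - l$ and $q' = p - k - l$. Under this substitution, the hypothesis $q \ge p' + q' - p$ becomes $q \ge p - k - 2l$, so that $p - k - 2l$ is a positive integer not exceeding $q$ (the degenerate case $p - k - 2l \le 0$ may be absorbed into $N'$ as vacuous). Consequently the standard totally geodesic embedding
\[
\iota : D^{\mathrm{I}}_{p,\,p - k - 2l} \hookrightarrow D^{\mathrm{I}}_{p,q}, \qquad Z \longmapsto \begin{bmatrix} Z & {\bf 0} \end{bmatrix},
\]
is well defined, has closed image in $D^{\mathrm{I}}_{p,q}$, and is therefore proper.

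Next, suppose for contradiction that there exists a proper holomorphic map $F : D^{\mathrm{I}}_{p,q} \to D^{\mathrm{I}}_{p',q'}$. Then $F \circ \iota : D^{\mathrm{I}}_{p,\,p - k - 2l} \to D^{\mathrm{I}}_{p - l,\, p - k - l}$ is proper, since $Z_n \to \partial D^{\mathrm{I}}_{p,\,p - k - 2l}$ forces $\iota(Z_n) \to \partial D^{\mathrm{I}}_{p,q}$ by properness of $\iota$, and then $F(\iota(Z_n)) \to \partial D^{\mathrm{I}}_{p',q'}$ by properness of $F$. Now \cite[Theorem 3.1]{Mok:2008} furnishes a positive integer $N'(k,l)$ such that no proper holomorphic map from $D^{\mathrm{I}}_{p,\,p - k - 2l}$ into $D^{\mathrm{I}}_{p - l,\, p - k - l}$ exists whenever $p \ge N'(k,l)$; setting $N' := N'(k,l)$ contradicts the existence of $F \circ \iota$ and yields the proposition. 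The main obstacle is really just identifying the correct source-target pair for Mok's theorem and verifying that its numerical hypotheses on the ranks and codimensions are met by $(D^{\mathrm{I}}_{p,\,p - k - 2l},\, D^{\mathrm{I}}_{p - l,\, p - k - l})$; both the construction of $\iota$ and the properness of the composition are straightforward boundary-limit considerations.
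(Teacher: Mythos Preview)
Your proposal is correct and follows essentially the same argument as the paper: reparametrize via $l=p-p'$, $k=p'-q'$, use the standard embedding $D^{\mathrm{I}}_{p,p-k-2l}\hookrightarrow D^{\mathrm{I}}_{p,q}$ (available since $q\ge p-k-2l$), compose with the putative proper map to obtain a proper holomorphic map $D^{\mathrm{I}}_{p,p-k-2l}\to D^{\mathrm{I}}_{p-l,p-k-l}$, and contradict Mok \cite[Theorem~3.1]{Mok:2008} for $p\ge N'(k,l)$. The only addition you make over the paper is spelling out the boundary-limit verification that $F\circ\iota$ is proper, which is harmless.
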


On the other hand, from Ng \cite{Ng:2015} we can deduce the following. 
\begin{proposition}\label{Pro:NonEx_PHM2}
Let $p,q,p'$ and $q'$ be integers such that 
\[ 2 \le q \le \min\{q', p'\}\le \max\{q', p'\} <p.\]
If $2q-1 \ge \min\{q', p'\}$, then there does not exist any proper holomorphic map from $D^{\mathrm{I}}_{p,q}$ to $D^{\mathrm{I}}_{p',q'}$.
\end{proposition}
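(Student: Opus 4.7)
The plan is to argue by contradiction: suppose a proper holomorphic map $F: D^{\mathrm{I}}_{p,q} \to D^{\mathrm{I}}_{p',q'}$ exists under the stated hypotheses, and invoke known rigidity to force the target to contain a block isomorphic to $D^{\mathrm{I}}_{p,q}$, which is impossible since $\max\{p',q'\}<p$.

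First, I would normalize the target so that its first index is at least its second. Since the hypotheses are symmetric in $p'$ and $q'$, if $p'<q'$ I pass to $F^\dagger: D^{\mathrm{I}}_{q,p}\to D^{\mathrm{I}}_{q',p'}$ via Lemma~\ref{lem:Trans1} and the $\dagger$-biholomorphisms $D^{\mathrm{I}}_{r,s}\cong D^{\mathrm{I}}_{s,r}$, and then relabel. Setting $P:=\max\{p',q'\}$ and $Q:=\min\{p',q'\}$, this produces (possibly after $\dagger$) a proper holomorphic map $\tilde F: D^{\mathrm{I}}_{p,q}\to D^{\mathrm{I}}_{P,Q}$ satisfying $p\ge q\ge 2$, $P\ge Q\ge q$, $P<p$, and $Q\le 2q-1$.

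Next I would split into the rank-equality case and the strictly positive rank-difference case. If $Q=q$, then $\tilde F$ is totally geodesic by Tsai \cite{Tsai:1993}, hence equivalent to the standard embedding $D^{\mathrm{I}}_{p,q}\hookrightarrow D^{\mathrm{I}}_{P,q}$; this forces $P\ge p$, contradicting $P<p$ (this is exactly the argument given in the remark after the Problem above). If instead $q<Q\le 2q-1$, then $\tilde F$ has strictly positive rank difference $Q-q>0$, and the parameters place it in the regime of Ng's rigidity theorem in \cite{Ng:2015}. Applying that theorem yields that $\tilde F$ is of diagonal type, i.e.\ equivalent to a map of the form $G_h(Z)=\begin{bmatrix} Z & {\bf 0}\\ {\bf 0} & h(Z)\end{bmatrix}$ for some holomorphic $h:D^{\mathrm{I}}_{p,q}\to D^{\mathrm{I}}_{P-p,Q-q}$; the presence of the $p\times q$ diagonal block inside a $P\times Q$ matrix forces $P\ge p$, again contradicting $P<p$.

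The main obstacle is verifying that Ng's rigidity statement in \cite{Ng:2015} indeed covers the full range $q<Q\le 2q-1$, including the endpoint $Q=2q-1$, with only the weak target-orientation assumption $P\ge Q$; the exact citation has to be matched carefully to Ng's hypotheses (and, if necessary, the endpoint $Q=2q-1$ may need a separate small argument using the fibration/moduli-map machinery from \cite{Ng:2015} mentioned in the introduction of this paper). Once the appropriate form of Ng's theorem is in hand, the dagger normalization and the block-size count close out the argument directly.
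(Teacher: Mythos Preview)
Your overall strategy---assume a proper map exists, normalize the target so the smaller index is second, invoke Ng's rigidity, and derive a size contradiction---matches the paper's. But two steps in your execution are off, and one of them is the crux.

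First, a minor point: applying $\dagger$ swaps the indices on \emph{both} domain and target, so $F^\dagger$ lands you in $D^{\mathrm{I}}_{q,p}\to D^{\mathrm{I}}_{q',p'}$ with the wrong domain orientation. What you actually want is to compose $F$ with the transpose biholomorphism $D^{\mathrm{I}}_{p',q'}\cong D^{\mathrm{I}}_{q',p'}$ on the target side only; this is exactly what the paper does (``we may assume without loss of generality that $q'\le p'$'').

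The substantive gap is your appeal to Ng's theorem. The relevant result, \cite[Theorem~1.3]{Ng:2015}, applies to proper maps $D^{\mathrm{I}}_{p,q}\to D^{\mathrm{I}}_{p,q'}$ with \emph{equal first index} $p$ and $q'\le\min\{2q-1,p\}$, and its conclusion is that the map is \emph{standard} (not merely diagonal type). Your map $\tilde F:D^{\mathrm{I}}_{p,q}\to D^{\mathrm{I}}_{P,Q}$ has $P<p$, so the theorem does not apply directly---this is precisely the obstacle you flag, and it is not a cosmetic one. The paper's fix is a one-line trick you are missing: compose with the standard embedding $\iota:D^{\mathrm{I}}_{P,Q}\hookrightarrow D^{\mathrm{I}}_{p,Q}$ (which is available because $P<p$) to get $\iota\circ\tilde F:D^{\mathrm{I}}_{p,q}\to D^{\mathrm{I}}_{p,Q}$. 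Now the hypotheses of \cite[Theorem~1.3]{Ng:2015} are met (including the endpoint $Q=2q-1$, which the theorem covers), so $\iota\circ\tilde F$ is standard, hence $\tilde F$ is standard, hence $\tilde F$ induces a standard embedding $\mathbb B^p\hookrightarrow D^{\mathrm{I}}_{P,Q}$, forcing $p\le\max\{P,Q\}=P$, a contradiction. Note that this handles your two cases $Q=q$ and $q<Q\le 2q-1$ uniformly; the separate appeal to Tsai is unnecessary.
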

\begin{proof}
Assume the contrary that there is a proper holomorphic map $f:D^{\mathrm{I}}_{p,q}\to D^{\mathrm{I}}_{p',q'}$. Since $D^{\mathrm{I}}_{p',q'}\cong D^{\mathrm{I}}_{q',p'}$, we may assume without loss of generality that $q'\le p'$ so that $q'\le 2q-1$. This induces a proper holomorphic map $F:=\iota\circ f:D^{\mathrm{I}}_{p,q}\to D^{\mathrm{I}}_{p,q'}$, where $\iota:D^{\mathrm{I}}_{p',q'}\hookrightarrow D^{\mathrm{I}}_{p,q'}$ is a standard embedding. Since $q' \le \min\{2q-1,p\}$, it follows directly from Ng \cite[Theorem 1.3]{Ng:2015} that $F$ is standard. Thus, $f$ is also standard so that $f$ induces a standard embedding from $\mathbb B^{p}$ to $D^{\mathrm{I}}_{p',q'}$. But then this implies that $p\le \max\{p',q'\}$, a plain contradiction. Hence, there does not exist such a proper holomorphic map $f$.  
\end{proof}

\section{Holomorphic double fibrations}
\label{Sec:DF_FIPM}
In this section, we recall the construction of holomorphic double fibrations introduced by Ng \cite{Ng:2015} and its application to the study of proper holomorphic maps between type-$\mathrm{I}$ irreducible bounded symmetric domains. In particular, we will show that all proper holomorphic maps between certain type-$\mathrm{I}$ irreducible bounded symmetric domains are fibral-image-preserving maps with respect to such holomorphic double fibrations.

Let $G(p,q)$ be the complex Grassmannian of complex $p$-dimensional linear subspaces of $\mathbb C^{p+q}$. Then, we may identity $D^{\mathrm{I}}_{p,q}$ as an open subset of $G(p,q)$ as
\[ D^{\mathrm{I}}_{p,q} = \{ [{\bf I}_p,Z]_p\in G(p,q): Z\in D^{\mathrm{I}}_{p,q}\}. \]
Here, $[{\bf I}_p,Z]_p\in G(p,q)$ denotes the complex $p$-dimensional linear subspace of $\mathbb C^{p+q}$ spanned by the row vectors of the matrix $\begin{bmatrix}{\bf I}_p,Z \end{bmatrix}\in M(p,p+q;\mathbb C)$. We also identify $Z\in M(p,q;\mathbb C)$ with $[{\bf I}_p,Z]_p\in G(p,q)$ in this article. For positive integers $p$ and $q$, we consider the double fibration
\begin{equation}\label{Eq:HDF1}
D_{p,q} \xleftarrow{\pi_{p,q}^1} \mathbb P^{p-1}\times D^{\mathrm{I}}_{p,q}\xrightarrow{\pi_{p,q}^2} D^{\mathrm{I}}_{p,q},
\end{equation}
where $\pi_{p,q}^1([X],[{\bf I}_p,Z]_p):=[X,XZ]_p$ and $\pi_{p,q}^2$ is the canonical projection onto $D^{\mathrm{I}}_{p,q}$ (cf. \cite{Ng:2015}). For any $[A,B]_p\in D_{p,q}$ we define the fibral image of $[A,B]_p$ by 
\[ [A,B]_p^\sharp := \pi_{p,q}^2\big((\pi_{p,q}^1)^{-1}([A,B]_p)\big).\] Similarly, for any $[{\bf I}_p,Z]_p\in D^{\mathrm{I}}_{p,q}$ we define the fibral image of $[{\bf I}_p,Z]_p$ by 
\[ [{\bf I}_p,Z]_p^\sharp := \pi_{p,q}^1\big((\pi_{p,q}^2)^{-1}([{\bf I}_p,Z]_p)\big).\]
Then, it follows from \cite[Corollary 2.9]{Ng:2015} that for any $[{\bf I}_p,Z]_p\in D^{\mathrm{I}}_{p,q}$, we have
\[ Z^\sharp=[{\bf I}_p,Z]_p^\sharp
= \{[A,AZ]_p\in D_{p,q}: [A]\in \mathbb P^{p-1}\}\cong \mathbb P^{p-1}. \]
More generally, Ng \cite{Ng:2015} has introduced the double fibration
\begin{equation}\label{Eq:HDF2}
\mathbb P^{p+q-1}\cong G(1,p+q-1) \xleftarrow{\hat\pi_{p,q}^1} \mathcal F^{1,p}_{p+q} \xrightarrow{\hat\pi_{p,q}^2} G(p,q),
\end{equation}
where $\mathcal F^{1,p}_{p+q}:=\{(J,K)\in G(1,p+q-1)\times G(p,q) : J\subset K\}$, $\hat\pi_{p,q}^1(J,K)=J$ and $\hat\pi_{p,q}^2(J,K)=K$.
Consider the open subset $\mathcal D^{1,p}_{p,q}\subset \mathcal F^{1,p}_{p+q}$ defined by
\[ \mathcal D^{1,p}_{p,q}:=\{(J,K)\in \mathcal F^{1,p}_{p+q}: H_{p,q}|_K > 0\}, \]
where $H_{p,q}$ is the standard nondegenerate Hermitian form of signature $(p,q)$ on $\mathbb C^{p+q}$, i.e., $p$ eigenvalues of $H_{p,q}$ are $1$ and the other $q$ eigenvalues are $-1$.
Note that Ng \cite[Proposition 2.7]{Ng:2015} has proven that
\[ \mathcal D^{1,p}_{p,q} \cong \mathbb P^{p-1} \times D^{\mathrm{I}}_{p,q} \]
and the restriction of the double fibration (\ref{Eq:HDF2}) to $\mathcal D^{1,p}_{p,q}$ yields the double fibration (\ref{Eq:HDF1}).
We first state a lemma obtained in Ng \cite{Ng:2015}.

\begin{lemma}[cf.\,Ng \cite{Ng:2015}]\label{lem:sp_to_sp0}
Let $F:D^{\mathrm{I}}_{p,q}\to D^{\mathrm{I}}_{p',q'}$ be a holomorphic map, where $p,q,p'$ and $q'$ are positive integers such that $p\ge 2$ and $p'\ge 2$.
Then, $F$ maps every $(p-1,q)$-subspace of $D^{\mathrm{I}}_{p,q}$ into a $(p'-1,q')$-subspace of $D^{\mathrm{I}}_{p',q'}$ if and only if $F$ is a fibral-image-preserving holomorphic map with respect to the double fibrations
\[ D_{p,q} \xleftarrow{\pi_{p,q}^1} \mathbb P^{p-1}\times D^{\mathrm{I}}_{p,q}\xrightarrow{\pi_{p,q}^2} D^{\mathrm{I}}_{p,q}, \]
\[ D_{p',q'} \xleftarrow{\pi_{p',q'}^1} \mathbb P^{p'-1}\times D^{\mathrm{I}}_{p',q'} \xrightarrow{\pi_{p',q'}^2} D^{\mathrm{I}}_{p',q'}, \]
i.e., for any $[A,B]_p\in D_{p,q}$, we have $F([A,B]_p^\sharp) \subset [C,D]_{p'}^\sharp$ for some $[C,D]_{p'}\in D_{p',q'}$.
\end{lemma}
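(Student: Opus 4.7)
The plan is to reduce the biconditional to the following identification: the $(p-1,q)$-subspaces of $D^{\mathrm{I}}_{p,q}$ are exactly the sets of the form $[A,B]_p^\sharp$ as $[A,B]_p$ ranges over $D_{p,q}$, and analogously on the target side. Granted this bijection, both directions are immediate. If $F$ satisfies the subspace-to-subspace condition, then since $[A,B]_p^\sharp$ is a $(p-1,q)$-subspace, $F([A,B]_p^\sharp)$ lies in some $(p'-1,q')$-subspace of $D^{\mathrm{I}}_{p',q'}$, which by the identification is of the form $[C,D]_{p'}^\sharp$; thus $F$ is fibral-image-preserving. Conversely, any $(p-1,q)$-subspace $\Pi$ equals $[A,B]_p^\sharp$ for some $[A,B]_p\in D_{p,q}$, and fibral-image-preservation places $F(\Pi)$ inside some $[C,D]_{p'}^\sharp$, which is a $(p'-1,q')$-subspace of $D^{\mathrm{I}}_{p',q'}$.

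To establish the identification, first unwind the definition of $\pi_{p,q}^1$: the preimage $(\pi_{p,q}^1)^{-1}([A,B]_p)$ consists of pairs $([X],[{\bf I}_p,Z]_p)$ with $[X,XZ]_p=[A,B]_p$ in $\mathbb P^{p+q-1}$, which forces $[X]=[A]$ and $AZ=B$, so projecting via $\pi_{p,q}^2$ yields
\[
[A,B]_p^\sharp = \{Z\in D^{\mathrm{I}}_{p,q} : AZ=B\}.
\]
To recognize this as a $(p-1,q)$-subspace I pass to the Grassmannian picture: $[A,B]_p$ corresponds to the positive line $L=\mathbb C\cdot(A,B)\subset\mathbb C^{p+q}$ (positivity being the defining condition $A\overline{A}^T-B\overline{B}^T>0$), and $[A,B]_p^\sharp$ becomes $\{K\in D^{\mathrm{I}}_{p,q} : L\subset K\}$. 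An element of $U(p,q)$ moves $L$ to $\mathbb C\cdot e_1$; after this normalization, the decomposition $K=\mathbb C\cdot e_1\oplus(K\cap e_1^\perp)$, together with the fact that $(e_1^\perp, H_{p,q}|_{e_1^\perp})$ has signature $(p-1,q)$, identifies the set with the standard $(p-1,q)$-subspace $\{Z\in D^{\mathrm{I}}_{p,q} : \text{first row of }Z=0\}$, i.e., the image of $D^{\mathrm{I}}_{p-1,q}$ under a standard embedding. Conversely, since $U(p,q)$ acts transitively on $D_{p,q}$ (the space of positive lines) and equivariantly on the collection of $(p-1,q)$-subspaces of $D^{\mathrm{I}}_{p,q}$, every such subspace is of the form $[A,B]_p^\sharp$. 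The analogous statement on the target side requires only $p'\ge 2$.

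I do not anticipate a serious obstacle: the lemma is essentially a translation between two equivalent parametrizations of the same family of subspaces, and all nontrivial inputs (Tsai's classification of invariantly geodesic subspaces and Ng's realization $\mathcal D^{1,p}_{p,q}\cong\mathbb P^{p-1}\times D^{\mathrm{I}}_{p,q}$ from \cite{Ng:2015}) are already in hand. The hypotheses $p\ge 2$ and $p'\ge 2$ serve only to guarantee that the $\mathbb P^{p-1}$ and $\mathbb P^{p'-1}$ factors are positive-dimensional, so that the fibers and their $\pi^2$-projections are genuine positive-dimensional subvarieties and the statement is not vacuous.
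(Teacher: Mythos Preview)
Your proposal is correct and follows essentially the same approach as the paper: both reduce the biconditional to the identification of $(p-1,q)$-subspaces with the fibral images $[A,B]_p^\sharp=\{Z\in D^{\mathrm{I}}_{p,q}:AZ=B\}$, after which both implications are immediate. The only difference is cosmetic: the paper simply cites \cite{Ng:2015} for this identification, whereas you supply the Grassmannian/$U(p,q)$-transitivity argument explicitly.
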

\begin{proof}
Suppose $F$ maps every $(p-1,q)$-subspace of $D^{\mathrm{I}}_{p,q}$ into a $(p'-1,q')$-subspace of $D^{\mathrm{I}}_{p',q'}$.
For any $[A,B]_p\in D_{p,q}$ we have
\[ [A,B]_p^\sharp = \pi_{p,q}^2\big((\pi_{p,q}^1)^{-1}([A,B]_p)\big)
= \{ [{\bf I}_p,Z]_p\in D^{\mathrm{I}}_{p,q}: AZ=B \} \]
by \cite[p.\,12]{Ng:2015}. From Ng \cite{Ng:2015}, for any $[A,B]_p\in D_{p,q}$, the fibral image
\[ [A,B]_p^\sharp=\{ [{\bf I}_p,Z]_p\in D^{\mathrm{I}}_{p,q}: AZ=B \} \]
is a $(p-1,q)$-subspace of $D^{\mathrm{I}}_{p,q}$. Similarly, for any $[C,D]_{p'}\in D_{p',q'}$, the fibral image $[C,D]_{p'}^\sharp$ is a $(p'-1,q')$-subspace of $D^{\mathrm{I}}_{p',q'}$.
From the assumption, for any $[A,B]_p\in D_{p,q}$ we have $F([A,B]_p^\sharp) \subset [C,D]_{p'}^\sharp$ for some $[C,D]_{p'}\in D_{p',q'}$.

Conversely, suppose $F$ is a fibral-image-preserving holomorphic map with respect to the given holomorphic double fibrations.
Note that any $(r-1,s)$-subspace of $D^{\mathrm{I}}_{r,s}$ can be identified as a fibral image $[A,B]_r^\sharp$ for some $[A,B]\in D_{r,s}$, where $r\ge 2$ and $s\ge 1$ are integers.
Thus, the fibral-image-preserving holomorphic map $F$ actually maps every $(p-1,q)$-subspace of $D^{\mathrm{I}}_{p,q}$ into a $(p'-1,q')$-subspace of $D^{\mathrm{I}}_{p',q'}$, as desired.  
\end{proof}
Let $F: D^{\mathrm{I}}_{p,q}\to D^{\mathrm{I}}_{p',q'}$ be a fibral-image-preserving holomorphic map with respect to the double fibrations
\[ D_{p,q} \xleftarrow{\pi_{p,q}^1} \mathbb P^{p-1}\times D^{\mathrm{I}}_{p,q}\xrightarrow{\pi_{p,q}^2} D^{\mathrm{I}}_{p,q}, \]
\[ D_{p',q'} \xleftarrow{\pi_{p',q'}^1} \mathbb P^{p'-1}\times D^{\mathrm{I}}_{p',q'} \xrightarrow{\pi_{p',q'}^2} D^{\mathrm{I}}_{p',q'}. \]
Then, a holomorphic map $g:U\to D_{p',q'}$, where $U\subset D_{p,q}$ is an open subset, is called a \emph{local moduli map} of $F$ if and only if
\[ F([A,B]_p^\sharp) \subset g([A,B]_p)^\sharp \]
for all $[A,B]_p\in U \subset D_{p,q}$.

Now, we look for properties of the maps $G_h$.
Let $p,q,p'$ and $q'$ be integers such that $p'> p \ge 2$ and $q'>q\ge 2$.
Recall that $G_h:D^{\mathrm{I}}_{p,q}\to D^{\mathrm{I}}_{p',q'}$ is the proper holomorphic map defined by 
\[ G_h(Z)=\begin{bmatrix} Z & {\bf 0}\\ {\bf 0} & h(Z) \end{bmatrix}, \]
where $h:D^{\mathrm{I}}_{p,q}\to D^{\mathrm{I}}_{p'-p,q'-q}$ is a holomorphic map. We observe that $G_h$ maps every $(p-1,q)$-subspace of $D^{\mathrm{I}}_{p,q}$ into a $(p'-1,q')$-subspace of $D^{\mathrm{I}}_{p',q'}$, and $G_h$ also maps every $(p,q-1)$-subspace of $D^{\mathrm{I}}_{p,q}$ into a $(p',q'-1)$-subspace of $D^{\mathrm{I}}_{p',q'}$.
By definition, the map $(G_h)^\dagger: D^{\mathrm{I}}_{q,p}\to D^{\mathrm{I}}_{q',p'}$ is given by $(G_h)^\dagger(W)=(G_h(W^T))^T$ for $W\in D^{\mathrm{I}}_{q,p}$.
Then, for any $W\in D^{\mathrm{I}}_{q,p}$ we have
\[ (G_h)^\dagger(W)=\begin{bmatrix} W & {\bf 0}\\ {\bf 0} & (h(W^T))^T\end{bmatrix}= \begin{bmatrix} W & {\bf 0}\\ {\bf 0} & h^\dagger(W)\end{bmatrix}, \]
i.e., $(G_{h})^\dagger=G_{h^\dagger}$.

In addition, we have the following simple observation.
Let $p,q,p'$ and $q'$ be integers such that $p'> p \ge 2$ and $q'>q\ge 2$.
We let $h:D^{\mathrm{I}}_{p,q}\to D^{\mathrm{I}}_{p'-p,q'-q}$ be a holomorphic map.
We can also define proper holomorphic maps from $D^{\mathrm{I}}_{p,q}$ to $D^{\mathrm{I}}_{p',q'}$ by
\[ Z\mapsto \begin{bmatrix} {\bf 0} & h(Z)\\ Z & {\bf 0} \end{bmatrix}, \quad 
Z\mapsto \begin{bmatrix} h(Z) & {\bf 0}\\ {\bf 0} & Z \end{bmatrix},\quad
Z\mapsto \begin{bmatrix} {\bf 0} & Z\\ h(Z) & {\bf 0} \end{bmatrix}
\]
for $Z\in D^{\mathrm{I}}_{p,q}$. We observe that each of the above maps is equivalent to $G_{\hat h}$ for some holomorphic map $\hat h:D^{\mathrm{I}}_{p,q}\to D^{\mathrm{I}}_{p'-p,q'-q}$, i.e., all of the above maps are of diagonal type as well.

The following basic lemma explains the reason why the study of holomorphic maps $F:D^{\mathrm{I}}_{p,q}\to D^{\mathrm{I}}_{p',q'}$ is the same as that of holomorphic maps $F^\dagger:D^{\mathrm{I}}_{q,p}\to D^{\mathrm{I}}_{q',p'}$.

\begin{lemma}\label{lem:trans2}
Let $f:D^{\mathrm{I}}_{p,q}\to D^{\mathrm{I}}_{p',q'}$ be a holomorphic map.
Suppose that $f^\dagger$ $:$ $D^{\mathrm{I}}_{q,p}$ $\to$ $D^{\mathrm{I}}_{q',p'}$ is equivalent to the map $G_h$ for some holomorphic map $h:D^{\mathrm{I}}_{q,p} \to M(q'-q,p'-p)$.
Then, $f$ is equivalent to the map $G_{h^\dagger}$.
In other words, $f$ is of diagonal type if and only if $f^\dagger$ is of diagonal type.
Actually, for any holomorphic maps $F,G:D^{\mathrm{I}}_{p,q}\to D^{\mathrm{I}}_{p',q'}$ we have
\[ F \sim G \iff F^\dagger \sim G^\dagger. \]
\end{lemma}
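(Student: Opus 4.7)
The plan is to reduce everything to two basic facts: (i) the $\dagger$ operation is its own inverse and intertwines composition (Lemma \ref{lem:Trans1}), and (ii) $\dagger$ sends automorphisms to automorphisms in the opposite signature, so the equivalence relation is preserved under $\dagger$. Once this is in place, the specific claim about $G_h$ follows by plugging into the identity $(G_h)^\dagger=G_{h^\dagger}$ observed just before the lemma.

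First I would prove the last assertion $F\sim G \iff F^\dagger\sim G^\dagger$, since the first two assertions follow immediately from it. Suppose $F=\Psi\circ G\circ \psi$ with $\Psi\in\mathrm{Aut}(D^{\mathrm{I}}_{p',q'})$ and $\psi\in\mathrm{Aut}(D^{\mathrm{I}}_{p,q})$. Applying Lemma \ref{lem:Trans1} twice yields
\[ F^\dagger=(\Psi\circ G\circ \psi)^\dagger = \Psi^\dagger\circ G^\dagger\circ \psi^\dagger. \]
It remains to check that $\Psi^\dagger\in\mathrm{Aut}(D^{\mathrm{I}}_{q',p'})$ and $\psi^\dagger\in\mathrm{Aut}(D^{\mathrm{I}}_{q,p})$. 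For this, note that $\Psi$ is a biholomorphism of $D^{\mathrm{I}}_{p',q'}$, so $\Psi^{-1}\in\mathrm{Aut}(D^{\mathrm{I}}_{p',q'})$ as well. Lemma \ref{lem:Trans1} applied to $\Psi\circ \Psi^{-1}=\mathrm{id}_{D^{\mathrm{I}}_{p',q'}}$ (whose $\dagger$ is $\mathrm{id}_{D^{\mathrm{I}}_{q',p'}}$) gives $\mathrm{id}_{D^{\mathrm{I}}_{q',p'}} = \Psi^\dagger\circ(\Psi^{-1})^\dagger$, and symmetrically $(\Psi^{-1})^\dagger\circ\Psi^\dagger=\mathrm{id}_{D^{\mathrm{I}}_{q',p'}}$, so $\Psi^\dagger$ is a holomorphic self-map of $D^{\mathrm{I}}_{q',p'}$ with two-sided holomorphic inverse $(\Psi^{-1})^\dagger$, hence an automorphism. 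The same argument applies to $\psi^\dagger$. This gives the forward direction $F\sim G \Rightarrow F^\dagger\sim G^\dagger$. The reverse direction follows by applying this to $F^\dagger,G^\dagger$ and using $(F^\dagger)^\dagger=F$, $(G^\dagger)^\dagger=G$.

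For the first two assertions, assume $f^\dagger\sim G_h$ with $h:D^{\mathrm{I}}_{q,p}\to D^{\mathrm{I}}_{q'-q,p'-p}$. Applying the equivalence $F\sim G\iff F^\dagger\sim G^\dagger$ just established, we obtain $(f^\dagger)^\dagger\sim (G_h)^\dagger$. Since $(f^\dagger)^\dagger=f$ and, as noted in the preceding paragraph of the excerpt, $(G_h)^\dagger=G_{h^\dagger}$, this gives $f\sim G_{h^\dagger}$. Because $h^\dagger:D^{\mathrm{I}}_{p,q}\to D^{\mathrm{I}}_{p'-p,q'-q}$, this exactly says $f$ is of diagonal type. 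The converse (diagonal type of $f$ implies diagonal type of $f^\dagger$) is symmetric.

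There is no substantive obstacle here: the entire proof is bookkeeping around Lemma \ref{lem:Trans1} and the involutive nature of $\dagger$. The only point to be careful about is verifying that $\Psi\mapsto\Psi^\dagger$ really does carry $\mathrm{Aut}(D^{\mathrm{I}}_{p',q'})$ into $\mathrm{Aut}(D^{\mathrm{I}}_{q',p'})$, which is the step where the composition formula of Lemma \ref{lem:Trans1} is applied to an automorphism and its inverse.
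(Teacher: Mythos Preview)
Your proposal is correct and follows essentially the same approach as the paper: both arguments rest on Lemma~\ref{lem:Trans1}, the involutive identity $(f^\dagger)^\dagger=f$, and the computation $(G_h)^\dagger=G_{h^\dagger}$. The only difference is organizational---you establish the general equivalence $F\sim G\iff F^\dagger\sim G^\dagger$ first and then specialize, whereas the paper handles the specific case $f^\dagger\sim G_h$ directly and remarks that the general statement follows by the same reasoning; your explicit verification that $\Psi^\dagger$ is an automorphism (via $\Psi^\dagger\circ(\Psi^{-1})^\dagger=\mathrm{id}$) is a detail the paper simply asserts.
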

\begin{proof}
From the assumption, we have
\[ \Phi\circ (f^\dagger\circ \varphi) = G_h \]
for some $\Phi\in \mathrm{Aut}(D^{\mathrm{I}}_{q',p'})$ and $\varphi\in \mathrm{Aut}(D^{\mathrm{I}}_{q,p})$.
Then, we have 
\[ \Phi^\dagger \circ (f\circ \varphi^\dagger)=\Phi^\dagger \circ (f^\dagger\circ \varphi)^\dagger = (\Phi\circ (f^\dagger\circ \varphi))^\dagger= (G_h)^\dagger\]
by Lemma \ref{lem:Trans1} and $(f^\dagger)^\dagger=f$.
In particular, we have
\[ \Phi^\dagger \circ (f\circ \varphi^\dagger) = G_{h^\dagger} \]
by the fact that $(G_h)^\dagger=G_{h^\dagger}$.
Note that $\Phi^\dagger\in \mathrm{Aut}(D^{\mathrm{I}}_{p',q'})$ and $\varphi^\dagger\in \mathrm{Aut}(D^{\mathrm{I}}_{p,q})$. Thus, we have shown that $f$ is equivalent to $G_{h^\dagger}$.
In other words, if $f^\dagger$ is of diagonal type, then so is $f$.
Since $(f^\dagger)^\dagger=f$, we also obtain that if $f=(f^\dagger)^\dagger$ is of diagonal type, then so is $f^\dagger$.
By the above arguments, it follows readily that for any holomorphic maps $F,G:D^{\mathrm{I}}_{p,q}\to D^{\mathrm{I}}_{p',q'}$ we have
\[ F \sim G \iff F^\dagger \sim G^\dagger. \]
 
\end{proof}

We recall a result of Ng \cite{Ng:2015} that is useful for our study.
\begin{theorem}[cf.\,$\text{Ng \cite[Theorem 5.3]{Ng:2015}}$] \label{Thm:thm5.3_Ng15}
Let $F:D^{\mathrm{I}}_{r,s}\to D^{\mathrm{I}}_{r',s'}$ be a proper holomorphic map, where $r\ge r'\ge 2$.
Suppose that $F$ maps every $(r-1,s)$-subspace of $D^{\mathrm{I}}_{r,s}$ into a $(r'-1,s')$-subspace of $D^{\mathrm{I}}_{r',s'}$.
If $F(D^{\mathrm{I}}_{r,s})$ is not contained in a single $(r'-1,s')$-subspace of $D^{\mathrm{I}}_{r',s'}$, then we have $r=r'$, $s\le s'$ and $F$ is the standard embedding, i.e., $F$ is equivalent to the linear map $Z \mapsto \begin{bmatrix} Z ,\; {\bf 0} \end{bmatrix}$ for $Z\in D^{\mathrm{I}}_{r,s}$.
\end{theorem}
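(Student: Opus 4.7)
The plan is to extract the moduli map of $F$, upgrade it to a globally defined linear map, and then recover the form of $F$ from this moduli map. First, by Lemma \ref{lem:sp_to_sp0}, the hypothesis that $F$ sends $(r-1,s)$-subspaces into $(r'-1,s')$-subspaces is equivalent to $F$ being fibral-image-preserving for the two double fibrations, so one obtains a local moduli map $g:U\to D_{r',s'}$ on some non-empty open $U\subset D_{r,s}$ with $F([A,B]_r^\sharp)\subset g([A,B]_r)^\sharp$. The hypothesis that $F(D^{\mathrm{I}}_{r,s})$ is not contained in a single $(r'-1,s')$-subspace translates precisely to $g$ being non-constant on $U$.

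The central step, and the main obstacle, is to show that $g$ extends to a rational map $\mathbb P^{r+s-1}\dashrightarrow\mathbb P^{r'+s'-1}$ induced by some linear map $L:\mathbb C^{r+s}\to\mathbb C^{r'+s'}$. The heuristic is that each fibral image $[A,B]_r^\sharp=\{Z:AZ=B\}$ is cut out by linear equations in $Z$ and the incidence structure of the double fibration is purely algebraic, so the compatibility $F([A,B]_r^\sharp)\subset g([A,B]_r)^\sharp$ should force $g$ to respect the projective linear structure on the parameter spaces. To execute this I would first use Hartogs-type extension combined with the properness of $F$ to extend $g$ meromorphically across $D_{r,s}\setminus U$, then exploit the fact that generic fibral images on the source meet in lower-dimensional linear subspaces whose $F$-images must lie in the intersections of the corresponding target fibral images (again linear) to show that $g$ sends collinear triples in $D_{r,s}$ to collinear triples in $D_{r',s'}$. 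A projective-geometry argument in the spirit of the fundamental theorem of projective geometry then yields the linearity of $g$.

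Once $g=[L\,\cdot\,]$, the inclusion $g(D_{r,s})\subset D_{r',s'}$ forces $L$ to carry the positive cone $\{H_{r,s}>0\}$ into $\{H_{r',s'}>0\}$. Because $H_{r,s}$ admits an $r$-dimensional positive-definite subspace whereas $H_{r',s'}$ admits only an $r'$-dimensional one, Sylvester's law of inertia gives $r\le r'$; combined with the hypothesis $r\ge r'$, this yields $r=r'$. Non-degeneracy of $F$ then forces $L$ to be injective on the positive cone, whence $r+s\le r'+s'$, i.e.\ $s\le s'$. By the transitive action of $U(r',s')$ on $\mathbb C^{r'+s'}$, $L$ may be normalised to the standard inclusion $\mathbb C^{r+s}\hookrightarrow\mathbb C^{r'+s'}$.

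Finally, with $g$ now equal to the standard inclusion, the pointwise constraint $F([A,B]_r^\sharp)\subset g([A,B]_r)^\sharp$ across all $[A,B]_r\in D_{r,s}$ pins $F$ down, up to automorphisms of source and target, to the linear embedding $Z\mapsto[Z\ \mathbf 0]$. I expect the hardest portion of the argument to be the linearisation step in the second paragraph: upgrading the meromorphic extension of $g$ to a projectively linear map will require a careful interplay between the algebraic incidence structure of the two double fibrations and the properness of $F$, and this is where the bulk of the technical work lies.
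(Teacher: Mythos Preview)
This theorem is not proved in the present paper; it is quoted from Ng \cite[Theorem~5.3]{Ng:2015} and used as a black box (for instance in the proof of Proposition~\ref{prop:sp_to_sp1}). There is therefore no proof here to compare your proposal against.

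That said, your overall strategy---passing to a moduli map $g$ on the generalised ball $D_{r,s}$, establishing its linearity, and then reading off $r=r'$, $s\le s'$ and the form of $F$---is indeed the skeleton of Ng's argument. Where your sketch diverges is in the linearisation step. You propose to argue via collinear triples and a fundamental-theorem-of-projective-geometry style argument. Ng instead exploits the fact that $F$ and $g$ are \emph{mutual} moduli maps (his Proposition~2.3, cited in this paper as \cite[Proposition~2.3]{Ng:2015}), so that $g$ is itself fibral-image-preserving: it sends the $(r-1)$-planes that are fibral images in $D_{r,s}$ into the $(r'-1)$-planes that are fibral images in $D_{r',s'}$. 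Since $r-1\ge r'-1$, Ng's Proposition~4.2 (the same result invoked in this paper in the proof of Proposition~\ref{Pro:RatProperM_FIP1}) then forces either $\deg g=1$ or $g(U)$ lies in a single $(r'-1)$-plane, the latter being excluded by the non-degeneracy hypothesis on $F$. This route is more direct than the collinearity argument you outline and sidesteps the Hartogs-extension subtleties you flag as the main obstacle. Your proposal is not wrong in spirit, but the hard step you isolate has a cleaner resolution through the mutual-moduli-map structure already built into the double-fibration machinery.
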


Applying the above results from Ng \cite{Ng:2015} and analogous arguments in Ng \cite[Proof of Proposition 5.5]{Ng:2015}, we have
\begin{proposition}\label{prop:sp_to_sp1}
Let $f:D^{\mathrm{I}}_{p,q}\to D^{\mathrm{I}}_{p',q'}$ be a proper holomorphic map, where $p\ge q \ge 2$ and $q' < p$.
Then, $f$ maps every $(p,q-1)$-subspace of $D^{\mathrm{I}}_{p,q}$ into a $(p',q'-1)$-subspace of $D^{\mathrm{I}}_{p',q'}$.
In particular, defining the map $f^\dagger: D^{\mathrm{I}}_{q,p}\to D^{\mathrm{I}}_{q',p'}$ by
$f^\dagger(Z) = (f(Z^T))^T$ for $Z\in D^{\mathrm{I}}_{q,p}$,
$f^\dagger$ is a proper holomorphic map which maps every $(q-1,p)$-subspace of $D^{\mathrm{I}}_{q,p}$ into a $(q'-1,p')$-subspace of $D^{\mathrm{I}}_{q',p'}$.
Moreover, we have $q\le q'$.
If in addition that $q'\le 2q-1$, then $p\le p'$.
In particular, if $q'=q+1$ so that $q+1=q'\le 2q-1$ by $q\ge 2$, then $p\le p'$.
\end{proposition}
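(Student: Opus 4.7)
The plan is to establish the four assertions in order, using the dagger correspondence and the non-existence results of Section 3. For the first assertion, that $f$ maps every $(p,q-1)$-subspace of $D^{\mathrm{I}}_{p,q}$ into some $(p',q'-1)$-subspace of $D^{\mathrm{I}}_{p',q'}$, I would adapt Ng's Proposition 5.5 argument. Proposition \ref{Prop:MCS_to_MCS} already gives that $f$ sends every $(p-1,q-1)$-subspace into some $(p'-1,q'-1)$-subspace. Fix a $(p,q-1)$-subspace $\Pi\cong D^{\mathrm{I}}_{p,q-1}$; the $(p-1,q-1)$-subspaces of $D^{\mathrm{I}}_{p,q}$ contained in $\Pi$ form a $\mathbb P^{p-1}$-family all sharing a common $(q-1)$-dimensional ``column direction'' in $\mathbb C^q$. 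Assigning to each such subspace $P\subset\Pi$ the containing $(p'-1,q'-1)$-subspace $Q(P)$ of $D^{\mathrm{I}}_{p',q'}$ yields a holomorphic classifying map; composing with the projection onto the Grassmannian $G(q'-1,q')\cong\mathbb P^{q'-1}$ of $(q'-1)$-dimensional column directions in $\mathbb C^{q'}$ produces a holomorphic map $\mathbb P^{p-1}\to\mathbb P^{q'-1}$. Since $q'<p$, any such map is constant, and its constant value is the common column direction that exhibits a single $(p',q'-1)$-subspace of $D^{\mathrm{I}}_{p',q'}$ containing $f(\Pi)$.

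The second assertion is then essentially formal. The map $Z\mapsto Z^T$ is a biholomorphism between $D^{\mathrm{I}}_{p,q}$ and $D^{\mathrm{I}}_{q,p}$ (and similarly between the targets), so $f^\dagger$ is proper; a $(q-1,p)$-subspace of $D^{\mathrm{I}}_{q,p}$ is the transpose of a $(p,q-1)$-subspace of $D^{\mathrm{I}}_{p,q}$, and likewise in the target, so the subspace-preservation property for $f^\dagger$ follows from that for $f$.

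For the rank inequality $q\le q'$, Proposition \ref{Prop:rk_nondecrease1} applied to $f$ yields $q=\mathrm{rank}(D^{\mathrm{I}}_{p,q})\le\mathrm{rank}(D^{\mathrm{I}}_{p',q'})=\min(p',q')$, so in particular $q\le q'$ and $q\le p'$. Finally, assume $q'\le 2q-1$ and suppose for contradiction that $p'<p$. Combined with the standing hypothesis $q'<p$ this gives $\max(p',q')<p$, while the rank argument gives $q\le\min(p',q')\le q'\le 2q-1$. All the hypotheses of Proposition \ref{Pro:NonEx_PHM2} are then met, contradicting the existence of $f$, so $p\le p'$. The specialization $q'=q+1$ is immediate from $q\ge 2$.

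The hard part will be the first step, specifically constructing the classifying map $P\mapsto Q(P)$ cleanly as a holomorphic map into the relevant moduli space and then descending to the $(q'-1)$-dimensional column-direction projection. The dimension inequality $q'<p$ is precisely what forces that projection to be constant --- holomorphic maps $\mathbb P^{p-1}\to\mathbb P^{q'-1}$ being necessarily constant when $q'<p$ --- so the argument is sharp at this point.
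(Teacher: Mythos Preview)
Your arguments for the second, third, and fourth assertions are correct. For the third you invoke Proposition~\ref{Prop:rk_nondecrease1} exactly as the paper does, and for the fourth you use Proposition~\ref{Pro:NonEx_PHM2}, which is a clean repackaging of the same Ng~\cite[Theorem~1.3]{Ng:2015} argument the paper runs directly. No issue there.

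The first assertion, however, has real gaps as you have sketched it. First, the $(p-1,q-1)$-subspaces of $D^{\mathrm{I}}_{p,q}$ contained in a fixed $(p,q-1)$-subspace $\Pi\cong D^{\mathrm{I}}_{p,q-1}$ are \emph{not} parametrized by $\mathbb P^{p-1}$: they are exactly the $(p-1,q-1)$-subspaces of $D^{\mathrm{I}}_{p,q-1}$, whose moduli space is the noncompact domain $D_{p,q-1}\subset\mathbb P^{p+q-2}$ (only those through a fixed point form a $\mathbb P^{p-1}$). Second, the assignment $P\mapsto Q(P)$ is not a priori single-valued, since $f(P)$ may sit inside many $(p'-1,q'-1)$-subspaces; you would need either a uniqueness statement or a local holomorphic section, and the latter does not globalize over a compact $\mathbb P^{p-1}$ without further work. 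Third, the moduli of $(p',q'-1)$-subspaces of $D^{\mathrm{I}}_{p',q'}$ is $D_{q',p'}\subset\mathbb P^{q'+p'-1}$, not $\mathbb P^{q'-1}$; your ``column-direction'' projection is the rational map $[A,B]_{q'}\mapsto[A]$, but constancy of $[A]$ alone says only that the target subspaces are parallel, not that they coincide, so it does not force $f(\Pi)$ into a single $(p',q'-1)$-subspace.

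The paper bypasses all of this by invoking Theorem~\ref{Thm:thm5.3_Ng15} (Ng~\cite[Theorem~5.3]{Ng:2015}) once. From Proposition~\ref{Prop:MCS_to_MCS} one knows that the restricted map $(f|_\Pi)^T:D^{\mathrm{I}}_{p,q-1}\to D^{\mathrm{I}}_{q',p'}$ sends every $(p-1,q-1)$-subspace into a $(q'-1,p')$-subspace; since $p>q'\ge 2$, the alternative ``$r=r'$'' in Theorem~\ref{Thm:thm5.3_Ng15} is impossible, so the image must lie in a single $(q'-1,p')$-subspace, i.e.\ $f(\Pi)$ lies in a single $(p',q'-1)$-subspace. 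That theorem absorbs precisely the moduli-map construction, well-definedness, and constancy issues you are trying to handle by hand.
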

\begin{proof}
From Proposition \ref{Prop:rk_nondecrease1} we have $\mathrm{rank}(D^{\mathrm{I}}_{p',q'})=\min\{p',q'\}\ge q =\mathrm{rank}(D^{\mathrm{I}}_{p,q})$ $\ge$ $2$ so that $p'\ge q\ge 2$ and $q'\ge q\ge 2$.
Let $X_{p,q-1}\subset D^{\mathrm{I}}_{p,q}$ be an arbitrary $(p,q-1)$-subspace.
Then, for any $(p-1,q-1)$-subspace $X'_{p-1,q-1}\subset X_{p,q-1}$, we have $f(X'_{p-1,q-1})\subset Y'_{p'-1,q'-1}$ for some $(p'-1,q'-1)$-subspace $Y'_{p'-1,q'-1}\subset D^{\mathrm{I}}_{p',q'}$ by Proposition \ref{Prop:MCS_to_MCS}.
Thus, we have
\[ f(X'_{p-1,q-1})\subset Y'_{p'-1,q'-1} \subset Y_{p',q'-1} \subset D^{\mathrm{I}}_{p',q'} \]
for some $(p',q'-1)$-subspace $Y_{p',q'-1}$ of $D^{\mathrm{I}}_{p',q'}$.
This induces a proper holomorphic map
\[ f|_{X_{p,q-1}}: X_{p,q-1}\cong D^{\mathrm{I}}_{p,q-1} \to D^{\mathrm{I}}_{p',q'} \]
which maps every $(p-1,q-1)$-subspace of $X_{p,q-1}\cong D^{\mathrm{I}}_{p,q-1}$ into a $(p',q'-1)$-subspace of $D^{\mathrm{I}}_{p',q'}$.
We thus get a proper holomorphic map
\[ (f|_{X_{p,q-1}})^T: X_{p,q-1}\cong D^{\mathrm{I}}_{p,q-1} \to D^{\mathrm{I}}_{q',p'} \]
which maps every $(p-1,q-1)$-subspace of $X_{p,q-1}\cong D^{\mathrm{I}}_{p,q-1}$ into a $(q'-1,p')$-subspace of $D^{\mathrm{I}}_{q',p'}$.
Since $p>q'$, it follows from Ng \cite[Theorem 5.3]{Ng:2015} (i.e., Theorem \ref{Thm:thm5.3_Ng15}) that
\[ f^T(X_{p,q-1}) \subset Y_{q'-1,p'} \]
for some $(q'-1,p')$-subspace $Y_{q'-1,p'}$ of $D^{\mathrm{I}}_{q',p'}$, i.e., $f(X_{p,q-1}) \subset Z_{p',q'-1}$ for some $(p',q'-1)$-subspace $Z_{p',q'-1}$ of $D^{\mathrm{I}}_{p',q'}$.
This shows that $f$ maps every $(p,q-1)$-subspace of $D^{\mathrm{I}}_{p,q}$ into a $(p',q'-1)$-subspace of $D^{\mathrm{I}}_{p',q'}$.
Defining $f^\dagger$ as in the statement of Proposition \ref{prop:sp_to_sp1}, we then get the desired result for $f^\dagger$.

Suppose $q'\le 2q-1$. We are going to prove that $p\le p'$. Assume the contrary that $p'<p$. By composing $f$ with a standard embedding $\iota:D^{\mathrm{I}}_{p',q'}\hookrightarrow D^{\mathrm{I}}_{p,q'}$, we have a proper holomorphic map $\iota\circ f:D^{\mathrm{I}}_{p,q}\to D^{\mathrm{I}}_{p,q'}$.
Since $q'<p$, we have $q' \le \min\{2q-1,p\}$.
Then, it follows from Ng \cite[Theorem 1.3]{Ng:2015} that $\iota\circ f$ is standard, and so is $f$.
But then $f$ would induce a standard embedding from $\mathbb B^p$ to $D^{\mathrm{I}}_{p',q'}$ so that $p\le \max\{p',q'\}$, which contradicts with the assumption that $p'<p$ and $q'<p$.
Hence, we have $p'\ge p$.

Since $q\ge 2$, if $q'=q+1$, then $q'\le 2q-1$ and we still have $p'\le p$ by the above conclusion.  
\end{proof}

As a consequence of Proposition \ref{prop:sp_to_sp1} and the known results from \cite{Tsai:1993,Ng:2015}, we have 

\begin{proposition}\label{prop:proper2}
Let $f:D^{\mathrm{I}}_{p,q}\to D^{\mathrm{I}}_{p',q'}$ be a proper holomorphic map, where $p\ge q \ge 2$ and
$q' < \min\{p,2q-1\}$.
Then, we have 
\begin{enumerate}
\item[(1)] $q\le q'$, $p\le p'$ and $f$ maps every $(p,q-1)$-subspace of $D^{\mathrm{I}}_{p,q}$ into a $(p',q'-1)$-subspace of $D^{\mathrm{I}}_{p',q'}$.
\item[(2)] If $q=q'$ or $p=p'$, then $f$ is standard. In particular, if $f$ is nonstandard, then we have $q<q'$ and $p<p'$.
\end{enumerate}
\end{proposition}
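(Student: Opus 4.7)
\medskip
\noindent\emph{Proof proposal.} The plan is to derive Proposition~\ref{prop:proper2} as an essentially immediate consequence of Proposition~\ref{prop:sp_to_sp1}, together with Tsai's equal-rank total geodesy theorem \cite{Tsai:1993} and Ng's rigidity theorem \cite[Theorem~1.3]{Ng:2015}, both of which have already been invoked in the proof of Proposition~\ref{prop:sp_to_sp1}.

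For part~(1), I would begin by noting that the hypothesis $q'<\min\{p,2q-1\}$ automatically gives both $q'<p$ and $q'\le 2q-1$, which are precisely the two conditions required to apply Proposition~\ref{prop:sp_to_sp1}. Invoking that proposition directly yields $q\le q'$, $p\le p'$, and the statement that $f$ sends every $(p,q-1)$-subspace of $D^{\mathrm{I}}_{p,q}$ into a $(p',q'-1)$-subspace of $D^{\mathrm{I}}_{p',q'}$.

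For part~(2) I would split into the two equality cases. If $p=p'$, then $f:D^{\mathrm{I}}_{p,q}\to D^{\mathrm{I}}_{p,q'}$ has the same first index on both sides and satisfies $q'\le\min\{2q-1,p\}$, so Ng's Theorem~1.3 applies directly and forces $f$ to be standard. If instead $q=q'$, then part~(1) combined with $p\le p'$ gives $q\le p'$, and hence
\[ \mathrm{rank}(D^{\mathrm{I}}_{p',q'})=\min\{p',q'\}=q'=q=\mathrm{rank}(D^{\mathrm{I}}_{p,q})\ge 2. \]
Tsai's theorem~\cite{Tsai:1993} then forces the proper holomorphic map $f$ to be totally geodesic, and the classification of totally geodesic proper holomorphic embeddings between equal-rank type-$\mathrm{I}$ domains shows that $f$ must be equivalent to the Harish-Chandra standard embedding $Z\mapsto [Z,\mathbf{0}]$; that is, $f$ is standard. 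The final ``in particular'' clause is then just the contrapositive.

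The only nontrivial step is the $q=q'$ case, where one needs to upgrade Tsai's total geodesy conclusion to the stronger assertion that $f$ is equivalent to the standard embedding $\iota$ in the sense of the introduction. This classification of equal-rank totally geodesic embeddings between type-$\mathrm{I}$ domains is classical and should be quotable without further work, so no genuine obstacle is anticipated.
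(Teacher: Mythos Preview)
Your proposal is correct and follows essentially the same route as the paper: Part~(1) is a direct invocation of Proposition~\ref{prop:sp_to_sp1}, and Part~(2) is handled by Tsai's equal-rank theorem in the case $q=q'$ and by Ng's Theorem~1.3 in the case $p=p'$. The only difference is cosmetic: the paper treats the two equality cases in the opposite order, and it cites Tsai's Main Theorem as directly yielding that $f$ is standard rather than first concluding total geodesy and then invoking a separate classification step---so your caveat in the final paragraph, while not wrong, is unnecessary.
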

\begin{proof}
Part (1) follows directly from Proposition \ref{prop:sp_to_sp1}.
For Part (2), we first consider the case of $q=q'$.
Since $\mathrm{rank}(D^{\mathrm{I}}_{p,q})=q=q' \ge \mathrm{rank}(D^{\mathrm{I}}_{p',q'})$ and $q\ge 2$, it follows from Tsai \cite[Main Theorem]{Tsai:1993} that $f$ is standard.
Now, if $p=p'$, then $f:D^{\mathrm{I}}_{p,q}\to D^{\mathrm{I}}_{p,q'}$ is a proper holomorphic map with $p\ge q \ge 2$ and
$q' < \min\{p,2q-1\}$ and thus $f$ is standard by Ng \cite[Theorem 1.3]{Ng:2015}. The proof is complete.  
\end{proof}

Concerning \cite[Conjecture 3.10]{NTY:2016}, it is natural to raise the following question.
\begin{problem}\label{Pro:FIP1}
Let $F:D^{\mathrm{I}}_{p,q} \to D^{\mathrm{I}}_{p',q'}$ be a proper holomorphic map, where $p\ge q\ge 2$.
Suppose $p\le \min\{p',q'\}$. Does $F$ still map every $(p,q-1)$-subspace {\rm(}or $(p-1,q)$-subspace{\rm)} into a $(p',q'-1)$-subspace {\rm(}or a $(p'-1,q')$-subspace{\rm)}?
\end{problem}
\begin{remark}
Note that \cite[Conjecture 3.10]{NTY:2016} holds for proper holomorphic maps $F:D^{\mathrm{I}}_{p,q} \to D^{\mathrm{I}}_{p',q'}$ with $p\ge q\ge 2$ and $q'<p$ {\rm(}see Proposition \ref{prop:sp_to_sp1}{\rm)}.
That is the reason why we would like to consider the case where $\min\{p',q'\}\ge p$ in order to solve \cite[Conjecture 3.10]{NTY:2016} completely for all proper holomorphic maps between type-$\mathrm{I}$ irreducible bounded symmetric domains.
\end{remark}

In \cite{Ng:2015}, Ng has proven the following.
\begin{proposition}[cf.\,$\text{Ng \cite[Proposition 2.15]{Ng:2015}}$]\label{Prop:Pro2.15_Ng15}
Let $F:D^{\mathrm{I}}_{p,q} \to D^{\mathrm{I}}_{p',q'}$ be a fibral-image-preserving holomorphic map with respect to the double fibrations
\[ D_{p,q} \xleftarrow{\pi_{p,q}^1} \mathbb P^{p-1}\times D^{\mathrm{I}}_{p,q}\xrightarrow{\pi_{p,q}^2} D^{\mathrm{I}}_{p,q}, \]
 \[ D_{p',q'} \xleftarrow{\pi_{p',q'}^1} \mathbb P^{p'-1}\times D^{\mathrm{I}}_{p',q'} \xrightarrow{\pi_{p',q'}^2} D^{\mathrm{I}}_{p',q'}. \]
Then, $F$ has a local moduli map.
\end{proposition}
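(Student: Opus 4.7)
The plan is to realize, for each $[A,B]_p\in D_{p,q}$, the set of defining pairs $(C,D)$ of $(p'-1,q')$-subspaces containing $F([A,B]_p^\sharp)$ as a linear subspace of $\mathbb C^{p'+q'}$, and then to produce the local moduli map $g$ by choosing a holomorphic section of the resulting family. Since a $(p'-1,q')$-subspace of $D^{\mathrm{I}}_{p',q'}$ has the form $[C,D]_{p'}^\sharp=\{[{\bf I}_{p'},W]_{p'}\in D^{\mathrm{I}}_{p',q'}:CW=D\}$, the inclusion $F([A,B]_p^\sharp)\subset [C,D]_{p'}^\sharp$ is equivalent to $CF(Z)=D$ for every $Z\in[A,B]_p^\sharp$. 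I therefore set
\[ V([A,B]_p):=\{(C,D)\in \mathbb C^{p'+q'}:CF(Z)=D\text{ for all }Z\in[A,B]_p^\sharp\}, \]
so that the fibral-image-preserving hypothesis says precisely that $V([A,B]_p)$ contains a vector whose projective class lies in the open set $D_{p',q'}\subset \mathbb P^{p'+q'-1}$.

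The next task is to verify that $V$ assembles into a holomorphic family over $D_{p,q}$. Fixing a base point $[A_0,B_0]_p\in D_{p,q}$, I would choose a holomorphic local solution $Z_0([A,B]_p)\in D^{\mathrm{I}}_{p,q}$ to $AZ_0=B$ (say, via Cramer's rule in a coordinate patch) together with a holomorphically varying affine parametrization $\psi_{[A,B]_p}\colon \mathbb C^{(p-1)q}\to [A,B]_p^\sharp$ based at $Z_0([A,B]_p)$. The condition $(C,D)\in V([A,B]_p)$ is then equivalent to identical vanishing of the holomorphic function $t\mapsto CF(\psi_{[A,B]_p}(t))-D$, which by Taylor expansion becomes the infinite linear system
\[ CF(Z_0([A,B]_p))=D,\qquad C\cdot \partial_t^\alpha(F\circ\psi_{[A,B]_p})\big|_{t=0}=0\quad(|\alpha|\ge 1), \]
whose coefficients depend holomorphically on $[A,B]_p$. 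Because $(C,D)$ lives in the finite-dimensional space $\mathbb C^{p'+q'}$, truncating at a sufficiently large order $N_0$ already cuts out $V$ at $[A_0,B_0]_p$, and on an open neighborhood $U_0$ of $[A_0,B_0]_p$ where the rank of this truncated system is locally constant, $V$ restricts to a holomorphic vector subbundle of the trivial bundle $\mathbb C^{p'+q'}\times U_0$.

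The local moduli map is then produced by a standard section argument. Pick a holomorphic frame of $V|_{U_0}$. By the fibral-image-preserving hypothesis there is some $(C,D)\in V([A_0,B_0]_p)$ with $[C,D]_{p'}\in D_{p',q'}$, so by openness of $D_{p',q'}\subset \mathbb P^{p'+q'-1}$ an appropriate constant linear combination of the frame vectors yields, after possibly shrinking $U_0$, a nowhere-vanishing holomorphic section $s\colon U_0\to \mathbb C^{p'+q'}\setminus\{0\}$ of $V$ whose projective class stays in $D_{p',q'}$ throughout $U_0$. Setting $g([A,B]_p):=[s([A,B]_p)]_{p'}$ yields a holomorphic map $g\colon U_0\to D_{p',q'}$ satisfying $F([A,B]_p^\sharp)\subset g([A,B]_p)^\sharp$ for every $[A,B]_p\in U_0$, i.e., a local moduli map of $F$.

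The main technical hurdle is the second paragraph: after Taylor-expanding one has an a priori infinite family of linear conditions, and the content is to argue that on an open subset they are governed by a genuine holomorphic bundle morphism of finite rank, so that the kernel $V$ is a holomorphic subbundle. Once this holomorphic-family structure is in place, the subbundle reduction via constancy of rank and the selection of a section with projective class in $D_{p',q'}$ are formal, the latter being automatic from openness of $D_{p',q'}$ and the fibral-image-preserving hypothesis.
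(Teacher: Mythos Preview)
The paper does not prove this proposition; it is quoted from Ng \cite[Proposition~2.15]{Ng:2015}. The paper does, however, carry out closely analogous constructions in Proposition~\ref{Pro:GenBall_FIPM1} and in Section~\ref{subsection:B_FIPM}, and your outline is in the same spirit: encode the admissible $[C,D]_{p'}$ as a linear subspace $V([A,B]_p)\subset\mathbb C^{p'+q'}$, show the family $\{V(x)\}$ is a holomorphic subbundle on an open set, and pick a section with projective class in $D_{p',q'}$. The paper's variants replace your Taylor expansion by evaluation at finitely many holomorphically varying points $Z_1(x),\ldots,Z_s(x)\in x^\sharp$; either device produces a finite linear system with holomorphic coefficients, so this difference is cosmetic.

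There is, however, a genuine gap in your second paragraph. You truncate the Taylor system at some order $N_0$ so that the truncated kernel equals $V$ \emph{at the chosen base point}, and then pass to an open set $U_0$ where the rank of the truncated system is constant. On $U_0$ the kernel of the \emph{truncated} system is indeed a holomorphic subbundle, but nothing you have said forces it to coincide with $V$ away from the base point: at nearby $x$ the full system may impose conditions not seen at order $\le N_0$, so that $V(x)\subsetneq\ker(\text{truncated})(x)$. Your section is then only a section of the truncated kernel and need not satisfy $F(x^\sharp)\subset g(x)^\sharp$. The fix, which is exactly what Section~\ref{subsection:B_FIPM} does, is to first restrict to the dense open locus where $\dim V(x)$ is minimal, say $k_0$ (upper semi-continuity of fibre dimension on the incidence variety $\{(x,(C,D)):(C,D)\in V(x)\}$), and only then choose the base point and the truncation. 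On that locus the sandwich
\[
k_0=\dim V(x)\le \dim\ker(\text{truncated})(x)\le \dim\ker(\text{truncated})(x_0)=\dim V(x_0)=k_0
\]
forces equality, and the remainder of your argument goes through verbatim.
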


Then, by Proposition \ref{Prop:Pro2.15_Ng15} and Proposition \ref{prop:sp_to_sp1} we have

\begin{proposition}\label{Pro:FIP1}
Let $f:D^{\mathrm{I}}_{p,q}\to D^{\mathrm{I}}_{p',q'}$ be a proper holomorphic map, where $p\ge q \ge 2$ and $q' < p$.
Let $f^\dagger: D^{\mathrm{I}}_{q,p}\to D^{\mathrm{I}}_{q',p'}$ be the proper holomorphic map defined by
$f^\dagger(Z)$ $:=$ $(f(Z^T))^T$ for $Z\in D^{\mathrm{I}}_{q,p}$.
Then, $f^\dagger$ is a fibral-image-preserving holomorphic map with respect to the double fibrations
\[ D_{q,p} \xleftarrow{\pi_{q,p}^1} \mathbb P^{q-1}\times D^{\mathrm{I}}_{q,p}\xrightarrow{\pi_{q,p}^2} D^{\mathrm{I}}_{q,p}, \]
 \[ D_{q',p'} \xleftarrow{\pi_{q',p'}^1} \mathbb P^{q'-1}\times D^{\mathrm{I}}_{q',p'} \xrightarrow{\pi_{q',p'}^2} D^{\mathrm{I}}_{q',p'}. \]
In particular, $f^\dagger$ has a local moduli map $g:U\subset D_{q,p} \to D_{q',p'}$ such that $g$ is holomorphic, where $U\subset D_{q,p}$ is a connected open subset.
\end{proposition}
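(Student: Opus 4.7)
The plan is a straightforward chain through the three results already established earlier in the paper, with Proposition \ref{prop:sp_to_sp1}, Lemma \ref{lem:sp_to_sp0}, and Proposition \ref{Prop:Pro2.15_Ng15} applied in sequence. No genuinely new idea is required; the only care needed is tracking the swap $(p,q)\leftrightarrow (q,p)$ induced by the dagger involution, which has already been packaged for us.

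First, I would invoke Proposition \ref{prop:sp_to_sp1}. Under the standing hypotheses $p\ge q\ge 2$ and $q'<p$, that proposition tells us directly that $f^\dagger:D^{\mathrm{I}}_{q,p}\to D^{\mathrm{I}}_{q',p'}$ is a proper holomorphic map that sends every $(q-1,p)$-subspace of $D^{\mathrm{I}}_{q,p}$ into some $(q'-1,p')$-subspace of $D^{\mathrm{I}}_{q',p'}$, and moreover guarantees $q'\ge q\ge 2$. In particular, both $q\ge 2$ and $q'\ge 2$ hold, so the hypotheses of Lemma \ref{lem:sp_to_sp0} are met for $f^\dagger$ viewed as a map between type-$\mathrm{I}$ domains of the appropriate shape.

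Next, I would apply Lemma \ref{lem:sp_to_sp0} to $f^\dagger$, relabeling its input pair $(p,q,p',q')$ as $(q,p,q',p')$. Its stated equivalence converts the subspace-preserving property obtained in the previous step into the precise assertion that $f^\dagger$ is a fibral-image-preserving holomorphic map with respect to the two advertised double fibrations
\[ D_{q,p} \xleftarrow{\pi_{q,p}^1} \mathbb P^{q-1}\times D^{\mathrm{I}}_{q,p}\xrightarrow{\pi_{q,p}^2} D^{\mathrm{I}}_{q,p},\qquad D_{q',p'} \xleftarrow{\pi_{q',p'}^1} \mathbb P^{q'-1}\times D^{\mathrm{I}}_{q',p'} \xrightarrow{\pi_{q',p'}^2} D^{\mathrm{I}}_{q',p'}. \]

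Finally, I would feed $f^\dagger$ into Proposition \ref{Prop:Pro2.15_Ng15}, which associates to any such fibral-image-preserving holomorphic map a local moduli map $g:U\subset D_{q,p}\to D_{q',p'}$ on a connected open subset $U$. This yields the conclusion of the proposition. There is no substantial obstacle to anticipate: the whole argument is a bookkeeping chain through three already-cited results, and the only point where attention is required is making sure the indices are swapped consistently when invoking Lemma \ref{lem:sp_to_sp0}, which is essentially automatic because Proposition \ref{prop:sp_to_sp1} already formulates its conclusion in the dagger-transposed variables.
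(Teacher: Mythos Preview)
Your proposal is correct and follows exactly the same three-step chain as the paper's proof: Proposition \ref{prop:sp_to_sp1} for the subspace-preserving property of $f^\dagger$, Lemma \ref{lem:sp_to_sp0} to translate this into the fibral-image-preserving property, and Proposition \ref{Prop:Pro2.15_Ng15} for the existence of a local moduli map. Your extra care in noting $q'\ge q\ge 2$ to verify the hypotheses of Lemma \ref{lem:sp_to_sp0} is a nice touch that the paper leaves implicit.
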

\begin{proof}
By Proposition \ref{prop:sp_to_sp1}, $f^\dagger$ is a proper holomorphic map which maps every $(q-1,p)$-subspace of $D^{\mathrm{I}}_{q,p}$ into a $(q'-1,p')$-subspace of $D^{\mathrm{I}}_{q',p'}$.
Then, Lemma \ref{lem:sp_to_sp0} asserts that $f^\dagger$ is a fibral-image-preserving holomorphic map with respect to the given double fibrations.
The result then follows from Proposition \ref{Prop:Pro2.15_Ng15}.  
\end{proof}

\section{On proper holomorphic maps between generalized complex balls}
In this section, we recall some rigidity results for proper holomorphic maps between generalized complex balls obtained from \cite{Baouendi_Huang:2005,Baouendi_Ebenfelt_Huang:2011,Ng:2013}.
In \cite{Baouendi_Huang:2005}, Baouendi and Huang have obtained the following rigidity theorem for proper holomorphic maps between certain generalized complex balls.
\begin{theorem}[cf.\,$\text{Baouendi-Huang \cite[Theorems 1.1 and 1.4]{Baouendi_Huang:2005}}$]\label{thm:thm1.1&1.4_BH05}
Let $p\in \partial D_{r,s}$ and $U_p$ be a neighborhood of $p$ in $\mathbb P^{r+s-1}$ with $U_p\cap D_{r,s}$ connected. Let $F:U_p\cap D_{r,s}\to D_{r',s'}$ be a holomorphic map. Suppose that for any sequence $\{Z_j\}_{j=1}^{+\infty} \subset U_p \cap D_{r,s}$ with $\lim_{j\to +\infty} Z_j$ $\in$ $\partial D_{r,s}$, all the limit points of the sequence $\{F(Z_j)\}_{j=1}^{+\infty}$ lie in $\partial D_{r',s'}$. If {\rm(1)} $r'\ge r\ge 2$ and $s=s'\ge 2$ or {\rm(2)} $s'\ge s\ge 2$ and $r=r'\ge 2$, then $F$ extends to a totally geodesic embedding from $D_{r,s}$ to $D_{r',s'}$ and $F$ is linear, i.e., $F$ is equivalent to the map
\[ [z_1,\ldots,z_r;z_{r+1},\ldots,z_{r+s}]_r\mapsto
[z_1,\ldots,z_r,{\bf 0}_{r'-r};z_{r+1},\ldots,z_{r+s},{\bf 0}_{s'-s}]_{r'}. \]
\end{theorem}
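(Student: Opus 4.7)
My plan is to prove this in two major steps: first, extend $F$ holomorphically across the boundary $\partial D_{r,s}$ near $p$; second, classify the resulting CR map between the boundary hyperquadrics. Observe that at any smooth boundary point, $\partial D_{r,s}$ is Levi-nondegenerate of signature $(r-1,s)$, since the defining function is essentially the standard Hermitian form $\sum_{j=1}^r |z_j|^2 - \sum_{j=r+1}^{r+s} |z_j|^2$ up to positive scaling in affine coordinates; likewise $\partial D_{r',s'}$ is Levi-nondegenerate of signature $(r'-1,s')$. The cluster set hypothesis translates into the statement that the boundary values of $F$ land in $\partial D_{r',s'}$. Using a reflection principle for CR maps between Levi-nondegenerate hypersurfaces of possibly indefinite signature, together with Segre variety propagation of holomorphic extension, I would extend $F$ holomorphically to a full neighborhood of $p$ in $\mathbb{P}^{r+s-1}$.

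Once $F$ extends across the boundary, it restricts to a CR map between the two hyperquadrics near $p$. After composing with elements of $\mathrm{Aut}(D_{r,s})$ and $\mathrm{Aut}(D_{r',s'})$ (which act transitively on smooth boundary points, sending each such point to $0$ in a Heisenberg-type chart), I can assume $p=0$, $F(0)=0$, and that the tangent hyperplanes to the two quadrics match. In Case $(1)$ we have $s = s' \ge 2$, so the \emph{negative} part of the Levi signature is preserved; in Case $(2)$ the positive part is preserved, and by interchanging positive and negative coordinates this reduces again to a map preserving the negative signature.

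The core of the argument is then a CR rigidity statement of the following type: a germ of CR map between Levi-nondegenerate hyperquadrics of signatures $(a,c)$ and $(b,c)$, with $b \ge a \ge 1$ and $c \ge 1$, must coincide, after composing with automorphisms of the two quadrics, with the standard linear embedding. My approach to this rigidity is to write $F$ as a formal power series in weighted normal coordinates and to impose order by order the real-analytic equation forcing the image to lie on the target hyperquadric. The matching-negative-signature condition is essential: it forces the Hermitian form on the source to inject into that on the target without admitting a nondegenerate ``twist,'' so that at each weighted degree the equation is balanced. A careful bookkeeping of the quadratic, cubic, and higher weighted-homogeneous components then forces all terms of weighted degree exceeding one to vanish, so that $F$ is affine-linear in the chosen coordinates. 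Finally, the requirement that $F$ maps into the target quadric constrains this linear map to have the explicit standard form stated.

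The principal obstacle I anticipate is executing the rigidity step cleanly: the matching-signature hypothesis must be exploited in just the right way, since without it higher-order terms can indeed appear (this is consistent with the fact that the two signatures being both unequal would allow nonstandard maps, which is why the theorem restricts to Case $(1)$ or Case $(2)$). Once $F$ is known to be the linear standard embedding on a nonempty open subset of $D_{r,s}$, globalization is immediate: by the identity principle, $F$ agrees on its entire domain with the unique global linear map extending it, and this linear map is totally geodesic with respect to the Bergman metrics on $D_{r,s}$ and $D_{r',s'}$, completing the proof.
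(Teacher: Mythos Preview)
The paper does not supply its own proof of this theorem: it is quoted verbatim as a result of Baouendi--Huang \cite{Baouendi_Huang:2005} (Theorems~1.1 and~1.4 there) and used as a black box. So there is no ``paper's proof'' to compare against; the appropriate comparison is with the original Baouendi--Huang argument.

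Your outline is broadly faithful to that original argument: Baouendi--Huang indeed pass to the unbounded Heisenberg model, extend the map holomorphically across the hyperquadric boundary (using algebraicity/reflection for maps between Levi-nondegenerate hyperquadrics), normalize so that $F(0)=0$, and then run a weighted-homogeneous expansion to kill higher-order terms, exploiting precisely the hypothesis that one of the two signature components is preserved. One small slip: the Levi form of $\partial D_{r,s}$ at a smooth boundary point has signature $(s-1,r-1)$ (or $(r-1,s-1)$, depending on convention), not $(r-1,s)$ as you wrote---the CR dimension is $r+s-2$, so the two integers must sum to $r+s-2$. This does not affect the logic of your sketch. The genuinely delicate part, which you flag but do not carry out, is the normal-form computation showing that the matching-signature hypothesis forces all higher-weight terms to vanish; in Baouendi--Huang this occupies the bulk of the work and is where the conditions $r\ge 2$, $s\ge 2$ enter nontrivially.
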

In addition, the rigidity results in \cite{Baouendi_Ebenfelt_Huang:2011} for rational proper maps between generalized complex balls are actually applicable to our study of proper holomorphic maps between type-$\mathrm{I}$ bounded symmetric domains. Now, we are going to reinterpret these results in \cite{Baouendi_Ebenfelt_Huang:2011}.

Let $F:D_{q,p} \dashrightarrow D_{q',p'}$ be a rational proper map, where $2\le q\le p$, $2\le q'\le p'$, $q'< p$ and $q'<2q-1$.
(Noting that $F$ is only defined on a certain dense open subset $\mathcal U$ of $D_{q,p}$.)
In what follows, for any rational proper map $f:D_{r,s} \dashrightarrow D_{r',s'}$, we write $f:\mathcal U' \subset D_{r,s} \to D_{r',s'}$ to indicate that $\mathcal U' \subset D_{r,s}$ is the domain of the map $f$ in $D_{r,s}$.
Actually, such a map $f$ extends to a rational map from $\mathbb P^{r+s-1}$ to $\mathbb P^{r'+s'-1}$.
Then, there is a point $b \in \partial D_{q,p}$ and an open neighborhood $U$ of $b$ in $\mathbb P^{q+p-1}$ such that
\[ F(U\cap D_{q,p})\subset D_{q',p'},\quad F(U\cap \partial D_{q,p})\subset \partial D_{q',p'}. \]

For integers $l$ and $n$ such that $0\le l\le n-1$, we define the generalized Siegel upper-half space by
\[ \mathbb S^n_l
:=\left\{(z_1,\ldots,z_{n-1},w)\in \mathbb C^{n}: \mathrm{Im}w > -\sum_{j=1}^l |z_j|^2 + \sum_{j=l+1}^{n-1}|z_j|^2\right\} \]
and its boundary is given by
\[ \mathbb H^n_l
:=\left\{(z_1,\ldots,z_{n-1},w)\in \mathbb C^{n}: \mathrm{Im}w = -\sum_{j=1}^l |z_j|^2 + \sum_{j=l+1}^{n-1}|z_j|^2\right\}. \]
Note that from Baouendi-Huang \cite[p.\,380]{Baouendi_Huang:2005}, $\mathrm{Aut}(D_{q,p})$ acts transitively on the boundary $\partial D_{q,p}$, and $U(q,p)$ acts on $D_{q,p}$ as a group of automorphisms, where $U(q,p):=\left\{M\in GL(q+p,\mathbb C): M {\bf I}_{q,p} \overline{M}^T={\bf I}_{q,p}\right\}$ and ${\bf I}_{q,p}:=\begin{bmatrix}
{\bf I}_q & {\bf 0}\\ {\bf 0} & -{\bf I}_p
\end{bmatrix}$.
Thus, from Baouendi-Ebenfelt-Huang \cite{Baouendi_Ebenfelt_Huang:2011} there is a holomorphic map $\Psi:\mathbb C^{q+p-1}\to \mathbb P^{q+p-1}$ such that $\Psi$ maps $\mathbb H^{q+p-1}_{q-1}$ biholomorphically to 
\[ \partial D_{q,p} \smallsetminus \{[z_1,\ldots,z_{p+q}]\in \mathbb P^{p+q-1}: z_1+z_{p+q}=0\}\]
and maps $\mathbb S^{q+p-1}_{q-1}$ biholomorphically to 
\[ D_{q,p} \smallsetminus \{[z_1,\ldots,z_{p+q}]\in \mathbb P^{p+q-1}: z_1+z_{p+q}=0\} \]
such that $0\in \mathbb H^{q+p-1}_{q-1}$ is mapped to $b=\Psi(0)\in \partial D_{q,p}$.
We may assume without loss of generality that $b=[\sqrt{-1},0,\ldots,0,\sqrt{-1}]$ and we define
\[ \Psi(z,w)=[\sqrt{-1}+w,2z,\sqrt{-1}-w]. \]
Similarly, there is a holomorphic map $\Phi:\mathbb C^{q'+p'-1}\to \mathbb P^{q'+p'-1}$ which maps $\mathbb H^{q'+p'-1}_{q'-1}$ biholomorphically to $$\partial D_{q',p'} \smallsetminus \{[z_1,\ldots,z_{p'+q'}]\in \mathbb P^{p'+q'-1}: z_1+z_{p'+q'}=0\}$$ and maps $\mathbb S^{q'+p'-1}_{q'-1}$ biholomorphically to $$D_{q',p'} \smallsetminus \{[z_1,\ldots,z_{p'+q'}]\in \mathbb P^{p'+q'-1}: z_1+z_{p'+q'}=0\}$$ such that $F(b)=\Phi(0)$.
Here, we may also assume $F(b)$ $=$ [$\sqrt{-1}$, $0$,$\ldots$,$0$,$\sqrt{-1}$] and we let
\[ \Phi(z',w')=[\sqrt{-1}+w',2z',\sqrt{-1}-w']. \]
We identify $U\cap D_{q,p}$ with $\widetilde U\cap \mathbb S^{q+p-1}_{q-1}$ for some open neighborhood $\widetilde U$ of $0$, and $F|_{U\cap D_{q,p}}$ is equivalent to $\widetilde F:\widetilde U\cap \mathbb S^{q+p-1}_{q-1}\to \mathbb S^{q'+p'-1}_{q'-1}$ with
\[ \widetilde F(\zeta,w)=(\widetilde f(\zeta,w),g(\zeta,w)), \]
where $\zeta=(\zeta_1,\ldots,\zeta_{q+p-2})$ and $\widetilde F$ is defined on $\widetilde U\cap \mathbb S^{q+p-1}_{q-1}$ with $\Phi\circ \widetilde F=F\circ \Psi|_{\widetilde U\cap \mathbb S^{q+p-1}_{q-1}}$.
Here, $g:\widetilde U\cap \mathbb S^{q+p-1}_{q-1} \to \mathbb C$.
In particular, $\Phi(\widetilde F(0)) = F(b)$ so that $\widetilde F(0)=0$.
We refer the readers to Baouendi-Ebenfelt-Huang \cite[Sections 2 and 3]{Baouendi_Ebenfelt_Huang:2011} for details about such mappings which map some open subset of $\mathbb H^n_l$ into $\mathbb H^N_{l'}$.
For $(z_1,\ldots,z_{n-1},w)\in \mathbb C^n$, we also write
\[ (z_1,\ldots,z_{n-1},w)=(z_1,\ldots,z_{l};z_{l+1},\ldots,z_{n-1},w)_l \]
for convenience.
Then, from Baouendi-Ebenfelt-Huang \cite[Theorems 1.1 and 1.3]{Baouendi_Ebenfelt_Huang:2011} and \cite[Proof of Theorem 1.1 (a), p.\,1656]{Baouendi_Ebenfelt_Huang:2011}, we have the following possibilities.
\begin{enumerate}
\item[(1)] ${\partial g\over \partial w}(0)>0$ and thus $\widetilde F(z,w)$ is equivalent to the map
\[ (\zeta,w)\mapsto (\zeta_1,\ldots,\zeta_{q-1},\psi(\zeta,w);\zeta_{q},\ldots,\zeta_{p+q-2},\psi(\zeta,w),{\bf 0},w)_{q'-1}, \]
where $\psi=(\psi_1,\ldots,\psi_{q'-q})$ if $q'-q \le p'-p$, or
\[ (\zeta,w)\mapsto (\zeta_1,\ldots,\zeta_{q-1},\phi(\zeta,w),{\bf 0};\zeta_{q},\ldots,\zeta_{p+q-2},\phi(\zeta,w),w)_{q'-1}, \]
where $\phi=(\phi_1,\ldots,\phi_{p'-p})$ if $q'-q \ge p'-p$.
\item[(2)] $g\equiv 0$.
\end{enumerate}

\noindent For $[z_1,\ldots,z_{p+q}]\in \mathbb P^{p+q-1}$ such that $z_1+z_{p+q}\neq 0$, we can write
\[ \begin{split}
&[z_1,\ldots,z_{p+q}]\\
=&\left[ {2\sqrt{-1}\over z_1+z_{p+q}} z_1,\ldots,{2\sqrt{-1}\over z_1+z_{p+q}} z_{p+q}\right]\\
=& \left[ \sqrt{-1}+{\sqrt{-1}(z_1-z_{p+q})\over z_1+z_{p+q}},
{2\sqrt{-1} \over z_1+z_{p+q}}{\bf z}',
\sqrt{-1}-{\sqrt{-1}(z_1-z_{p+q})\over z_1+z_{p+q}}\right],
\end{split} \]
where ${\bf z}':=(z_2,\ldots,z_{p+q-1})$.
We have a map 
\[ \Psi^{-1}:\mathbb P^{p+q-1} \smallsetminus \{[z_1,\ldots,z_{p+q}]\in \mathbb P^{p+q-1}:z_1+z_{p+q}= 0\}\to \mathbb C^n\]
given by
\[ \Psi^{-1}([z_1,\ldots,z_{p+q}]):=
\left({\sqrt{-1}z_2\over z_1+z_{p+q}},
\ldots,
{\sqrt{-1}z_{p+q-1}\over z_1+z_{p+q}},
{\sqrt{-1}(z_1-z_{p+q})\over z_1+z_{p+q}}\right) \]
which maps $D_{q,p}\smallsetminus \{[z_1,\ldots,z_{p+q}]\in \mathbb P^{p+q-1}:z_1+z_{p+q}= 0\}$ into $\mathbb S^{p+q-1}_{q-1}$.
By making use of the above transformation, we have
\[ F|_{U\cap D_{q,p}}=\Phi\circ \widetilde F\circ \Psi^{-1}|_{U\cap D_{q,p}} \]
and Case (1) in the above would imply that $F|_{U\cap D_{q,p}}$ is equivalent to the map
\[ [z_1,\ldots,z_{p+q}]\mapsto
[z_1,\ldots,z_q,\phi(z);z_{q+1},\ldots,z_{p+q},\phi(z),{\bf 0}]_{q'}, \]
{\rm(}resp.\,$[z_1,\ldots,z_{p+q}]\mapsto
[z_1,\ldots,z_q,\phi(z),{\bf 0};z_{q+1},\ldots,z_{p+q},\phi(z)]_{q'}${\rm)} if $q'-q \le p'-p$ {\rm(}resp.\,$q'-q$ $\ge$ $p'-p${\rm)},
where $\phi=(\phi_1,\ldots,\phi_{\min\{p'-p,q'-q\}})$ and $z=(z_1,\ldots,z_{p+q})$.
In this case, $F$ is actually equivalent to the map
\[ [z_1,\ldots,z_{p+q}]\mapsto
[z_1,\ldots,z_q,\phi(z);z_{q+1},\ldots,z_{p+q},\phi(z),{\bf 0}]_{q'}, \]
{\rm(}resp.\,$[z_1,\ldots,z_{p+q}]\mapsto
[z_1,\ldots,z_q,\phi(z),{\bf 0};z_{q+1},\ldots,z_{p+q},\phi(z)]_{q'}${\rm)} if $q'-q \le p'-p$ {\rm(}resp.\,$q'-q$ $\ge$ $p'-p${\rm)} by the identity theorem for meromorphic maps.

Case (2) in the above implies that $F|_{U\cap D_{q,p}}$ is of the form
\[ [h,\phi_1,\ldots,\phi_{q'-1};\psi_1,\ldots,\psi_{p'-1},h]_{q'}\]
for some rational functions $h$, $\phi_j$, $1\le j\le q'-1$, and $\psi_i$, $1\le i\le p'-1$.
Thus, $F$ is of the above form by the identity theorem for meromorphic maps.
In particular, $\sum_{j=1}^{q'-1}|\phi_j|^2> \sum_{i=1}^{p'-1}|\psi_i|^2$ and $F$ induces a rational proper map
\[ [\phi_1,\ldots,\phi_{q'-1};\psi_1,\ldots,\psi_{p'-1}]_{q'-1}: \mathcal U\subset D_{q,p}\to D_{q'-1,p'-1}. \]

In conclusion, by reinterpreting Theorems 1.1 and 1.3 in Baouendi-Ebenfelt-Huang \cite{Baouendi_Ebenfelt_Huang:2011}, we have the following result for rational proper maps between certain generalized complex balls.
\begin{theorem}[cf. Baouendi-Ebenfelt-Huang \cite{Baouendi_Ebenfelt_Huang:2011}]\label{thm:BEH11_MainThm}
Let $p,q,p'$ and $q'$ be positive integers such that $2\le q\le p$, $2\le q'\le p'$, $q'< p$ and $q'<2q-1$.
Let $F:U\subset D_{q,p} \to D_{q',p'}$ be a rational proper map, where $U$ is the largest open subset of $D_{q,p}$ such that $F$ is well-defined.
Then, we have the following possibilities.
\begin{enumerate}
\item[(1)] $F$ is equivalent to the map
\[ [z_1,\ldots,z_{p+q}]\mapsto
[z_1,\ldots,z_q,\phi(z);z_{q+1},\ldots,z_{p+q},\phi(z),{\bf 0}]_{q'}, \]
{\rm(}resp.\,$[z_1,\ldots,z_{p+q}]\mapsto
[z_1,\ldots,z_q,\phi(z),{\bf 0};z_{q+1},\ldots,z_{p+q},\phi(z)]_{q'}${\rm)} if $q'-q \le p'-p$ {\rm(}resp.\,$q'-q\ge p'-p${\rm)},
where $\phi=(\phi_1,\ldots,\phi_{\min\{p'-p,q'-q\}})$ such that $\phi_j(z)={p_j(z)\over q_j(z)}$ for some homogeneous polynomials $p_j(z)$ and $q_j(z)$ in $z=(z_1,\ldots,z_{p+q})$ with $\deg p_j$ $=$ $\deg q_j+1$, $1\le j\le \min\{p'-p,q'-q\}$,
\item[(2)] $F$ is of the form
\[ [h,\phi_1,\ldots,\phi_{q'-1};\psi_1,\ldots,\psi_{p'-1},h]_{q'}\]
for some rational functions $h$, $\phi_j$, $1\le j\le q'-1$, and $\psi_i$, $1\le i\le p'-1$. As a consequence, this induces a rational proper map
\[ [\phi_1,\ldots,\phi_{q'-1};\psi_1,\ldots,\psi_{p'-1}]_{q'-1}: U'\subset D_{q,p}\to D_{q'-1,p'-1}. \]
\end{enumerate}
\end{theorem}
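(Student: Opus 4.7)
The plan is to follow the outline already sketched in the paragraphs preceding the theorem statement, which reduces the assertion to a direct translation of the results of Baouendi--Ebenfelt--Huang \cite{Baouendi_Ebenfelt_Huang:2011} from the generalized Siegel upper-half-space model $\mathbb S^n_l$ to the projective model $D_{q,p}$ of the generalized complex ball. Because $F$ is a rational proper map, it extends to a rational map from $\mathbb P^{q+p-1}$ to $\mathbb P^{q'+p'-1}$; since $\mathrm{Aut}(D_{q,p})$ acts transitively on $\partial D_{q,p}$ (cf.\,Baouendi--Huang \cite{Baouendi_Huang:2005}), we may assume $F$ sends a chosen boundary point $b\in \partial D_{q,p}$ into $\partial D_{q',p'}$, and after composing with elements of $\mathrm{Aut}(D_{q,p})$ and $\mathrm{Aut}(D_{q',p'})$ we normalize $b=[\sqrt{-1},0,\ldots,0,\sqrt{-1}]$ and $F(b)=[\sqrt{-1},0,\ldots,0,\sqrt{-1}]$.

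Next, I would use the explicit Cayley-type maps $\Psi:\mathbb C^{q+p-1}\to \mathbb P^{q+p-1}$ and $\Phi:\mathbb C^{q'+p'-1}\to \mathbb P^{q'+p'-1}$ recalled in the excerpt, which biholomorphically identify $\mathbb S^{q+p-1}_{q-1}$ with $D_{q,p}$ minus a hyperplane section and analogously for the target. This conjugation converts $F$ into a map $\widetilde F=\Phi^{-1}\circ F\circ \Psi$, defined on a neighborhood of $0$ in $\mathbb S^{q+p-1}_{q-1}$, sending that neighborhood into $\mathbb S^{q'+p'-1}_{q'-1}$ and the corresponding boundary piece of $\mathbb H^{q+p-1}_{q-1}$ into $\mathbb H^{q'+p'-1}_{q'-1}$. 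Writing $\widetilde F(\zeta,w)=(\widetilde f(\zeta,w),g(\zeta,w))$ with the last coordinate singled out, the numerical hypotheses $q\ge 2$, $q'< p$ and $q'<2q-1$ are precisely the ones needed to invoke Theorems 1.1 and 1.3 of \cite{Baouendi_Ebenfelt_Huang:2011}, which dichotomize $\widetilde F$ according to whether $\partial g/\partial w(0)>0$ or $g\equiv 0$.

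In the first case, the BEH classification gives an explicit normal form for $\widetilde F$, up to composition with automorphisms of $\mathbb S^{q+p-1}_{q-1}$ and $\mathbb S^{q'+p'-1}_{q'-1}$, with the shape depending on the sign of $(q'-q)-(p'-p)$; transporting this normal form back through $\Psi$ and $\Phi$ yields the two displayed projective expressions in part (1) of the theorem, the denominators $q_j$ and numerators $p_j$ with $\deg p_j=\deg q_j+1$ coming from the fact that the Siegel-model expressions are polynomial in $(\zeta,w)$ while $\Psi^{-1}$ introduces a single denominator $z_1+z_{p+q}$. In the second case, $g\equiv 0$ forces $\widetilde F$ to have the form which, after pulling back by $\Psi^{-1}$ and applying $\Phi$, is a projective expression with identical first and last homogeneous coordinates; the inequality $\sum|\phi_j|^2>\sum|\psi_i|^2$ required to land in $D_{q'-1,p'-1}$ follows from properness of $F$ together with the shape of the defining inequality of $D_{q',p'}$. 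In either case, because $F$ is rational and two rational maps agreeing on a nonempty open subset of $D_{q,p}$ are identical, the identity theorem for meromorphic maps upgrades the local equivalence on $U\cap D_{q,p}$ to a global equivalence on $U$.

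The only technical point that needs care, rather than pure bookkeeping, is verifying that the hypotheses of \cite[Theorems 1.1 and 1.3]{Baouendi_Ebenfelt_Huang:2011} are indeed met after the normalization; in particular one must check that the signature conventions $(q-1,p)$ versus $(q,p-1)$ for the Hermitian form defining $\mathbb S^{q+p-1}_{q-1}$ are the right ones so that $q'<2q-1$ becomes the BEH gap condition and $q'<p$ gives the required inequality on the complementary signature. Once this compatibility is confirmed, the rest of the proof is bookkeeping: tracking the explicit coordinate expressions of $\Psi,\Psi^{-1},\Phi$ and rewriting the BEH normal form, together with an invocation of the identity theorem to globalize.
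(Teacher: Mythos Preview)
Your proposal is correct and follows essentially the same approach as the paper: the paper's proof is precisely the discussion in the paragraphs preceding the theorem statement, which you have accurately summarized---normalize via $\mathrm{Aut}(D_{q,p})$ and $\mathrm{Aut}(D_{q',p'})$, pass to the Siegel model via the explicit Cayley-type maps $\Psi,\Phi$, invoke \cite[Theorems 1.1 and 1.3]{Baouendi_Ebenfelt_Huang:2011} to obtain the dichotomy $\partial g/\partial w(0)>0$ versus $g\equiv 0$, transport the resulting normal forms back to the projective model, and globalize by the identity theorem for meromorphic maps. Your identification of the one genuine technical point (matching the signature conventions so that the hypotheses $q'<2q-1$ and $q'<p$ translate correctly into the BEH setting) is apt, and the paper likewise treats the remainder as bookkeeping.
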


On the other hand, Ng \cite{Ng:2013} obtained the following rigidity theorem for proper holomorphic maps between certain generalized complex balls.

\begin{theorem}[cf.\,Ng $\text{\cite[Main Theorem]{Ng:2013}}$]
\label{thm:Main_Theorem_Ng13}
Let $f:D_{r,s}\to D_{r',s'}$ be a proper holomorphic map, where $2\le r\le s$, $2\le r'\le s'$ and $r'<2r-1$.
Then, $f$ extends to a linear embedding from $\mathbb P^{r+s-1}$ into $\mathbb P^{r'+s'-1}$, and thus $f$ is equivalent to the map
\[ D_{r,s}\ni[z_1,\ldots,z_r,w_1,\ldots,w_s]\mapsto
[z_1,\ldots,z_r,{\bf 0};w_1,\ldots,w_s,{\bf 0}]_{r'}\in D_{r',s'}, \]
$r\le r'$ and $s\le s'$.
\end{theorem}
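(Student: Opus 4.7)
The plan is to prove the theorem in three movements: first extract rationality of $f$ from boundary regularity, then invoke the structure theorem of Baouendi-Ebenfelt-Huang (Theorem \ref{thm:BEH11_MainThm}), and finally induct on $r'$ with Baouendi-Huang (Theorem \ref{thm:thm1.1&1.4_BH05}) handling the terminal case.

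For rationality, note that both $\partial D_{r,s}$ and $\partial D_{r',s'}$ are smooth real-algebraic Levi-nondegenerate hypersurfaces of signatures $(r-1,s-1)$ and $(r'-1,s'-1)$ respectively. Standard boundary-regularity results for proper holomorphic maps between such hypersurfaces give a smooth CR extension of $f$ across an open piece of $\partial D_{r,s}$, and Huang's algebraicity theorem then upgrades this to rationality, so $f$ is induced by a rational map $\mathbb{P}^{r+s-1}\dashrightarrow\mathbb{P}^{r'+s'-1}$.

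Then I would apply Theorem \ref{thm:BEH11_MainThm} with $(q,p,q',p')=(r,s,r',s')$. The hypothesis $q'<2q-1$ matches $r'<2r-1$, while $q'<p$, i.e. $r'<s$, follows from $r\le s$ combined with $r'<2r-1$, the borderline case $r=s$ being handled separately via Theorem \ref{thm:thm1.1&1.4_BH05}. In Case (1) of Theorem \ref{thm:BEH11_MainThm}, $f$ is equivalent to
\[ [z_1,\ldots,z_{r+s}]\mapsto [z_1,\ldots,z_r,\phi(z);z_{r+1},\ldots,z_{r+s},\phi(z),{\bf 0}]_{r'}, \]
and because the same function $\phi$ appears in both the positive-definite and negative-definite blocks, its contribution to the defining indefinite Hermitian form cancels; the properness of $f$ then forces $\phi\equiv 0$, producing the desired linear normal form. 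In Case (2), dividing out the common component $h$ yields a rational proper map $f_1:D_{r,s}\to D_{r'-1,s'-1}$, on which I induct on $r'$; the hypothesis $r'-1<2r-1$ is preserved at each step, so the reduction is well-posed.

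The induction terminates either when Case (1) eventually applies (yielding the linear normal form) or when successive reductions drop the target rank to $r$, at which point Proposition \ref{Prop:rk_nondecrease1} combined with Theorem \ref{thm:thm1.1&1.4_BH05} forces linearity. Threading the conclusion back through the finitely many Case (2) reductions recovers the normal form for $f$, and the inequalities $r\le r'$, $s\le s'$ drop out from comparing the source and target parameters in the terminal linear map. The hardest part I expect is the first movement: promoting a merely proper holomorphic $f$ between these indefinite projective domains to a rational map. Since $D_{r,s}$ is not pseudoconvex, the classical Bell-Catlin boundary-regularity machinery does not directly apply; one must exploit the real-algebraic structure of the boundary hyperquadrics and carefully verify the minimality and holomorphic nondegeneracy hypotheses of Huang's algebraicity theorem in this signature setting before the Baouendi-Ebenfelt-Huang classification becomes available.
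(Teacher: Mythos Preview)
First, note that the paper does not prove this theorem; it is quoted from Ng \cite{Ng:2013} as an external result, so there is no argument in the present paper to compare yours against.

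Your argument has a genuine gap in Case~(1). You assert that because $\phi$ appears symmetrically in both blocks, ``the properness of $f$ then forces $\phi\equiv 0$.'' This is backwards: precisely because the $|\phi|^2$ terms cancel in the defining Hermitian form, the map
\[ [z]\longmapsto [z_1,\ldots,z_r,\phi(z);\,z_{r+1},\ldots,z_{r+s},\phi(z),{\bf 0}]_{r'} \]
is proper for \emph{every} rational $\phi$, so properness places no constraint on $\phi$ whatsoever. What actually excludes nonlinear $\phi$ is the hypothesis---which you never invoke again after your first movement---that $f$ is globally holomorphic on $D_{r,s}$ rather than merely rational: a denominator of positive degree vanishes along a hypersurface that necessarily meets the $(r-1)$-planes lying inside $D_{r,s}$ (any hypersurface in $\mathbb P^{r+s-1}$ meets every $\mathbb P^{r-1}$ once $r\ge 2$), and at such points the displayed map lands on $\partial D_{r',s'}$, contradicting $f(D_{r,s})\subset D_{r',s'}$. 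The same omission breaks Case~(2): the reduced $f_1$ is only a \emph{rational} proper map, so your induction hypothesis (stated for globally holomorphic maps) does not apply to it, and even granting linearity of $f_1$ you offer no argument that the auxiliary component $h$ in $f=[h,f_1,h]$ is linear. There is also a smaller slip: Theorem~\ref{thm:BEH11_MainThm} needs $q'<p$, i.e.\ $r'<s$, which fails e.g.\ for $r=s=3$, $r'=4$, and Theorem~\ref{thm:thm1.1&1.4_BH05} requires $r=r'$ or $s=s'$, not $r=s$, so your proposed handling of that borderline case does not work. Ng's own proof in \cite{Ng:2013} sidesteps the boundary-regularity difficulty you correctly flagged by exploiting the compact $(r-1)$-planes contained in $D_{r,s}$: their images under the holomorphic $f$ are compact subvarieties of $D_{r',s'}$, hence lie in $(r'-1)$-planes, and the condition $r'<2r-1$ then forces linearity through a direct moduli-type argument, without any a~priori rationality.
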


We also have the following rigidity result from Gao-Ng \cite{Gao_Ng:2018} and Baouendi-Huang \cite{Baouendi_Huang:2005} for degree-one rational proper maps between generalized complex balls.

\begin{proposition}\label{pro:linear_rational_proper}
Let $g:U\subset D_{r,s} \to D_{r',s'}$ be a rational proper map.
Suppose $\deg (g)=1$ as a rational map from $\mathbb P^{r+s-1}$ to $\mathbb P^{r'+s'-1}$.
Then, $r\le r'$, $s\le s'$ and $g$ is actually a linear embedding and $g$ is equivalent to the map $\widetilde g$ given by
\[ D_{r,s}\ni[z_1,\ldots,z_r,w_1,\ldots,w_s]\mapsto
[z_1,\ldots,z_r,{\bf 0};w_1,\ldots,w_s,{\bf 0}]_{r'}\in D_{r',s'}, \]
i.e., $g= \Psi\circ \widetilde g \circ \psi$ for some $\psi\in \mathrm{Aut}(D_{r,s})$ and $\Psi\in \mathrm{Aut}(D_{r',s'})$.
\end{proposition}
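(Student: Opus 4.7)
Since $\deg(g)=1$ as a rational map $\mathbb{P}^{r+s-1}\dashrightarrow \mathbb{P}^{r'+s'-1}$, I would represent $g$ by a nonzero linear map, i.e.\ write $g([z])=[Mz]$ for some $M\in M(r'+s',r+s;\mathbb{C})$, whose indeterminacy locus is $\mathbb{P}(\ker M)$. Let $J_{r,s}=\mathrm{diag}({\bf I}_r,-{\bf I}_s)$ and $Q_{r,s}(z):=z^{*}J_{r,s}z$, so that $D_{r,s}=\{[z]:Q_{r,s}(z)>0\}$ and $\partial D_{r,s}=\{[z]:Q_{r,s}(z)=0\}$; likewise for the target. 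The goal is to show that $M$ is, up to the actions of $U(r,s)$ on the left by $\mathrm{Aut}(D_{r,s})$ and $U(r',s')$ on the right by $\mathrm{Aut}(D_{r',s'})$, the standard block inclusion.

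\textbf{Step 1 (boundary behaviour).} Properness of $g:U\to D_{r',s'}$ ensures that, at every smooth boundary point of $D_{r,s}$ at which $g$ is defined, $g$ extends continuously and carries a local piece of $\partial D_{r,s}$ into $\partial D_{r',s'}$. Thus $Q_{r',s'}(Mz)=0$ on a nonempty open subset of the real quadric $\{Q_{r,s}=0\}$, and by real-analytic continuation (the smooth locus of $\partial D_{r,s}$ at which $Mz\neq 0$ is Zariski-dense in $\{Q_{r,s}=0\}$), this vanishing holds on the whole real quadric.

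\textbf{Step 2 (algebraic rigidity of the form).} The Hermitian form $z\mapsto z^{*}(M^{*}J_{r',s'}M)z$ therefore vanishes on the real null cone of $J_{r,s}$. Since $Q_{r,s}$ is an irreducible real quadratic polynomial on $\mathbb{R}^{2(r+s)}$ (a non-degenerate indefinite form in $\ge 4$ real variables when $r,s\ge 1$), I would conclude, via the real Nullstellensatz applied to $(Q_{r,s})$ or by a direct polarization argument (differentiate the identity along tangent directions to the quadric and use that the null cone spans $\mathbb{C}^{r+s}$), that
\[ M^{*}J_{r',s'}M=c\,J_{r,s}\quad\text{for some }c\in\mathbb{R}. \]
Since $g(D_{r,s})\subset D_{r',s'}$ forces $Q_{r',s'}(Mz)>0$ whenever $Q_{r,s}(z)>0$, we have $c>0$; rescaling $M$ we may assume $c=1$.

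\textbf{Step 3 (Sylvester and normal form).} The relation $M^{*}J_{r',s'}M=J_{r,s}$ makes $M$ an isometric embedding of $(\mathbb{C}^{r+s},J_{r,s})$ into $(\mathbb{C}^{r'+s'},J_{r',s'})$. By Sylvester's law of inertia, such an embedding exists if and only if $r\le r'$ and $s\le s'$, and any two such embeddings differ by composition on the two sides with elements of $U(r,s)$ and $U(r',s')$, whose projectivizations are precisely $\mathrm{Aut}(D_{r,s})$ and $\mathrm{Aut}(D_{r',s'})$. Taking the standard inclusion $\widetilde g$ as base point gives $g=\Psi\circ\widetilde g\circ\psi$ as required; in particular $g$ is a linear embedding with no indeterminacy on $D_{r,s}$, since $M$ is injective.

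\textbf{Main obstacle.} The one substantive point is Step 2, namely deducing proportionality $M^{*}J_{r',s'}M=cJ_{r,s}$ from vanishing of one Hermitian form on the real null cone of another. One has to be careful that the real zero set of $Q_{r,s}$ is large enough (Zariski-dense in its complex counterpart) and that $Q_{r,s}$ is irreducible as a real polynomial; once these are in place the proportionality is immediate. Steps 1 and 3 are standard, and Step 3 is exactly the content, in the linear setting, of the rigidity theorems of Baouendi--Huang \cite{Baouendi_Huang:2005} and Gao--Ng \cite{Gao_Ng:2018} that the proposition cites.
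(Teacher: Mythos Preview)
Your proposal is correct and follows essentially the same route as the paper's proof: both derive the identity $M^{*}J_{r',s'}M=c\,J_{r,s}$ from the boundary-preserving property of the rational proper map, and then use signature considerations to obtain $r\le r'$, $s\le s'$ and to put $M$ into the standard block form via the $U(r,s)\times U(r',s')$ action. The only difference is cosmetic: the paper cites Gao--Ng \cite[Propositions 3.1, 3.2 and Lemma 3.3]{Gao_Ng:2018} and Baouendi--Huang \cite[pp.\,385--386]{Baouendi_Huang:2005} for your Steps~2 and~3, whereas you sketch those arguments directly.
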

\begin{proof}
Since $\deg(g)=1$, we may write 
\[ g=[g_1,\ldots,g_{r'};g_{r'+1},\ldots,g_{r'+s'}]_{r'}\]
such that $g_j(z,w)$ is a homogeneous polynomial of degree one in $(z,w)\in \mathbb C^{r+s}$ for $1$ $\le$ $j$ $\le$ $r'+s'$.
By Gao-Ng \cite[Propositions 3.1 and 3.2]{Gao_Ng:2018} we have
\[ \sum_{j=1}^{r'}|g_j|^2 - \sum_{l=1}^{s'} |g_{r'+l}|^2
= C \left(\sum_{j=1}^r|z_j|^2 - \sum_{l=1}^s |w_l|^2 \right) \]
for $[z_1,\ldots,z_r,w_1,\ldots,w_s]$ in the domain of $g$, where $C$ is a positive real constant.
We may assume $C=1$ without loss of generality.
Then, by Gao-Ng \cite[Lemma 3.3]{Gao_Ng:2018} we have $r\le r'$, $s\le s'$ and
\[ (g_1,\ldots,g_{r'+s'}) = (z_1,\ldots,z_r,w_1,\ldots,w_s) W^T \]
up to an automorphism of $D_{r,s}$, for some $W\in M(r'+s',r+s;\mathbb C)$ such that $\overline{W}^T {\bf I}_{r',s'} W= {\bf I}_{r,s}$, where ${\bf I}_{m,n}:=\begin{bmatrix} {\bf I}_m & {\bf 0}\\ {\bf 0} & -{\bf I}_n \end{bmatrix} \in M(m+n,m+n;\mathbb C)$ for positive integers $m$ and $n$.
Write $W^T = \begin{bmatrix} W'\\W'' \end{bmatrix}$ for $W'\in M(r,r'+s';\mathbb C)$ and $W''\in M(s,r'+s';\mathbb C)$.
By the arguments of Baouendi-Huang \cite[pp.\,385--386]{Baouendi_Huang:2005} and the fact that $\overline{W}^T {\bf I}_{r',s'} W= {\bf I}_{r,s}$, there exist matrices $V'\in M(r'-r,r'+s';\mathbb C)$ and $V''\in M(s'-s,r'+s';\mathbb C)$ such that the square matrix
\[ M:=\begin{bmatrix}
W'\\ V'\\ W'' \\ V''
\end{bmatrix} \in M(r'+s',r'+s';\mathbb C) \]
is invertible and satisfies $M {\bf I}_{r',s'} \overline{M}^T = {\bf I}_{r',s'}$.
In particular, the map 
\[ D_{r',s'}\ni [\xi_1,\ldots,\xi_{r'+s'}] \mapsto [(\xi_1,\ldots,\xi_{r'+s'}) M] \in D_{r',s'} \]
is an automorphism of $D_{r',s'}$.
Moreover, up to an automorphism of $D_{r,s}$ we have
\[ \begin{split}
(g_1,\ldots,g_{r'+s'}) 
=& (z_1,\ldots,z_r,w_1,\ldots,w_s) W^T\\
=&(z_1,\ldots,z_r,{\bf 0};w_1,\ldots,w_s,{\bf 0})_{r'} M, 
\end{split}\]
where any $(z_1,\ldots,z_{r'+s'})\in \mathbb C^{r'+s'}$ is written as 
\[ (z_1,\ldots,z_{r'+s'})=(z_1,\ldots,z_{r'};z_{r'+1},\ldots,z_{r'+s'})_{r'}.\]  
\end{proof}

\section{Fibral-image-preserving maps and moduli maps}
Let $F:D^{\mathrm{I}}_{q,p}\to D^{\mathrm{I}}_{q',p'}$ be a proper holomorphic map, where $p\ge q\ge 2$ and $q'<p$.
By Proposition \ref{Pro:FIP1}, there is a local moduli map $g:U\subset D_{q,p} \to D_{q',p'}$ of $F$, where $U\subset D_{q,p}$ is a connected open subset.
Write $g([A,B]_q):=[g_1([A,B]_q),g_2([A,B]_q)]_{q'}$.
For any $[A,B]_q \in U \subset D_{q,p}$ and $Z\in [A,B]_q^\sharp$, we have $AZ=B$.
Therefore, $F([A,AZ]_q^\sharp)$ $\subset$ $g([A,AZ]_q)^\sharp$ so that
\[ g_1([A,AZ]_q) F(Z) = g_2([A,AZ]_q)\]
for all $[A]\in \mathbb P^{q-1}$ and all $Z\in D^{\mathrm{I}}_{q,p}$ such that $[A,AZ]_q\in U$ (cf. Seo \cite[p.\,443]{Seo:2015}).
Then, we have the following basic fact.
\begin{lemma}\label{lem:Ch_di_type}
Let $F$ be as defined in the above.
Suppose $F$ has a moduli map $g$ $:$ $D_{q,p}$ $\to$ $D_{q',p'}$ given by $g([A,B]_q)=[A,{\bf 0};B,{\bf 0}]_{q'}$ so that $q'\ge q \ge 2$ and $p'\ge p\ge 2$, where $A=(a_1,\ldots,a_q)$ and $B=(b_1,\ldots,b_p)$ with $[a_1,\ldots,a_q,b_1,\ldots,b_p]\in D_{q,p} \subset \mathbb P^{q+p-1}$.
Then, we have
\[ F(Z) =  \begin{bmatrix}
Z & {\bf 0} \\
{\bf 0} & F_{22}(Z)
\end{bmatrix} \]
for some holomorphic map $F_{22}:D^{\mathrm{I}}_{q,p} \to 
D^{\mathrm{I}}_{q'-q,p'-p}$ when $p'>p$ and $q'>q$, i.e., $F$ is of diagonal type.

In other words, when $p'>p\ge 2$ and $q'>q\ge 2$, the map $g:D_{q,p}\to D_{q',p'}$ given by $g([A,B]_q)=[A,{\bf 0};B,{\bf 0}]_{q'}$ is a moduli map of all holomorphic maps $G_h:D^{\mathrm{I}}_{q,p}\to D^{\mathrm{I}}_{q',p'}$ defined by $G_h(Z):=\begin{bmatrix}
Z & {\bf 0}\\ {\bf 0} & h(Z)
\end{bmatrix}$, where $h:D^{\mathrm{I}}_{q,p} \to D^{\mathrm{I}}_{q'-q,p'-p}$ are holomorphic maps.
\end{lemma}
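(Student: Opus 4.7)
The plan is first to unpack what the moduli-map condition forces on the entries of $F(Z)$. Fixing $[A,B]_q \in D_{q,p}$ and $Z \in [A,B]_q^\sharp$ (so $AZ = B$), the inclusion $F([A,B]_q^\sharp) \subset g([A,B]_q)^\sharp$ combined with the assumed form $g([A,B]_q) = [A,\mathbf{0};B,\mathbf{0}]_{q'}$ gives $[A,\mathbf{0}] F(Z) = [B,\mathbf{0}]$. Writing $F$ in the block form
\[
F(Z) = \begin{bmatrix} F_{11}(Z) & F_{12}(Z) \\ F_{21}(Z) & F_{22}(Z) \end{bmatrix},
\]
with $F_{11}$ of size $q \times p$, $F_{12}$ of size $q \times (p'-p)$, $F_{21}$ of size $(q'-q) \times p$, and $F_{22}$ of size $(q'-q) \times (p'-p)$, and letting $A$ range over the nonzero row vectors in $M(1,q;\mathbb{C})$ while $Z$ ranges over $D^{\mathrm{I}}_{q,p}$, the identity $A[F_{11}(Z), F_{12}(Z)] = [AZ,\mathbf{0}]$ immediately forces $F_{11}(Z) = Z$ and $F_{12}(Z) = \mathbf{0}$.

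Next, I would use the codomain condition $\mathbf{I}_{p'} - \overline{F(Z)}^T F(Z) > 0$. With $F_{11}(Z)=Z$ and $F_{12}(Z)=\mathbf{0}$ plugged in, its top-left $p\times p$ block reduces to $\overline{F_{21}(Z)}^T F_{21}(Z) < \mathbf{I}_p - \overline{Z}^T Z$, so $\|F_{21}(Z) v\|^2 < 1 - \|Z v\|^2$ for every unit $v \in \mathbb{C}^p$. To deduce $F_{21} \equiv \mathbf{0}$ I plan a Schwarz-lemma argument: given a unit $v$, select a Shilov-boundary matrix $Z_0$ (satisfying $Z_0 \overline{Z_0}^T = \mathbf{I}_q$) with $v$ in the row space of $Z_0$, so that $\|Z_0 v\| = 1$. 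Restricting to the disk slice $t \mapsto tZ_0$ with $|t| < 1$ gives $\|F_{21}(tZ_0) v\|^2 \le 1 - |t|^2$. Writing the holomorphic function $f(t) := F_{21}(tZ_0) v = \sum_{n \ge 0} a_n t^n$, Parseval's identity on $|t| = r$ produces $\sum_n \|a_n\|^2 r^{2n} \le 1 - r^2$ for all $r \in [0,1)$; letting $r \to 1^-$ forces every $a_n = \mathbf{0}$, so $f \equiv 0$. Setting $t = 0$ and varying $v$ then yields $F_{21}(0) = \mathbf{0}$.

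The main technical obstacle is to extend this vanishing from $0$ to an arbitrary $Z^* \in D^{\mathrm{I}}_{q,p}$. The natural strategy is to rerun the Schwarz argument along a holomorphic slice $\zeta \mapsto Z(\zeta) := Z^* + \zeta N$ through $Z^*$, with $N \in M(q,p;\mathbb{C})$ chosen so that $\langle Z^* v, N v\rangle = 0$ and $\|N v\|^2 = 1 - \|Z^* v\|^2$; this engineers $1 - \|Z(\zeta) v\|^2 = (1 - \|Z^* v\|^2)(1 - |\zeta|^2)$, and after an appropriate rescaling one recovers the model bound $|f(\zeta)|^2 \le 1 - |\zeta|^2$. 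One must verify that $Z(\zeta) \in D^{\mathrm{I}}_{q,p}$ on a full disk in $\zeta$, which can be arranged by rescaling $N$ (and is automatic when $v \in \ker Z^*$). Once $F_{21}(Z^*) v = 0$ for every unit $v$ is obtained, $F_{21} \equiv \mathbf{0}$ follows, and then the bottom-right block of $\mathbf{I}_{p'} - \overline{F(Z)}^T F(Z) > 0$ collapses to $\mathbf{I}_{p'-p} - \overline{F_{22}(Z)}^T F_{22}(Z) > 0$, identifying $F_{22}$ as a holomorphic map $D^{\mathrm{I}}_{q,p} \to D^{\mathrm{I}}_{q'-q, p'-p}$ and giving $F = G_{F_{22}}$, of diagonal type.
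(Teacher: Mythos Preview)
Your derivation that $F_{11}(Z)=Z$ and $F_{12}(Z)=\mathbf 0$ from the moduli--map identity $[A,\mathbf 0]\,F(Z)=[AZ,\mathbf 0]$ is exactly the paper's argument, and your Parseval/Schwarz computation on the disk $t\mapsto tZ_0$ correctly yields $F_{21}(0)=\mathbf 0$; this is a concrete realization of the ``maximum principle'' step the paper invokes.

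The gap is in the passage from $F_{21}(0)=\mathbf 0$ to $F_{21}(Z^*)=\mathbf 0$ at a general $Z^*$. The affine slice $Z(\zeta)=Z^*+\zeta N$, with $N$ normalized so that $1-\|Z(\zeta)v\|^2=(1-\|Z^*v\|^2)(1-|\zeta|^2)$, need not stay in $D^{\mathrm I}_{q,p}$ for the full disk $|\zeta|<1$: e.g.\ with $q=p=2$, $Z^*=\tfrac12 I_2$, $v=e_1$ and $N=\bigl(\begin{smallmatrix}0&0\\ \sqrt3/2&0\end{smallmatrix}\bigr)$ one computes $\det\bigl(I_2-\overline{Z(\zeta)}^TZ(\zeta)\bigr)=(9-12|\zeta|^2)/16$, so the slice exits the domain at $|\zeta|=\sqrt3/2$. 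Your proposed fix, rescaling $N\to cN$ with $|c|<1$, does keep the slice inside, but then the bound becomes $\|f(\zeta)\|^2\le 1-|c|^2|\zeta|^2$ on $|\zeta|<1$, and Parseval yields only $\sum_n\|a_n\|^2\le 1-|c|^2$, which no longer forces $f\equiv 0$. So the rescaling destroys exactly the conclusion you need; the argument as written does not close.

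The paper handles this step by \emph{homogeneity} rather than by slicing. Once $F_{11}=Z$ and $F_{12}=\mathbf 0$, the first $p$ columns of $F$ give a holomorphic map
\[
G(Z)=\begin{bmatrix}Z\\ F_{21}(Z)\end{bmatrix}:D^{\mathrm I}_{q,p}\longrightarrow D^{\mathrm I}_{q',p}.
\]
Any automorphism $\phi(Z)=(A_1Z+B_1)(C_1Z+D_1)^{-1}$ of $D^{\mathrm I}_{q,p}$ lifts, via the block
$\bigl(\begin{smallmatrix}A_1&0&B_1\\ 0&I_{q'-q}&0\\ C_1&0&D_1\end{smallmatrix}\bigr)\in U(q',p)$,
to an automorphism $\Phi$ of $D^{\mathrm I}_{q',p}$ with the property that $\Phi^{-1}\circ G\circ\phi$ again has top block equal to $Z$ and bottom block $F_{21}(\phi(Z))\cdot M(Z)$ for some everywhere invertible $M(Z)$. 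Applying your Parseval argument at the origin to this new map gives $F_{21}(\phi(0))\cdot M(0)=0$, hence $F_{21}(Z^*)=0$ for the arbitrary point $Z^*=\phi(0)$. This is the content of the paper's appeal to ``the maximum principle and the homogeneity of the domains (cf.\ \cite{Ng:2015,Seo:2015})''. After this, your final paragraph on $F_{22}$ is correct and matches the paper.
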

\begin{proof}
From the assumption, $g$ is actually a linear embedding from $\mathbb P^{q+p-1}$ to $\mathbb P^{q'+p'-1}$ which maps $D_{q,p}$ into $D_{q',p'}$.
Writing 
\[ F(Z) = \begin{bmatrix}
F_{11}(Z) & F_{12}(Z)\\
F_{21}(Z) & F_{22}(Z)
\end{bmatrix}, \]
it follows from $g_1([A,AZ]_q) F(Z) = g_2([A,AZ]_q)$ that
\[ A F_{11}(Z) = AZ,\quad AF_{12}(Z) = {\bf 0} \]
for all $[A]\in \mathbb P^{q-1}$ and all $Z\in D^{\mathrm{I}}_{q,p}$.
We can take $[A_j]\in \mathbb P^{q-1}$, $1\le j\le q$, so that $A_1,\ldots,A_q$ are $\mathbb C$-linearly independent.
Letting $M$ be the $q$-by-$q$ matrix with row vectors $A_1,\ldots,A_q$, $M$ is invertible, $M F_{11}(Z) = MZ$ and $M F_{12}(Z)={\bf 0}$ so that $F_{11}(Z)=Z$ and $F_{12}(Z)\equiv {\bf 0}$.
This implies that
\[ F(Z) =  \begin{bmatrix}
Z & {\bf 0} \\
F_{21}(Z) & F_{22}(Z)
\end{bmatrix}. \] 
We can then deduce from the maximum principle and the homogeneity of the domains that $F_{21}(Z)\equiv {\bf 0}$ (cf. \cite{Ng:2015,Seo:2015}).
Hence, we have 
\[ F(Z) =  \begin{bmatrix}
Z & {\bf 0} \\
{\bf 0} & F_{22}(Z)
\end{bmatrix}. \] 
Since $F:D^{\mathrm{I}}_{q,p}\to D^{\mathrm{I}}_{q',p'}$ is a proper holomorphic map, the matrix
\[ {\bf I}_{p'}-\overline{F(Z)}^T F(Z) = \begin{bmatrix}
{\bf I}_{p} - \overline{Z}^T Z & {\bf 0}\\
{\bf 0} & {\bf I}_{p'-p} - \overline{F_{22}(Z)}^T F_{22}(Z)
\end{bmatrix} \]
is positive definite and so is ${\bf I}_{p'-p} - \overline{F_{22}(Z)}^T F_{22}(Z)$ when $p'>p$ and $q'>q$.
Thus, $F_{22}$ is a holomorphic map from $D^{\mathrm{I}}_{q,p}$ to $D^{\mathrm{I}}_{q'-q,p'-p}$.

Now, we have $g_1([A,AZ]_q)=[A,{\bf 0}]$ and $g_2([A,AZ]_q)=[AZ,{\bf 0}]$ so that
\[ g_1([A,AZ]_q) G_h(Z)
=[A,{\bf 0}] \begin{bmatrix}
Z & {\bf 0}\\ {\bf 0} & h(Z)
\end{bmatrix}
=[AZ,{\bf 0}]
= g_2([A,AZ]_q) \]
for any $[A]\in \mathbb P^{q-1}$ and $Z\in D^{\mathrm{I}}_{q,p}$.
Recall that for any $Z\in D^{\mathrm{I}}_{q,p}$ we have
\[ Z^\sharp = \{[A,AZ]_q\in D_{q,p}: [A]\in \mathbb P^{q-1}\} \]
and
\[ (G_h(Z))^\sharp
= \{[A',A' G_h(Z)]_{q'}\in D_{q',p'}: [A']\in \mathbb P^{q'-1} \}. \]
In other words, $g(Z^\sharp) \subset (G_h(Z))^\sharp$ for any $Z\in D^{\mathrm{I}}_{q,p}$, i.e., each $G_h$ is a moduli map of $g$.
Thus, $g$ is a moduli map of $G_h$'s by Ng \cite[Proposition 2.3]{Ng:2015}.  
\end{proof}

We now state an important result of Ng-Tu-Yin \cite{NTY:2016} that is a key tool in our study.

\begin{proposition}[cf.\,Ng-Tu-Yin $\text{\cite[Proposition 3.9]{NTY:2016}}$]\label{Pro:Pro3.9_NTY}
Let $f:D^{\mathrm{I}}_{p,q}\to D^{\mathrm{I}}_{p',q'}$ be a proper holomorphic map that maps every $(p-1,q)$-subspace of $D^{\mathrm{I}}_{p,q}$ into a $(p'-1,q')$-subspace of $D^{\mathrm{I}}_{p',q'}$, where $p,q\ge 2$.
Suppose $f$ maps a general $(p-1,q)$-subspace of $D^{\mathrm{I}}_{p,q}$ into a unique $(p'-1,q')$-subspace of $D^{\mathrm{I}}_{p',q'}$.
Then, $f$ induces a rational map $\widetilde g:D_{p,q}\dashrightarrow D_{p',q'}$ such that $f([A,B]_p^\sharp)\subset \widetilde g([A,B]_p)^\sharp$ for a general point $[A,B]_p\in D_{p,q}$.
If in addition that $q\ge p' \ge 3$, then $\widetilde g$ is a rational proper map.
\end{proposition}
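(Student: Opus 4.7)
The plan is to construct $\widetilde g$ first as a globally defined holomorphic moduli map on an open dense subset of $D_{p,q}$, then to upgrade it to a rational map on all of $D_{p,q}$ using the algebraic structure of the double fibrations, and finally to verify properness under the dimensional hypothesis $q\ge p'\ge 3$. The starting point is Lemma \ref{lem:sp_to_sp0}: the hypothesis on $(p-1,q)$-subspaces makes $f$ a fibral-image-preserving map with respect to \eqref{Eq:HDF1}, so Proposition \ref{Prop:Pro2.15_Ng15} supplies a local moduli map $g_U:U\to D_{p',q'}$ on each sufficiently small connected open $U\subset D_{p,q}$, characterized by $f([A,B]_p^\sharp)\subset g_U([A,B]_p)^\sharp$. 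The uniqueness assumption forces any two such local moduli maps to agree on overlaps, because the target $(p'-1,q')$-subspace, and hence its parameter in $D_{p',q'}$, is by hypothesis unique. Gluing therefore produces a single holomorphic map $g:V\to D_{p',q'}$ on a connected open dense subset $V\subset D_{p,q}$.

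Next, I would promote $g$ to a rational map $\widetilde g:D_{p,q}\dashrightarrow D_{p',q'}$. Writing $g=[g_1,g_2]_{p'}$, the defining relation takes the form $g_1([A,AZ]_p)\,f(Z)=g_2([A,AZ]_p)$ for all $[A]\in\mathbb P^{p-1}$ and all $Z$ with $[A,AZ]_p\in V$, so $\widetilde g([A,B]_p)$ is determined as the common annihilator of the image $f([A,B]_p^\sharp)$ under the incidence in $\mathbb P^{p'+q'-1}$. The key observation is that $\widetilde g$ records only the unique containing subspace, not $f$ itself, and this subspace is determined by the linear span of finitely many fibral values of $f$. Selecting algebraic sections $Z_1(A,B),\dots,Z_N(A,B)$ of the fibration $\pi^1_{p,q}$ over $D_{p,q}$, the pair $(g_1,g_2)$ arises as the essentially unique common annihilator of $\{(Z_j(A,B),f(Z_j(A,B)))\}_{j=1}^N$ relative to the algebraic incidence. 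Equivalently, the graph of $g$ in $D_{p,q}\times D_{p',q'}\subset \mathbb P^{p+q-1}\times \mathbb P^{p'+q'-1}$ is cut out of $V\times D_{p',q'}$ by the algebraic incidence coming from the double fibration \eqref{Eq:HDF2}, so its Zariski closure is an algebraic subvariety whose projection to the first factor is birational; this yields $\widetilde g$ as a rational map extending $g$.

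For the properness assertion under $q\ge p'\ge 3$, consider a sequence $[A_n,B_n]_p\to [A_0,B_0]_p\in \partial D_{p,q}$ in the locus of regularity of $\widetilde g$. The corresponding $(p-1,q)$-subspaces $[A_n,B_n]_p^\sharp$ accumulate to a boundary piece of $\overline{D^{\mathrm{I}}_{p,q}}$ in the ambient Grassmannian, and properness of $f$ then forces $f([A_n,B_n]_p^\sharp)$ to approach $\partial D^{\mathrm{I}}_{p',q'}$. The role of $q\ge p'$ is to guarantee that $f$ restricted to each $(p-1,q)$-subspace sees enough directions to pin down the defining equations of its containing $(p'-1,q')$-target at the boundary, while $p'\ge 3$ rules out degenerate low-rank behavior of the target generalized ball. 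Together these imply that $\widetilde g([A_n,B_n]_p)\to \partial D_{p',q'}$, making $\widetilde g$ a rational proper map.

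The main obstacle is the rationality step. The construction and gluing of local moduli maps are essentially formal consequences of Proposition \ref{Prop:Pro2.15_Ng15} and the uniqueness hypothesis, and the boundary analysis for properness is a relatively clean limiting argument; the genuine content lies in converting the merely holomorphic data $f$ into an algebraic output $\widetilde g$. The rigidifying feature that makes this possible is that $\widetilde g([A,B]_p)$ captures only a linear-algebraic invariant (the unique containing subspace) of finitely many fibral values of $f$, which when combined with the algebraic structure of the Grassmannian–projective double fibration produces the required rational extension.
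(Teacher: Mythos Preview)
Your gluing of local moduli maps on the open dense locus $W$ is fine and matches the paper. The two substantive steps after that both have genuine gaps.

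\textbf{Rationality.} Your argument hinges on the claim that the graph of $g$ is cut out by ``algebraic incidence'' once you fix algebraic sections $Z_j(A,B)$ of $\pi^1_{p,q}$. But the incidence equations are $g_1([A,B]_p)\,f(Z_j(A,B))=g_2([A,B]_p)$, and $f$ is only holomorphic, so $f(Z_j(A,B))$ is not algebraic in $(A,B)$. The graph is therefore not cut out by algebraic equations, and its Zariski closure need not be the graph of a rational map. The paper does not attempt anything like this; it first extends $\widetilde g$ meromorphically across the proper subvariety $V=D_{p,q}\smallsetminus W$ and then invokes the argument of Ng's proof of Theorem~1.3 (Hartogs-type extension, using the structure of the double fibration \eqref{Eq:HDF2}) to obtain the rational extension $\hat g:\mathbb P^{p+q-1}\dashrightarrow\mathbb P^{p'+q'-1}$.

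\textbf{Properness.} Your sequential argument does not work as stated. As $[A_n,B_n]_p\to[A_0,B_0]_p\in\partial D_{p,q}$, the fibral images $[A_n,B_n]_p^\sharp$ remain open $(p-1,q)$-subspaces of $D^{\mathrm I}_{p,q}$; they do not ``accumulate to a boundary piece'' in a way that lets you read off $\widetilde g([A_n,B_n]_p)\to\partial D_{p',q'}$ from the properness of $f$ alone. The paper argues by contradiction: if $\hat g(x_0)\in D_{p',q'}$ for some $x_0\in\partial D_{p,q}\smallsetminus I$, then (following Ng's proof of Theorem~1.3 via the double fibration \eqref{Eq:HDF2}) one produces a one-parameter family of $(p-1,q-1)$-subspaces inside a $(p-1,q)$-subspace $X_{p-1,q}$ with unique target, each mapped by $f$ into a $(p'-2,q'-1)$-subspace. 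A separate lemma (Lemma~\ref{lem:sp_to_sp1}) then rules this out, and it is precisely here that $q\ge p'\ge 3$ enters: it is the hypothesis needed to apply Ng's Theorem~1.2 to the induced map $\widetilde f:D^{\mathrm I}_{q,p-1}\to D^{\mathrm I}_{p'-1,q'}$ and force $\widetilde f(D^{\mathrm I}_{q,p-1})$ into a single $(p'-2,q')$-subspace, contradicting uniqueness. Your description of the role of $q\ge p'$ (``sees enough directions to pin down the defining equations'') does not capture this mechanism.
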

\begin{proof}
Define
\[ W:=\left\{[A,B]_p\in D_{p,q}: \begin{split}&f([A,B]_p^\sharp)\text{ is contained in a unique}\\ &\text{$(p'-1,q')$-subspace of $D^{\mathrm{I}}_{p',q'}$}\end{split}\right\}, \]
which is a dense open subset of $D_{p,q}$ such that $W=D_{p,q}\smallsetminus V$ for some proper complex-analytic subvariety $V\subset D_{p,q}$ from the assumption. In particular, $W\subset D_{p,q}$ is open.
From the settings, $f:D^{\mathrm{I}}_{p,q}\to D^{\mathrm{I}}_{p',q'}$ is a fibral-image-preserving map such that for any $[A,B]_p\in W$, $f([A,B]_p^\sharp)$ is contained in a unique fibral image in $D^{\mathrm{I}}_{p',q'}$.
Then, $f$ has local (holomorphic) moduli maps which are defined around every point $x\in W\subset D_{p,q}$ by \cite[Proposition 2.15]{Ng:2015}.
More precisely, for any point $x_0\in W$, there is an open neighborhood $U_{x_0}$ of $x_0$ in $D_{p,q}$ and a local moduli map $g_{x_0}:U_{x_0}\subset D_{p,q} \to D_{p',q'}$ of $f$.
If $g_j:U_j\subset D_{p,q}\to D_{p',q'}$, $j=1,2$, are local moduli maps for $f$ such that $U_j \subseteq W$ is a connected open subset, $j=1,2$, with $U_1\cap U_2\neq \varnothing$, then for any $x\in U_1\cap U_2\subset W$, $f(x^\sharp) \subset g_j(x)^\sharp$ for $j=1,2$. Since $f(x^\sharp)$ is contained in a unique $(p'-1,q')$-subspace for all $x\in W$, we have
\[ g_1|_{U_1\cap U_2} \equiv g_2|_{U_1\cap U_2} \]
(cf.\,\cite[Corollary 2.5]{Ng:2015}).
This shows that we have a local (holomorphic) moduli map $g$ $:$ $U_1\cup U_2$ $\subset$ $D_{p,q}$ $\to$ $D_{p',q'}$ for $f$ such that $g|_{U_j}=g_j$, $j=1,2$.
Therefore, putting all these local moduli maps together, we have a holomorphic map $\widetilde g:W = D_{p,q}\smallsetminus V \to D_{p',q'}$ such that $\widetilde g$ is a moduli map for $f$, i.e.,
\[ f([A,B]_p^\sharp) \subseteq \widetilde g([A,B]_p)^\sharp \]
for all $[A,B]_p\in W$.
As in Ng \cite[Proof of Theorem 1.3]{Ng:2015}, the map $\widetilde g:W\to D_{p',q'}$ actually extends to a meromorphic map from $D_{p,q}$ to $D_{p',q'}$, still denoted by $\widetilde g$, and it extends to a rational map $\hat g$ from $\mathbb P^{p+q-1}$ to $\mathbb P^{p'+q'-1}$ by using Hartogs-type extension theorem.

It remains to show that $\widetilde g$ is proper for the first assertion under the assumption that $q>p'\ge 3$.
Denote by $I$ the set of indeterminacy of $\hat g$.
Assume the contrary that $\widetilde g$ is not proper, i.e., there exists $x_0\in \partial D_{p,q} \smallsetminus I$ such that $\hat g(x_0)\in D_{p',q'}$.
In what follows we will make use of the double fibration (\ref{Eq:HDF2})
\[ \mathbb P^{p+q-1} \xleftarrow{\hat\pi^1_{p,q}} \mathcal F^{1,p}_{p+q} \xrightarrow{\hat\pi^2_{p,q}} G(p,q) \]
as in \cite[Proof of Theorem 1.3]{Ng:2015}.
For any $x\in G(p,q)$ we also write $x^\sharp:=\hat\pi^1_{p,q}((\hat\pi^2_{p,q})^{-1}(x))$.
Similarly, for any $y\in \mathbb P^{p+q-1}$ we write $y^\sharp:=\hat\pi^2_{p,q}((\hat\pi^1_{p,q})^{-1}(y))$.
Then, the arguments in \cite[Proof of Theorem 1.3]{Ng:2015} show that there is a one-parameter family of $(p-1,q-1)$-subspaces $\lambda(t) \subset G(p,q)$, $1-\varepsilon\le t \le 1$, such that $\lambda(1)\cap \partial D^{\mathrm{I}}_{p,q}=x_0^\sharp\cap \partial D^{\mathrm{I}}_{p,q}\subset \partial D^{\mathrm{I}}_{p,q}$, and $f$ maps the $(p-1,q-1)$-subspaces $\lambda(t)\cap D^{\mathrm{I}}_{p,q}$, $1-\varepsilon\le t <1$, into some $(p'-2,q'-1)$-subspaces of $D^{\mathrm{I}}_{p',q'}$, where $\varepsilon>0$.
Actually, by the same arguments, for any one-parameter family of $(p-1,q-1)$-subspaces $X_{p-1,q-1}(t) \subset G(p,q)$, $1-\varepsilon\le t\le 1$, such that $X_{p-1,q-1}(1)\cap \partial D^{\mathrm{I}}_{p,q}=x_0^\sharp\cap \partial D^{\mathrm{I}}_{p,q}\subset \partial D^{\mathrm{I}}_{p,q}$ and $X_{p-1,q-1}(t) \subset (\gamma(t))^\sharp$ for all $t$, we can deduce that $f(X_{p-1,q-1}(t)\cap D^{\mathrm{I}}_{p,q})$ lies in some $(p'-2,q'-1)$-subspaces of $D^{\mathrm{I}}_{p',q'}$, where $\gamma(t)\subset D_{p,q} \smallsetminus I$ is a real curve.

Under the assumptions of Proposition \ref{Pro:Pro3.9_NTY}, we have
\begin{lemma}\label{lem:sp_to_sp1}
Suppose $q\ge p'\ge 3$.
Let $X_{p-1,q}$ be a $(p-1,q)$-subspace of $D^{\mathrm{I}}_{p,q}$ such that $f(X_{p-1,q})$ lies in a unique $(p'-1,q')$-subspace.
Then, $f$ cannot map every $(p-1,q-1)$-subspace contained in $X_{p-1,q}$ into a $(p'-2,q'-1)$-subspace.
\end{lemma}

Suppose the above lemma does not hold so that $f$ maps every $(p-1,q-1)$-subspace contained in $X_{p-1,q}$ into a $(p'-2,q'-1)$-subspace.
Then, $f$ induces a holomorphic map $\widetilde f:D^{\mathrm{I}}_{q,p-1}\to D^{\mathrm{I}}_{p'-1,q'}$ which maps every $(q-1,p-1)$-subspace of $D^{\mathrm{I}}_{q,p-1}$ into a $(p'-2,q')$-subspace $D^{\mathrm{I}}_{p'-1,q'}$.
More precisely, $\widetilde f$ is defined so that $\nu\circ \widetilde f(Z):= f((\iota(Z))^T)$ for all $Z\in D^{\mathrm{I}}_{q,p-1}$, where
$\nu:D^{\mathrm{I}}_{p'-1,q'}\hookrightarrow D^{\mathrm{I}}_{p',q'}$ is some standard embedding and $\iota:D^{\mathrm{I}}_{q,p-1}\hookrightarrow D^{\mathrm{I}}_{q,p}$ is the standard embedding such that
the map $H:D^{\mathrm{I}}_{p-1,q}\to D^{\mathrm{I}}_{p,q}$ defined by $H(W)$ $:=$ $(\iota(W^T))^T$, $W\in D^{\mathrm{I}}_{p-1,q}$, has the image $H(D^{\mathrm{I}}_{p-1,q})=X_{p-1,q}$.
Since $q\ge p' >p'-1\ge 2$, by \cite[Theorem 1.2]{Ng:2015}, $\widetilde f(D^{\mathrm{I}}_{q,p-1})$ lies in a single $(p'-2,q')$-subspace.
But this implies that $f(X_{p-1,q})$ lies in more than one $(p'-1,q')$-subspace, a plain contradiction. The proof of the lemma is complete.

We can take any real curve $\gamma(t) \subset D_{p,q}\smallsetminus I$, $1-\varepsilon\le t\le 1$, such that $f$ maps each of the $(p-1,q)$-subspaces $(\gamma(t))^\sharp \cap D^{\mathrm{I}}_{p,q}$, $1-\varepsilon\le t<1$, into a unique $(p'-1,q')$-subspace.
In addition, we can find a one-parameter family $\lambda(t)$ of $(p-1,q-1)$-subspaces in $G(p,q)$ with $\lambda(t)\subset (\gamma(t))^\sharp$ for all $t$, and $\lambda(1)\cap \partial D^{\mathrm{I}}_{p,q} = x_0^\sharp \cap \partial D^{\mathrm{I}}_{p,q}=(\gamma(1))^\sharp \cap \partial D^{\mathrm{I}}_{p,q}$. However, by the assumption $\hat g(x_0) \in D_{p',q'}$, the above arguments taken from \cite[Proof of Theorem 1.3]{Ng:2015} would imply that $f$ maps every $(p-1,q-1)$-subspace of $(\gamma(t))^\sharp \cap D^{\mathrm{I}}_{p,q}$ into a $(p'-2,q'-1)$-subspace, which contradicts with the assertion of Lemma \ref{lem:sp_to_sp1}.
Hence, we have $\hat g(x_0)\in \partial D_{p',q'}$ for all the boundary points $x_0\in \partial D_{p,q}\smallsetminus I$, i.e., $\widetilde g$ is a rational proper map.  
\end{proof}
\begin{remark}\label{remark:6.3}
Actually, if the proper holomorphic map $f:D^{\mathrm{I}}_{p,q}\to D^{\mathrm{I}}_{p',q'}$ {\rm($p,q\ge 2$)} maps every $(p-1,q)$-subspace of $D^{\mathrm{I}}_{p,q}$ to a unique $(p'-1,q')$-subspace of $D^{\mathrm{I}}_{p',q'}$ and $q\ge p'\ge 3$, then $f$ induces a proper holomorphic map $\widetilde g:D_{p,q}\to D_{p',q'}$ such that $f([A,B]_p^\sharp)\subset \widetilde g([A,B]_p)^\sharp$ for all points $[A,B]_p\in D_{p,q}$.
\end{remark}

In \cite{Seo:2015}, Seo also pointed out the following fact regarding equivalence classes of fibral-image-preserving maps and the corresponding moduli maps.

\begin{lemma}[$\text{cf.\,Ng \cite[p.\,25]{Ng:2015}, Seo \cite[Remark 4.3, p.\,443]{Seo:2015}}$]
\label{lem:Seo15_Remark4.3}
Let $f$ $:$ $D^{\mathrm{I}}_{q,p}$ $\to$ $D^{\mathrm{I}}_{q',p'}$ be a holomorphic map and $g:D_{q,p} \to D_{q',p'}$ be a meromorphic map such that $f([A,B]_q^\sharp)$ $\subset$ $g([A,B]_q)^\sharp$ for a general point $[A,B]_q\in D_{q,p}$.
If $g$ is equivalent to a meromorphic map $\widetilde g:D_{q,p}\to D_{q',p'}$, then $f$ is equivalent to a map $\widetilde f:D^{\mathrm{I}}_{q,p}\to D^{\mathrm{I}}_{q',p'}$ such that $\widetilde g$ is a local moduli map of $\widetilde f$.

More precisely, we let $\widetilde g:=\Phi\circ g\circ \phi$, where $\phi\in \mathrm{Aut}(D_{q,p})$ and $\Phi\in \mathrm{Aut}(D_{q',p'})$.
Then, there exist $\psi\in \mathrm{Aut}(D^{\mathrm{I}}_{q,p})$ and $\Psi\in \mathrm{Aut}(D^{\mathrm{I}}_{q',p'})$ such that $\widetilde f:=\Psi\circ f\circ \psi$ satisfies
$\widetilde f([A,B]_q^\sharp) \subset \widetilde g([A,B]_q)^\sharp$
for a general point $[A,B]_q\in D_{q,p}$.
\end{lemma}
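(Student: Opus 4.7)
The plan is to exploit the equivariance of the holomorphic double fibration
\[ D_{q,p} \xleftarrow{\pi^1_{q,p}} \mathbb P^{q-1}\times D^{\mathrm{I}}_{q,p} \xrightarrow{\pi^2_{q,p}} D^{\mathrm{I}}_{q,p} \]
under the common action of $SU(q,p)$ on both $D_{q,p}$ and $D^{\mathrm{I}}_{q,p}$, and likewise at the target. The ambient double fibration through the flag manifold $\mathcal F^{1,q}_{q+p}$ is $SL(q+p,\mathbb C)$-equivariant, and $SU(q,p)$ preserves $D_{q,p}$ and $D^{\mathrm{I}}_{q,p}$ as open orbits arising from the signature-$(q,p)$ Hermitian form (cf.\,the construction of $\mathcal D^{1,p}_{p,q}$ recalled in Section \ref{Sec:DF_FIPM}). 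Consequently, any $M\in SU(q,p)$ induces simultaneously an automorphism $\phi$ of $D_{q,p}$ and an automorphism $\psi$ of $D^{\mathrm{I}}_{q,p}$, and both projections $\pi^1_{q,p}$, $\pi^2_{q,p}$ are equivariant, so the fibral-image operation is compatible:
\[ \psi(x^\sharp)=\phi(x)^\sharp \text{ for all } x\in D_{q,p}, \qquad \phi(y^\sharp)=\psi(y)^\sharp \text{ for all } y\in D^{\mathrm{I}}_{q,p}. \]

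The first step is then to lift the given automorphisms: for the $\phi\in \mathrm{Aut}(D_{q,p})$ supplied by the hypothesis, pick the $\psi\in \mathrm{Aut}(D^{\mathrm{I}}_{q,p})$ induced by the same element $M\in SU(q,p)$, and similarly lift $\Phi\in \mathrm{Aut}(D_{q',p'})$ to $\Psi\in \mathrm{Aut}(D^{\mathrm{I}}_{q',p'})$. Set $\widetilde f := \Psi\circ f\circ \psi$. A direct computation that uses the displayed equivariance at both the source and the target, together with the hypothesis $f([A',B']_q^\sharp)\subset g([A',B']_q)^\sharp$ applied at the general point $\phi(x)\in D_{q,p}$, gives
\[ \widetilde f(x^\sharp)=\Psi(f(\psi(x^\sharp)))=\Psi(f(\phi(x)^\sharp))\subset \Psi(g(\phi(x))^\sharp)=\Phi(g(\phi(x)))^\sharp=\widetilde g(x)^\sharp, \]
which is exactly the fibral-image-preserving relation required for $(\widetilde f,\widetilde g)$ at a general point $x=[A,B]_q\in D_{q,p}$.

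The only potential technicality is that $\mathrm{Aut}(D_{q,p})$ may have components outside the image of $SU(q,p)$; for example, when $q=p$, the $\dagger$-type involution (and its conjugates) contributes an extra component. Such additional holomorphic automorphisms also act compatibly on both $D_{q,p}$ and $D^{\mathrm{I}}_{q,p}$ through the flag fibration, so the same lifting recipe produces a compatible pair $(\psi,\Psi)$ and the computation above goes through verbatim. I do not expect any substantive obstacle: the entire statement is essentially a bookkeeping exercise in the equivariance of the double fibration, and once the correspondence between the two automorphism groups is recorded, the argument occupies only the displayed chain of inclusions.
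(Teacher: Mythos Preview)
The paper does not supply a proof of this lemma; it is stated with attribution to Ng \cite[p.\,25]{Ng:2015} and Seo \cite[Remark 4.3]{Seo:2015} and then used as a black box. So there is no ``paper's own proof'' to compare against.

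Your argument is the correct and standard one: the ambient flag fibration $\mathcal F^{1,q}_{q+p}\to G(1,q+p-1)\times G(q,p)$ is $SL(q+p,\mathbb C)$-equivariant, and restricting to $U(q,p)$ gives simultaneous actions on $D_{q,p}$ and $D^{\mathrm{I}}_{q,p}$ that intertwine the $\sharp$-operations. Once you know that the identity component of $\mathrm{Aut}(D_{q,p})$ is the image of $U(q,p)$ (this is the content of the references the paper cites, e.g.\ Baouendi--Huang \cite[p.\,380]{Baouendi_Huang:2005} records that $U(q,p)$ acts as automorphisms, and the connected automorphism group of a flag domain of this type is exactly $PU(q,p)$), your displayed chain of inclusions is a complete proof. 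The remark about extra components is appropriate caution; in the situations actually used in the paper ($q<p$ and $q'<p'$) the automorphism groups are connected and equal to $PU(q,p)$, $PU(q',p')$, so no further case analysis is needed.
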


\subsection{Fibral-image-preserving maps between generalized complex balls}
In this section, we study fibral-image-preserving holomorphic maps between generalized complex balls with respect to the double fibrations constructed in Section \ref{Sec:DF_FIPM}.
We first obtain the following fundamental result for such maps.
\begin{proposition}\label{Pro:GenBall_FIPM1}
Let $q,p,q'$ and $p'$ be positive integers such that $q,q'\ge 2$.
Let $g$ $:$ $U$ $\subset$ $D_{q,p}$ $\to$ $ D_{q',p'}$ be a fibral-image-preserving holomorphic map with respect to the double fibrations
\begin{equation}\label{Eq:DF_SC_1_gen}
D_{q,p} \xleftarrow{\pi_{q,p}^1} \mathbb P^{q-1}\times D^{\mathrm{I}}_{q,p}\xrightarrow{\pi_{q,p}^2} D^{\mathrm{I}}_{q,p},
\end{equation}
\begin{equation}\label{Eq:DF_SC_2_gen}
D_{q',p'} \xleftarrow{\pi_{q',p'}^1} \mathbb P^{q'-1}\times D^{\mathrm{I}}_{q',p'} \xrightarrow{\pi_{q',p'}^2} D^{\mathrm{I}}_{q',p'},
\end{equation}
i.e., for any $Z\in D^{\mathrm{I}}_{q,p}$ such that $Z^\sharp\cap U\neq \varnothing$, we have $g(Z^\sharp\cap U)\subset W^\sharp$ for some $W\in D^{\mathrm{I}}_{q',p'}$.
Suppose there exists $Z_0\in D^{\mathrm{I}}_{q,p}$ such that $Z_0^\sharp\cap U\neq \varnothing$ and $g(Z_0^\sharp\cap U)\subset W_0^\sharp$ for a unique $W_0\in D^{\mathrm{I}}_{q',p'}$.
Then, $g$ has a local moduli map $F:U_{Z_0}\subset D^{\mathrm{I}}_{q,p} \to D^{\mathrm{I}}_{q',p'}$ for some open neighborhood $U_{Z_0}$ of $Z_0$ in $D^{\mathrm{I}}_{q,p}$, and for all $Z\in U_{Z_0}$, $g(Z^\sharp\cap U)$ lies inside a unique fibral image $F(Z)^\sharp$ in $D_{q',p'}$.
\end{proposition}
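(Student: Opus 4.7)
The plan is to translate the fibral-image inclusion into a linear system in the matrix unknown $W$ and then invoke an implicit-function-theorem-style argument. First, locally near a preimage of $Z_0$ in $U \subset D_{q,p}$, I would write $g$ in homogeneous coordinates as $g([A,B]_q) = [g_1(A,B), g_2(A,B)]_{q'}$ with holomorphic lifts $g_1$ and $g_2$ taking values in $M(1,q';\mathbb{C})$ and $M(1,p';\mathbb{C})$ respectively. Parametrizing points of $Z^\sharp$ as $[A,AZ]_q$ with $[A]\in \mathbb{P}^{q-1}$, the fibral-image inclusion $g(Z^\sharp\cap U)\subset W^\sharp$ becomes, for each admissible $[A]$, the linear equation $g_1(A,AZ)\,W = g_2(A,AZ)$ in $W\in M(q',p';\mathbb{C})$.

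Next, I would use the uniqueness of $W_0$ at $Z_0$ to extract a rank condition. Suppose that $\{g_1(A,AZ_0) : [A]\in \mathbb{P}^{q-1},\ [A,AZ_0]_q\in U\}$ fails to span $\mathbb{C}^{q'}$; then there is a nonzero $v\in \mathbb{C}^{q'}$ perpendicular to this span, and for any $u\in \mathbb{C}^{p'}$ the perturbation $\Delta W := v u^T$ satisfies $g_1(A,AZ_0)\,\Delta W = 0$ identically in $A$. Since $D^{\mathrm{I}}_{q',p'}$ is open, $W_0 + \varepsilon \Delta W$ lies in $D^{\mathrm{I}}_{q',p'}$ for small $\varepsilon$ and also satisfies the full system at $Z_0$, contradicting uniqueness of $W_0$. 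Hence the spanning condition holds, and I can select $[A_1],\ldots,[A_{q'}]\in \mathbb{P}^{q-1}$ such that $g_1(A_1,A_1Z_0),\ldots,g_1(A_{q'},A_{q'}Z_0)$ are linearly independent in $\mathbb{C}^{q'}$.

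Then I would define $F$ explicitly. Let $M(Z)$ be the $q'\times q'$ matrix with rows $g_1(A_i,A_iZ)$ and $N(Z)$ the $q'\times p'$ matrix with rows $g_2(A_i,A_iZ)$. By continuity $M(Z)$ remains invertible on some neighborhood $U_{Z_0}$ of $Z_0$, so $F(Z) := M(Z)^{-1}N(Z)$ is a holomorphic map $U_{Z_0}\to M(q',p';\mathbb{C})$ with $F(Z_0) = W_0$. The fibral-image-preserving hypothesis guarantees existence of at least one $W(Z)\in D^{\mathrm{I}}_{q',p'}$ satisfying the full system; since this $W(Z)$ must in particular solve the $q'$ equations indexed by the selected $A_i$, it is forced to equal $F(Z)$. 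Uniqueness of $F(Z)$ as the modulus parameter then follows from the persistence of the spanning rank condition on $U_{Z_0}$, and after possibly shrinking $U_{Z_0}$, openness of $D^{\mathrm{I}}_{q',p'}$ combined with $F(Z_0)=W_0$ ensures $F(U_{Z_0})\subset D^{\mathrm{I}}_{q',p'}$.

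The main delicate point is the preliminary bookkeeping with the local holomorphic lifts $g_1, g_2$, which are defined only up to a common nonvanishing holomorphic scalar; one needs to verify that the pointwise equation $g_1(A,AZ)\,W = g_2(A,AZ)$, the resulting rank condition, and hence the map $F$ are intrinsic, independent of the choice of lift, and glue holomorphically across the parameter $A$. Once this intrinsic reformulation is in place, the remaining linear-algebra core and the application of the implicit function theorem via the explicit formula $F(Z) = M(Z)^{-1}N(Z)$ are routine.
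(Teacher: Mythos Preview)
Your proposal is correct and follows essentially the same approach as the paper: both reduce the fibral-image inclusion $g(Z^\sharp\cap U)\subset W^\sharp$ to the affine linear system $g_1(A,AZ)\,W=g_2(A,AZ)$ in the unknown $W$, extract finitely many parameters $[A_j]$ that already pin down $W_0$ uniquely at $Z_0$, and then pass to a neighborhood by continuity. The only cosmetic difference is that the paper phrases the continuity step via upper semi-continuity of $\dim_{\mathbb C}\widetilde V_Z$ (where $\widetilde V_Z$ is the solution set of the finite subsystem), whereas you produce the explicit formula $F(Z)=M(Z)^{-1}N(Z)$; both yield the same local holomorphic moduli map.
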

\begin{proof}
Write $g=[g_1;g_2]_{q'}$.
Let $L_{g}:\mathbb P^{q-1}\times D^{\mathrm{I}}_{q,p}\times D^{\mathrm{I}}_{q',p'}\to M(1,p';\mathbb C)\cong \mathbb C^{p'}$ be the map defined by
\[ L_g([A],Z,W):=g_1([A,AZ])W-g_2([A,AZ]) \]
for $([A],Z,W)\in \mathbb P^{q-1}\times D^{\mathrm{I}}_{q,p}\times D^{\mathrm{I}}_{q',p'}$.

Define $L_{g,[A],Z}:D^{\mathrm{I}}_{q',p'}\to M(1,p';\mathbb C)\cong \mathbb C^{p'}$ by
\[ L_{g,[A],Z}(W):=L_g([A],Z,W)\quad \forall\;W\in D^{\mathrm{I}}_{q',p'}. \]
Then, $L_{g,[A],Z}$ is a linear map for all $([A],Z)\in \mathbb P^{q-1}\times D^{\mathrm{I}}_{q,p}$.
For any $Z\in D^{\mathrm{I}}_{q,p}$ we define
\[ V_Z:=\bigcap_{[A]\in \mathbb P^{q-1}} \mathrm{Zero} (L_{g,[A],Z}). \]
By the assumption, $V_{Z_0}=\{W_0\}$.
There exist a positive integer $s$ and distinct $[A_j]\in \mathbb P^{q-1}$, $1\le j\le s$, such that
\[ V_{Z_0}=\bigcap_{[A]\in \mathbb P^{q-1}} \mathrm{Zero} (L_{g,[A],Z})
=\bigcap_{j=1}^s \mathrm{Zero} (L_{g,[A_j],Z_0}). \]
Define $\widetilde V_{Z}:=\bigcap_{j=1}^s \mathrm{Zero} (L_{g,[A_j],Z})$ for every $Z\in D^{\mathrm{I}}_{q,p}$.
Then, $V_{Z} \subseteq \widetilde V_{Z}$ for all $Z\in D^{\mathrm{I}}_{q,p}$.
Since $\dim_{\mathbb C} \widetilde V_{Z_0} = 0$, there exists an open neighborhood $U_{Z_0}$ of $Z_0$ in $D^{\mathrm{I}}_{q,p}$ such that
\[ \dim_{\mathbb C} V_Z \le  \dim_{\mathbb C} \widetilde V_Z
\le \dim_{\mathbb C} \widetilde V_{Z_0} = 0 \]
for all $Z\in U_{Z_0}$ by the upper semi-continuity of the function $h(Z):=\dim_{\mathbb C} \widetilde V_Z$.
Since $V_Z\neq \varnothing$ for all $Z\in D^{\mathrm{I}}_{q,p}$, we have $\dim_{\mathbb C} V_Z=0$ for all $Z\in U_{Z_0}$.
In particular, since $V_Z$ is a linear section of $D^{\mathrm{I}}_{q',p'}$, $V_Z=\{W_{Z}\}$ for all $Z\in U_{Z_0}$.
Define $F:U_{Z_0}\to D^{\mathrm{I}}_{q',p'}$ by $F(Z):=W_Z$, we have the desired local moduli map $F$ of $g$, i.e.,
$g(Z^\sharp \cap U) \subset F(Z)^\sharp$
for $Z\in U_{Z_0}$.  
\end{proof}

We may also extend the definition of the double fibrations to topological closures of the type-$\mathrm{I}$ irreducible bounded symmetric domains in complex Euclidean spaces and topological closures of the generalized complex balls in complex projective spaces (cf.\,Seo \cite[Section 4.1]{Seo:2015}), namely, we have the double fibrations
\[ \overline{D_{q,p}} \xleftarrow{\widetilde\pi_{q,p}^1} \mathbb P^{q-1}\times \overline{D^{\mathrm{I}}_{q,p}}\xrightarrow{\widetilde\pi_{q,p}^2} \overline{D^{\mathrm{I}}_{q,p}} \]
for integers $p,q\ge 2$.
Then, for any $W_1,W_2\in \overline{D^{\mathrm{I}}_{q,p}}$, $W_1^\sharp=W_2^\sharp$ if and only if $W_1=W_2$ by Ng \cite[Proposition 3.2]{Ng:2015}.

We now restrict to the special case where $q'=q+1$ and $3\le q\le p-2$, and we obtain the following rigidity theorem for fibral-image-preserving rational proper maps between the generalized complex balls $D_{q,p}$ and $D_{q+1,p'}$ with $3\le q\le p-2$ and $p\le p'$.
\begin{proposition}\label{Pro:RatProperM_FIP1}
Let $g:U\subset D_{q,p}\to D_{q+1,p'}$ be a rational proper map that is fibral-image-preserving with respect to the double fibrations
\begin{equation}\label{Eq:DF_SC_1}
D_{q,p} \xleftarrow{\pi_{q,p}^1} \mathbb P^{q-1}\times D^{\mathrm{I}}_{q,p}\xrightarrow{\pi_{q,p}^2} D^{\mathrm{I}}_{q,p},
\end{equation}
\begin{equation}\label{Eq:DF_SC_2}
D_{q+1,p'} \xleftarrow{\pi_{q+1,p'}^1} \mathbb P^{q}\times D^{\mathrm{I}}_{q+1,p'} \xrightarrow{\pi_{q+1,p'}^2} D^{\mathrm{I}}_{q+1,p'},
\end{equation}
where $3\le q\le p-2$ and $p'\ge p$, i.e.,
for any $Z\in D^{\mathrm{I}}_{q,p}$ such that $Z^\sharp\cap U\neq \varnothing$, we have $g(Z^\sharp\cap U)\subset W^\sharp$ for some $W\in D^{\mathrm{I}}_{q+1,p'}$.
Then, $g$ is linear, i.e., $\deg(g)=1$, and $g$ is equivalent to the map
\[ [z_1,\ldots,z_q;z_{q+1},\ldots,z_{q+p}]_q\mapsto
[z_1,\ldots,z_q,0;z_{q+1},\ldots,z_{q+p},{\bf 0}]_{q+1}. \]
\end{proposition}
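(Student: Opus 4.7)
The plan is to apply Theorem~\ref{thm:BEH11_MainThm} of Baouendi--Ebenfelt--Huang to the rational proper map $g$, use the fibral-image-preserving hypothesis in each of the two resulting cases to force $\deg(g)=1$, and then invoke Proposition~\ref{pro:linear_rational_proper} to identify $g$ with the standard linear embedding. The hypotheses of Theorem~\ref{thm:BEH11_MainThm} hold with $q'=q+1$: indeed $2\le q\le p$, $2\le q+1\le p'$, $q+1<p$ (since $q\le p-2$), and $q+1<2q-1$ (since $q\ge 3$). Since automorphisms of $D_{q,p}$ and $D_{q+1,p'}$ preserve the double fibrations \eqref{Eq:DF_SC_1} and \eqref{Eq:DF_SC_2}, the fibral-image-preserving property is preserved under the equivalences of Theorem~\ref{thm:BEH11_MainThm}, and $g$ may be assumed to take one of the two explicit forms produced there.

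In Case~(1), when $p'=p$ the form is already the standard linear embedding and there is nothing further to prove; when $p'>p$ one has
\[
g([z_1,\ldots,z_{q+p}])=[z_1,\ldots,z_q,\phi;z_{q+1},\ldots,z_{q+p},\phi,{\bf 0}]_{q+1}
\]
with $\phi=p_1/q_1$ and $\deg p_1=\deg q_1+1$. The fibral-image-preserving condition at $Z\in D^{\mathrm{I}}_{q,p}$ produces $W=W(Z)\in D^{\mathrm{I}}_{q+1,p'}$ with
\[
[A,\phi(A,AZ)]\,W=[AZ,\phi(A,AZ),{\bf 0}]\qquad\forall\,[A]\in\mathbb{P}^{q-1}.
\]
Partition $W$ into column blocks of widths $p,1,p'-p-1$ and row blocks of heights $q,1$, written as
\[
W=\begin{pmatrix}W_A&W_B&W_C\\W_D&W_E&W_F\end{pmatrix}.
\]
The middle-column equation $AW_B+\phi W_E=\phi$ then forces either $W_E\ne 1$, whence $\phi(A,AZ)=AW_B/(1-W_E)$ is linear in $A$, or $W_E=1$ together with $W_B={\bf 0}$. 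In the latter sub-case the outer-column equations $A(W_A-Z)=-\phi W_D$ and $AW_C=-\phi W_F$ either force $\phi(A,AZ)$ to be linear in $A$ (if $W_D$ or $W_F$ is nonzero) or yield $W_A=Z$, $W_C={\bf 0}$, $W_D=W_F={\bf 0}$; this last configuration makes ${\bf I}_{p'}-\overline{W}^TW$ singular, contradicting $W\in D^{\mathrm{I}}_{q+1,p'}$. Hence $\phi(A,AZ)$ is linear in $A$ for every $Z$, and a polynomial-identity argument, using that $\gcd(p_1,q_1)=1$ forces generic coprimality of the specialisations $p_1(A,AZ),q_1(A,AZ)\in\mathbb{C}[A]$ as $Z$ varies, yields $\deg q_1=0$; thus $\phi$ is a linear form in $z$, $\deg g=1$, and Proposition~\ref{pro:linear_rational_proper} completes Case~(1).

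In Case~(2), $g=[h,\phi_1,\ldots,\phi_q;\psi_1,\ldots,\psi_{p'-1},h]_{q+1}$ induces the rational proper map $\tilde g=[\phi_1,\ldots,\phi_q;\psi_1,\ldots,\psi_{p'-1}]_q:U'\subset D_{q,p}\to D_{q,p'-1}$. The hypotheses of Theorem~\ref{thm:Main_Theorem_Ng13} hold for $\tilde g$ (one has $2\le q\le p$, $2\le q\le p'-1$ since $p'-1\ge p-1\ge q+1$, and $q<2q-1$), so $\tilde g$ is linear and, after rescaling, the $\phi_j$ and $\psi_i$ are linear forms in $z$. Writing $W\in D^{\mathrm{I}}_{q+1,p'}$ with rows $W_0,\ldots,W_q$, the last-column component of $[h,\phi_1,\ldots,\phi_q]W=[\psi_1,\ldots,\psi_{p'-1},h]$ reads
\[
h\,[(W_0)_{p'}-1]+\sum_{j=1}^q\phi_j\,(W_j)_{p'}=0.
\]
A case analysis on $(W_0)_{p'}$, together with the remaining $p'-1$ column equations and the strict positivity ${\bf I}_{p'}-\overline{W}^TW>0$ to exclude the degenerate boundary sub-case as in Case~(1), shows that $h$ is a $\mathbb{C}$-linear combination of the $\phi_j$'s, hence a linear form in $z$. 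Therefore $\deg g=1$ and Proposition~\ref{pro:linear_rational_proper} concludes Case~(2).

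The main obstacle is the block-matrix analysis of the fibral-image equations in both cases, specifically excluding sub-cases that would place $W$ on $\partial D^{\mathrm{I}}_{q+1,p'}$; here the strict-interior requirement $W\in D^{\mathrm{I}}_{q+1,p'}$ is essential. Converting linearity of $\phi(A,AZ)$ in $A$ (for every $Z$) into linearity of $\phi$ in $z$ via the coprimality-and-specialisation argument is a secondary technical point.
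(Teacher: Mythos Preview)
Your approach is correct but substantially different from the paper's. The paper's key observation is that once $g$ is brought (in either case of Theorem~\ref{thm:BEH11_MainThm}) to the normal form $\widetilde g([A,B]_q)=[A,\phi;B,\phi,\mathbf{0}]_{q+1}$, one has automatically
\[
\widetilde g(Z^\sharp\cap\widetilde U)\subset\begin{bmatrix}Z&\mathbf{0}&\mathbf{0}\\ \mathbf{0}&1&\mathbf{0}\end{bmatrix}^\sharp,
\]
where the matrix lies on $\partial D^{\mathrm{I}}_{q+1,p'}$; combined with the \emph{interior} fibral image $W^\sharp$ supplied by the hypothesis, this traps each $\widetilde g(Z^\sharp\cap\widetilde U)$ inside a $(q-1)$-dimensional projective subspace, and Ng's Proposition~4.2 (on holomorphic maps sending $(q-1)$-planes into $(q-1)$-planes) then forces $\deg\widetilde g=1$ in one stroke. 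In Case~(2) the paper applies Theorem~\ref{thm:thm1.1&1.4_BH05} (with $r=r'=q$) to $\nu$ and uses the resulting linear change of coordinates to reduce to the Case~(1) normal form, rather than running a separate matrix analysis for $h$. Your block-matrix route is closer in spirit to the paper's later Proposition~\ref{pro:rat_proper_FIP_2}; there, however, the endgame after obtaining $\phi(A,AZ)$ linear in $A$ is to observe that $\widetilde g|_{Z^\sharp}$ is a holomorphic immersion and then invoke the argument of \cite{Ng:2013}, bypassing your coprimality-of-specialisations step entirely.

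Two small points in your write-up deserve tightening. First, the sub-cases $W_E=1$ in Case~(1) and $(W_0)_{p'}=1$ in Case~(2) are vacuous: every entry of a matrix in $D^{\mathrm{I}}_{q+1,p'}$ has modulus strictly less than $1$, so these values never occur and no boundary exclusion argument is needed. Second, Theorem~\ref{thm:Main_Theorem_Ng13} is stated for globally defined proper holomorphic maps, whereas $\tilde g$ is a priori only a rational proper map; you should instead invoke Theorem~\ref{thm:thm1.1&1.4_BH05}, which applies to germs at smooth boundary points and covers the rational case directly.
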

\begin{proof}
If $p'=p$, then the result follows from Theorem \ref{thm:thm1.1&1.4_BH05}.
Thus, we may suppose $p'>p$ from now on.
Since $g$ is a rational proper map, by Theorem \ref{thm:BEH11_MainThm} we have the following two possibilities.
\begin{enumerate}
\item[(1)] $g$ is equivalent to the map $\widetilde g:\widetilde U \subset D_{q,p}\to D_{q+1,p'}$ given by
\[ \widetilde g([A,B]_q)
=[A,\phi(A,B);B,\phi(A,B),{\bf 0}]_{q+1} \]
for all $[A,B]_q \in U \subset D_{q,p}$, where $\phi(z)={P(z)\over Q(z)}$ is a rational function such that $P(z),Q(z)\in \mathbb C[z]$ are homogeneous polynomials with $\deg P = \deg Q+1$ whenever $\deg P\ge 1$, $z\in \mathbb C^{p+q}$.
\item[(2)] $g$ is of the form $[h,\phi_1,\ldots,\phi_q;\psi_1,\ldots,\psi_{p'-1},h]_{q+1}$, and this induces a rational proper map $\nu:=[\phi_1,\ldots,\phi_q;\psi_1,\ldots,\psi_{p'-1}]_{q}:U'\subset D_{q,p} \to D_{q,p'-1}$.
By Theorem \ref{thm:thm1.1&1.4_BH05}, $\nu$ is linear and
\[ \nu\circ \sigma([A,B]_q)  \cdot M = [A;B,{\bf 0}]_{q} \]
for some $\sigma\in {\rm Aut}(D_{q,p})$ and some $M\in GL(q+p'-1,\mathbb C)$ such that
\[ M \begin{bmatrix}
{\bf I}_{q} & {\bf 0}\\
{\bf 0} & -{\bf I}_{p'-1}
\end{bmatrix} \overline{M}^T = \begin{bmatrix}
{\bf I}_{q} & {\bf 0}\\
{\bf 0} & -{\bf I}_{p'-1}
\end{bmatrix} \]
(see \cite[pp.\,379--380]{Baouendi_Huang:2005}).
Then, $g$ is equivalent to the map $\widetilde g = \widetilde \tau\circ g\circ \sigma:\widetilde U \subset D_{q,p}\to D_{q+1,p'}$ given by
\[ \widetilde g([A,B]_q):=[\widetilde h(A,B),A;B,{\bf 0},\widetilde h(A,B)]_{q+1}, \]
where $\widetilde \tau \in {\rm Aut}(D_{q+1,p'})$ is defined by
\[ \widetilde \tau([C;D]_{q+1}):=[C;D]_{q+1} \cdot \begin{bmatrix}
1 & & \\
 & M & \\
 & & 1
\end{bmatrix}. \]
\end{enumerate}
Combining these two cases, we may replace $g$ by the map $\widetilde g:\widetilde U \subset D_{q,p}\to D_{q+1,p'}$ given by
\[ \widetilde g([A,B]_q)
=[A,\phi(A,B);B,\phi(A,B),{\bf 0}]_{q+1} \]
in the consideration. (Noting that $\widetilde g$ is holomorphic on $\widetilde U$.)

We claim that $\widetilde g$ maps every fibral image of the double fibration {\rm(\ref{Eq:DF_SC_1})} to at least two fibral images of the double fibration
\[ \overline{D_{q+1,p'}} \xleftarrow{\widetilde\pi_{q+1,p'}^1} \mathbb P^{q}\times \overline{D^{\mathrm{I}}_{q+1,p'}} \xrightarrow{\widetilde\pi_{q+1,p'}^2} \overline{D^{\mathrm{I}}_{q+1,p'}}, \]
i.e., for any $Z\in D^{\mathrm{I}}_{q,p}$ such that $Z^\sharp\cap \widetilde U\neq \varnothing$, $\widetilde g(Z^\sharp\cap \widetilde U)\subset W_1^\sharp\cap W_2^\sharp$ for some distinct $W_1,W_2\in \overline{D^{\mathrm{I}}_{q+1,p'}}$.

From the assumption, for any $Z\in D^{\mathrm{I}}_{q,p}$ such that $Z^\sharp \cap \widetilde U\neq \varnothing$, we have $\widetilde g(Z^\sharp\cap \widetilde U)\subset W^\sharp$ for some $W\in D^{\mathrm{I}}_{q+1,p'}$.
On the other hand, from the expression of $\widetilde g$, we also have
\[ \widetilde g(Z^\sharp\cap \widetilde U) \subset
\begin{bmatrix}
Z &{\bf 0}& {\bf 0}\\
{\bf 0} & 1 & {\bf 0}
\end{bmatrix}^\sharp. \]
Then, for each $Z\in D^{\mathrm{I}}_{q,p}$ such that $Z^\sharp\cap \widetilde U\neq \varnothing$, $\widetilde g(Z^\sharp\cap \widetilde U)$ lies inside two fibral images $\begin{bmatrix}
Z &{\bf 0}& {\bf 0}\\
{\bf 0} & 1 & {\bf 0}
\end{bmatrix}^\sharp$ and $W^\sharp$ for some $W\in D^{\mathrm{I}}_{q+1,p'}$.
In addition, $\begin{bmatrix}
Z &{\bf 0}& {\bf 0}\\
{\bf 0} & 1 & {\bf 0}
\end{bmatrix}^\sharp$ and $W^\sharp$ are distinct because $\begin{bmatrix}
Z &{\bf 0}& {\bf 0}\\
{\bf 0} & 1 & {\bf 0}
\end{bmatrix}\in \partial D^{\mathrm{I}}_{q+1,p'}$ and $W \in D^{\mathrm{I}}_{q+1,p'}$ so that $\begin{bmatrix}
Z &{\bf 0}& {\bf 0}\\
{\bf 0} & 1 & {\bf 0}
\end{bmatrix}\neq W$.
The claim is proved. This actually shows that for any $Z\in D^{\mathrm{I}}_{q,p}$ such that $Z^\sharp\cap \widetilde U\neq \varnothing$, $\widetilde g(Z^\sharp\cap \widetilde U)$ lies inside a $(q-1)$-dimensional projective subspace in $D_{q+1,p'}$.

Note that fibral images in $D_{q,p}$ (resp.\,$D_{q+1,p'}$) are $(q-1)$-dimensional (resp.\,$q$-dimensional) projective subspaces lying inside $D_{q,p}$ (resp.\,$D_{q+1,p'}$).
The above claim shows that $\widetilde g:\widetilde U \to D_{q+1,p'}\subset \mathbb P^{q+p'}$ is a holomorphic map such that for every $(q-1)$-dimensional projective subspace $\Pi$ of $\mathbb P^{q+p-1}$ with $\widetilde U\cap \Pi\neq\varnothing$, we have 
\[ \widetilde g(\widetilde U\cap \Pi)\subset \Pi'\]
for some $(q-1)$-dimensional projective subspace $\Pi'$ in $D_{q+1,p'}$.
By Ng \cite[Proposition 4.2]{Ng:2015}, either (a) $\widetilde g(\widetilde U)$ is contained in a single $(q-1)$-dimensional projective subspace of $\mathbb P^{q+p'}$, or (b) $\deg(\widetilde g)=1$.
Since $\dim_{\mathbb C} \widetilde g(\widetilde U) = q+p-1>q-1$ by $p\ge 2$, Case (a) is actually impossible.
Hence, we have $\deg(\widetilde g)=1$ and thus $\deg(g)=1$, i.e., $g$ is linear.
By Proposition \ref{pro:linear_rational_proper}, $g$ is equivalent to the map
\[ [z_1,\ldots,z_q;z_{q+1},\ldots,z_{q+p}]_q\mapsto
[z_1,\ldots,z_q,0;z_{q+1},\ldots,z_{q+p},{\bf 0}]_{q+1}. \]  
\end{proof}

Before obtaining the general version of Proposition \ref{Pro:RatProperM_FIP1}, we need the following lemma regarding the partitions of matrices in $D^{\rm I}_{q',p'}$.

\begin{lemma}\label{lem:Part_Mat_1}
Let $W\in D^{\rm I}_{q',p'}$, where $q',p'\ge 1$ are integers.
\begin{enumerate}
\item Write $W=[v_1,\ldots,v_m]$, where $v_j\in M(q',k_j;\mathbb C)$, $1\le j\le m$, and $k_j$, $1\le j\le m$, are positive integers such that $\sum_{j=1}^m k_j = p'$.
Then, $v_j \in D^{\rm I}_{q',k_j}$ for $1\le j\le m$.
\item Write $W=[\eta_1^T,\ldots,\eta_n^T]^T$, where $\eta_j\in M(l_j,p';\mathbb C)$, $1\le j\le n$, and $l_j$, $1\le j\le n$, are positive integers such that $\sum_{j=1}^n l_j = q'$.
Then, $\eta_j \in D^{\rm I}_{l_j,p'}$ for $1\le j\le n$.
\end{enumerate}
In particular, writing
\[ W=\begin{bmatrix}
W_{11} &\cdots & W_{1m}\\
\vdots & \ddots & \vdots\\
W_{n1} &\cdots & W_{nm}
\end{bmatrix} \]
as a block matrix such that $W_{ij}\in M(l_i,k_j;\mathbb C)$, where $l_i\ge 1$, and $k_j\ge 1$ for $1\le i\le n$, $1\le j\le m$, with $\sum_{i=1}^n l_i = q'$ and $\sum_{j=1}^m k_j = p'$, we have $W_{ij}\in D^{\rm I}_{l_i,k_j}$.
\end{lemma}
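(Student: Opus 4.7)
The plan is to reduce everything to the familiar fact that every principal submatrix of a positive definite Hermitian matrix is itself positive definite. First I would recall the well-known equivalence, valid for any $Z \in M(r,s;\mathbb C)$, that $\mathbf{I}_s - \overline{Z}^T Z > 0$ if and only if $\mathbf{I}_r - Z \overline{Z}^T > 0$, which holds because $\overline{Z}^T Z$ and $Z \overline{Z}^T$ share the same nonzero eigenvalues. Consequently, membership in $D^{\rm I}_{q',p'}$ can be tested with either of these two Hermitian matrices.

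For Part (1), I would write out $\overline{W}^T W$ as a block matrix adapted to the column partition $W = [v_1, \ldots, v_m]$; its $(j,j)$-th diagonal block is $\overline{v_j}^T v_j$ of size $k_j \times k_j$. Hence $\mathbf{I}_{k_j} - \overline{v_j}^T v_j$ appears as the $(j,j)$-th diagonal block of $\mathbf{I}_{p'} - \overline{W}^T W$, and as a principal submatrix of a positive definite Hermitian matrix it is positive definite. This yields $v_j \in D^{\rm I}_{q', k_j}$.

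For Part (2), I would dually compute $W \overline{W}^T$ in the row partition $W = [\eta_1^T, \ldots, \eta_n^T]^T$; its $(j,j)$-th diagonal block is $\eta_j \overline{\eta_j}^T$, so $\mathbf{I}_{l_j} - \eta_j \overline{\eta_j}^T$ is a principal submatrix of the positive definite matrix $\mathbf{I}_{q'} - W \overline{W}^T$ and is therefore positive definite. Invoking the equivalence recalled above, this is the same as $\mathbf{I}_{p'} - \overline{\eta_j}^T \eta_j > 0$, and so $\eta_j \in D^{\rm I}_{l_j, p'}$.

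The ``in particular'' statement then follows by chaining the two parts: applying (2) first gives the row strip $\eta_i := [W_{i1}, \ldots, W_{im}] \in D^{\rm I}_{l_i, p'}$, and applying (1) to the column partition of $\eta_i$ extracts $W_{ij} \in D^{\rm I}_{l_i, k_j}$. This is a purely formal linear-algebra lemma and I do not anticipate a genuine obstacle; the only point that is not entirely tautological is the passage between $\overline{W}^T W$ and $W \overline{W}^T$ used in Part (2), and that rests on the standard eigenvalue identification noted at the outset.
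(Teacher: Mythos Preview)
Your proof is correct. It differs from the paper's argument in a small but genuine way: for each part you pick the matrix product that yields a \emph{block} structure and invoke the fact that principal submatrices of a positive definite matrix are positive definite, whereas the paper picks the \emph{other} product, which yields a \emph{sum} $\sum_j v_j\overline{v_j}^T$ (resp.\ $\sum_j \overline{\eta_j}^T\eta_j$) of positive semidefinite matrices, and argues that $\mathbf I - v_j\overline{v_j}^T > \sum_{i\neq j} v_i\overline{v_i}^T \ge 0$. Concretely, for Part~(1) the paper works with $\mathbf I_{q'} - W\overline{W}^T$ while you work with $\mathbf I_{p'} - \overline{W}^T W$, and the roles are swapped in Part~(2). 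Both approaches rest on the same equivalence $\mathbf I_s - \overline{Z}^TZ > 0 \Leftrightarrow \mathbf I_r - Z\overline{Z}^T > 0$ that you stated at the outset; your route via principal submatrices is arguably the cleaner of the two, since it avoids the extra step of checking that the remaining summands are positive semidefinite.
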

\begin{proof}
We first consider Case (1).
It is well-known that the map from $M(q',p';\mathbb C)$ to $M(p',q';\mathbb C)$ given by $Z\mapsto Z^T$ actually induces a biholomorphism from $D^{\rm I}_{q',p'}$ onto $D^{\rm I}_{p',q'}$.
Therefore, we have $W\in D^{\rm I}_{q',p'}$ if and only if $W\in M(q',p',\mathbb C)$ such that ${\bf I}_{q'} - W\overline{W}^T>0$.
Then, we have
\[ {\bf I}_{q'}-\sum_{j=1}^m v_j \overline{v_j}^T = {\bf I}_{q'}- W\overline{W}^T > 0 \]
and thus for $1\le j\le m$, ${\bf I}_{q'} - v_j\overline{v_j}^T > \sum_{1\le i\le m,\;i\neq j} v_i\overline{v_i}^T$.
For any $\alpha\in M(1,q';\mathbb C)$, $\alpha\neq 0$, we have
$\alpha v_i\overline{v_i}^T \overline{\alpha}^T = (\alpha v_i) \overline{\alpha v_i}^T \ge 0$ for $1\le i\le m$.
Therefore, $v_i \overline{v_i}^T \ge 0$ for $1\le i\le m$ and thus
\[ {\bf I}_{q'} - v_j\overline{v_j}^T > \sum_{1\le i\le m,\;i\neq j} v_i\overline{v_i}^T\ge 0, \]
i.e., ${\bf I}_{q'} - v_j\overline{v_j}^T > 0$, and $v_j\in D^{\rm I}_{q',k_j}$, as desired.

Now, we consider Case (2). We have $W\in M(q',p';\mathbb C)$ such that ${\bf I}_{p'}-\overline{W}^TW > 0$ and
\[ {\bf I}_{p'} -\overline{W}^TW = {\bf I}_{p'} - \sum_{j=1}^n \overline{\eta_j}^T\eta_j. \]
By the same arguments as in Case (1), $\overline{\eta_j}^T\eta_j\ge 0$ for $1\le j\le n$ and thus
\[ {\bf I}_{p'} - \overline{\eta_j}^T\eta_j > \sum_{1\le i\le n,\;i\neq j} \overline{\eta_i}^T\eta_i \ge 0, \]
i.e., ${\bf I}_{p'} - \overline{\eta_j}^T\eta_j > 0$, and $\eta_j\in D^{\rm I}_{l_j,p'}$, as desired.
Finally, the last assertion of this lemma follows readily from both Case (1) and Case (2).
\end{proof}

Now, we are ready to establish the following rigidity theorem for fibral-image-preserving rational proper maps between the generalized complex balls $D_{q,p}$ and $D_{q',p'}$ for $p\ge q\ge 2$, $q'<\min\{2q-1,p\}$, $q\le q'$ and $p\le p'$.

\begin{proposition}\label{pro:rat_proper_FIP_2}
Let $g:U\subset D_{q,p}\to D_{q',p'}$ be a rational proper map that is fibral-image-preserving with respect to the double fibrations
\begin{equation}\label{Eq:DF_SC_3}
D_{q,p} \xleftarrow{\pi_{q,p}^1} \mathbb P^{q-1}\times D^{\mathrm{I}}_{q,p}\xrightarrow{\pi_{q,p}^2} D^{\mathrm{I}}_{q,p},
\end{equation}
\begin{equation}\label{Eq:DF_SC_4}
D_{q',p'} \xleftarrow{\pi_{q',p'}^1} \mathbb P^{q'-1}\times D^{\mathrm{I}}_{q',p'} \xrightarrow{\pi_{q',p'}^2} D^{\mathrm{I}}_{q',p'},
\end{equation}
where $p\ge q\ge 2$, $q'<\min\{2q-1,p\}$, $q\le q'$ and $p\le p'$.
Then, for any $Z\in D^{\mathrm{I}}_{q,p}$ such that $Z^\sharp\cap U\neq \varnothing$, $g(Z^\sharp\cap U)\subset W_1^\sharp\cap W_2^\sharp$ for some distinct $W_1,W_2\in \overline{D^{\mathrm{I}}_{q',p'}}$.
Actually, for every $(q-1)$-dimensional projective subspace $\Pi$ in $D_{q,p}$ with $\Pi\cap U\neq\varnothing$, we have $g(\Pi\cap U)\subset \Pi'$ for some $(q'-2)$-dimensional projective subspace $\Pi'$ in $D_{q',p'}$.
Furthermore, $g$ is linear, i.e., $\deg(g)=1$, and $g$ is equivalent to the map
\[ [z_1,\ldots,z_q;z_{q+1},\ldots,z_{q+p}]_q\mapsto
[z_1,\ldots,z_q,{\bf 0};z_{q+1},\ldots,z_{q+p},{\bf 0}]_{q'} \]
\end{proposition}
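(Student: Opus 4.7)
The plan is to reduce the proposition to showing $\deg(g)=1$; once this is in hand, Proposition \ref{pro:linear_rational_proper} directly supplies the normal form $g$ equivalent to the standard linear embedding, and the statements about fibral-image intersections and images of $(q-1)$-dimensional projective subspaces then follow by direct inspection of that normal form. My strategy is induction on $q'-q\ge 0$. The base cases $q=q'$ and $p=p'$ are both handled by Theorem \ref{thm:thm1.1&1.4_BH05}, applied at a smooth boundary point of $D_{q,p}$ where $g$, considered as a rational map from $\mathbb{P}^{q+p-1}$ to $\mathbb{P}^{q'+p'-1}$, is holomorphic and sends boundary into boundary (such a point exists because $g$ is rational proper). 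In both base cases the theorem yields a linear totally geodesic embedding.

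For the inductive step, assume $q<q'$ and $p<p'$, and apply Theorem \ref{thm:BEH11_MainThm} to $g$. In Case (2), $g=[h,\phi_1,\ldots,\phi_{q'-1};\psi_1,\ldots,\psi_{p'-1},h]_{q'}$ and the induced rational proper map $\nu:=[\phi_1,\ldots,\phi_{q'-1};\psi_1,\ldots,\psi_{p'-1}]_{q'-1}:U'\subset D_{q,p}\to D_{q'-1,p'-1}$ must first be shown to be fibral-image-preserving. Starting from $g(Z^{\sharp}\cap U)\subset W^{\sharp}$ for some $W\in D^{\mathrm{I}}_{q',p'}$, decompose
\[
W=\begin{bmatrix} w_1 & w_2 \\ W' & w_3 \end{bmatrix}
\]
with $w_2\in\mathbb{C}$ and $w_3$ a column, and solve $(h,\phi)W=(\psi,h)$ to get $\nu(Z^{\sharp}\cap U')\subset V^{\sharp}$ for $V=W'+w_3 w_1/(1-w_2)$. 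The claim $V\in D^{\mathrm{I}}_{q'-1,p'-1}$ reduces to a Schur complement argument on the positive definite matrix $\mathbf{I}_{p'}-\overline{W}^T W$. The induction hypothesis then applies to $\nu$ (since $q'-1<\min\{2q-1,p\}$, $q\le q'-1$, and $p\le p'-1$), yielding $\nu$ equivalent to the standard linear embedding. Normalizing $\nu$ via automorphisms puts $g$ into the shape $[h,z_1,\ldots,z_q,\mathbf{0};z_{q+1},\ldots,z_{q+p},\mathbf{0},h]_{q'}$ up to equivalence, and a further block-matrix analysis of the fibral-image-preserving condition (which again requires an interior $W$) forces $h$ to be linear.

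In Case (1) of Theorem \ref{thm:BEH11_MainThm}, $g$ is equivalent to $[A,\phi(A,B);B,\phi(A,B),\mathbf{0}]_{q'}$ (or its dual). The boundary element $W_Z=\begin{bmatrix} Z & \mathbf{0} & \mathbf{0} \\ \mathbf{0} & \mathbf{I}_{q'-q} & \mathbf{0} \end{bmatrix}\in\partial D^{\mathrm{I}}_{q',p'}$ already satisfies $g(Z^{\sharp})\subset W_Z^{\sharp}$, but the fibral-image-preserving hypothesis demands an interior $W\in D^{\mathrm{I}}_{q',p'}$ with the same property. Writing $(A,\phi)W=(AZ,\phi,\mathbf{0})$ block by block and using the positivity constraint $\mathbf{I}_{p'}-\overline{W}^T W>0$ forces $\phi$ to depend linearly on $A$ for every $Z$; combined with the homogeneity supplied by Theorem \ref{thm:BEH11_MainThm}, $\phi$ must be a linear polynomial in $(A,B)$, and hence $g$ is linear.

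Having established $\deg(g)=1$ in all cases, Proposition \ref{pro:linear_rational_proper} upgrades this to $g$ equivalent to the standard linear embedding, and the remaining geometric conclusions follow by direct inspection of that normal form. The principal obstacle is the Schur complement computation in Case (2) verifying that $V=W'+w_3 w_1/(1-w_2)$ lies in the interior of $D^{\mathrm{I}}_{q'-1,p'-1}$ rather than on its boundary, since without this the inductive step stalls; a secondary technical point is the block-matrix analysis needed in Case (1), and in the reduced Case (2) form, to conclude that $\phi$ and $h$ respectively must be linear.
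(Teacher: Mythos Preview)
Your overall strategy is sound and your Case~(1) analysis matches the paper's closely, but there is an unnecessary detour in Case~(2) that creates the obstacle you flag. The paper does \emph{not} attempt to show that the induced map $\nu:D_{q,p}\to D_{q'-1,p'-1}$ is fibral-image-preserving. Instead, it simply iterates Theorem~\ref{thm:BEH11_MainThm} on $\nu,\nu^{(1)},\ldots$ (which requires only that each is a rational proper map, a property automatically inherited), until one of them lands in Case~(1) or hits the base case; at each stage the automorphism of the smaller $D_{q'-k,p'-k}$ is lifted to $D_{q',p'}$ via a block-diagonal matrix as in the proof of Proposition~\ref{Pro:RatProperM_FIP1}. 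The upshot is that $g$ itself is equivalent to a map $\widetilde g$ of the Case~(1) shape $[A,\phi,\mathbf{0};B,\phi,\mathbf{0}]_{q'}$, and it is only at this final stage that the fibral-image-preserving hypothesis on $g$ (which transfers to $\widetilde g$ under equivalence) is invoked. This completely avoids the Schur-complement verification that $V=W'+w_3w_1/(1-w_2)$ lies in the interior of $D^{\mathrm I}_{q'-1,p'-1}$.

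One further point: your concluding step ``combined with the homogeneity supplied by Theorem~\ref{thm:BEH11_MainThm}, $\phi$ must be a linear polynomial in $(A,B)$'' is too quick. Knowing that $\phi(A,AZ)=A\,M(Z)$ is linear in $A$ for each $Z$ does not by itself force $\phi$ to be a linear polynomial in $(A,B)$; ratios of homogeneous polynomials can restrict to linear maps on many linear slices without being globally linear. The paper handles this by noting that $\widetilde g|_{Z^\sharp}$ is then a holomorphic linear immersion of $\mathbb P^{q-1}$ into $\mathbb P^{q'+p'-1}$ for every such $Z$, and then appealing to the argument in Ng~\cite[Proof of the Main Theorem]{Ng:2013} to conclude that $\widetilde g$ is globally linear.
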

\begin{proof}
If $q'=q$ or $p'=p$, then the result follows from Theorem \ref{thm:thm1.1&1.4_BH05}.
Thus, we may suppose $q'>q$ and $p'>p$ from now on.
Since $g$ is a rational proper map, by Theorem \ref{thm:BEH11_MainThm} we have the following two possibilities.
\begin{enumerate}
\item[(1)] $g$ is equivalent to the map $\widetilde g:\widetilde U \subset D_{q,p}\to D_{q',p'}$ given by
\[ \widetilde g([A,B]_q)
=[A,\phi(A,B),{\bf 0};B,\phi(A,B),{\bf 0}]_{q'} \quad \forall\;[A,B]_q \in U \subset D_{q,p},\]
where $\phi=(\phi_1,\ldots,\phi_{\min\{p'-p,q'-q\}})$ such that $\phi_j(z)={p_j(z)\over q_j(z)}$ for some homogeneous polynomials $p_j(z)$ and $q_j(z)$ in $z$ $=$ ($z_1$,$\ldots$,$z_{p+q}$) $\in$ $\mathbb C^{p+q}$ with $\deg p_j=\deg q_j+1$, $1\le j\le \min\{p'-p,q'-q\}$.
\item[(2)] $g$ is of the form
\[ [h,\phi_1,\ldots,\phi_{q'-1};\psi_1,\ldots,\psi_{p'-1},h]_{q'}\]
for some rational functions $h$, $\phi_j$, $1\le j\le q'-1$, and $\psi_i$, $1\le i\le p'-1$, and this induces a rational proper map
\[ \nu:=[\phi_1,\ldots,\phi_{q'-1};\psi_1,\ldots,\psi_{p'-1}]_{q'-1}:U'\subset D_{q,p}\to D_{q'-1,p'-1}. \]
\end{enumerate}
For Case (2), we apply Theorem \ref{thm:BEH11_MainThm} to the rational proper map $\nu$, hence similar to that of $g$, we obtain the two cases for $\nu$ as above, i.e., either (a) $\nu$ is equivalent to the map of the form $[A,\phi^{(1)}(A,B),{\bf 0};B,\phi^{(1)}(A,B),{\bf 0}]_{q'-1}$ or (b) $\nu=[h^{(1)},\nu^{(1)},h^{(1)}]$ induces the rational proper map $\nu^{(1)}$ from $U^{(1)}\subset D_{q,p}$ to $D_{q'-2,p'-2}$ provided that $q'-2\ge q$ and $p'-2\ge p$.
If Case (a) holds, then we are done. Otherwise, $\nu$ falls into Case (b) and we apply Theorem \ref{thm:BEH11_MainThm} to the rational proper map $\nu^{(1)}$.
By induction and by the arguments in the proof of Proposition \ref{Pro:RatProperM_FIP1}, $g$ is eventually equivalent to a rational proper map $\widetilde g:\widetilde U \subset D_{q,p}\to D_{q',p'}$ given by
\begin{equation}\label{Eq:Standard_form}
\widetilde g([A,B]_q)
=[A,\phi(A,B),{\bf 0};B,\phi(A,B),{\bf 0}]_{q'} \quad \forall\;[A,B]_q \in \widetilde U \subset D_{q,p},
\end{equation}
where $\phi=(\phi_1,\ldots,\phi_{\min\{p'-p,q'-q\}})$ and $\phi_j(A,B)$ are some rational functions in $(A,B)$, $1\le j\le \min\{p'-p,q'-q\}$.
That means we can always replace $g$ by such a map $\widetilde g$.
Then, the proof of the first assertion is complete by the same arguments in the proof of Proposition \ref{Pro:RatProperM_FIP1}.
This finishes the proof of the first assertion.

It remains to prove that $g$ is of degree $1$, equivalently, the map $\widetilde g$ in Equation (\ref{Eq:Standard_form}) is of degree $1$.
From the assumption that $q'>q$ and $p'>p$ at the beginning of the proof, we have $\min\{p'-p,q'-q\}\ge 1$.
Since $\widetilde g$ is equivalent to $g$, for any $Z\in D^{\rm I}_{q,p}$ such that $Z^\sharp\cap \widetilde U\neq \varnothing$ we have
\begin{equation}\label{Eq:RPM_FP1}
[A,\phi(A,AZ),{\bf 0}] W = [AZ,\phi(A,AZ),{\bf 0}]
\end{equation} 
for some $W\in D^{\rm I}_{q',p'}$ and for all $[A,AZ]_q\in Z^\sharp\cap \widetilde U$.
We fix such an element $Z$ in the following.
Write $s_0:=\min\{p'-p,q'-q\}$ and write $W=\begin{bmatrix} W_{ij} \end{bmatrix}$ as a block matrix so that $W_{11}\in M(q,p;\mathbb C)$ and $W_{22}\in M(s_0,s_0;\mathbb C)$. Then, from Equation (\ref{Eq:RPM_FP1}) we have
\[ AW_{12}+\phi(A,AZ)W_{22}=\phi(A,AZ). \]
If ${\bf I}_{s_0}-W_{22}$ is invertible, then the above equality yields
\[ \phi(A,AZ) = AW_{12}({\bf I}_{s_0}-W_{22})^{-1} \]
and thus $\phi(A,AZ)$ is of degree $1$ in $A$. In this case, we have
\[ \widetilde g([A,AZ]_q)
=[A,AW_{12}({\bf I}_{s_0}-W_{22})^{-1},{\bf 0};AZ,AW_{12}({\bf I}_{s_0}-W_{22})^{-1},{\bf 0}]_{q'} \]
on the dense open subset $\widetilde U \cap Z^\sharp$ of $Z^\sharp$. Hence, by the identity theorem for meromorphic maps, $\widetilde g$ is well-defined on $Z^\sharp \cong \mathbb P^{q-1}$, i.e., $Z^\sharp \subset \widetilde U$, and the restriction of $\widetilde g$ to $Z^\sharp\cong \mathbb P^{q-1}$ is actually a holomorphic immersion. (Noting that this holds for any $Z\in D^{\rm I}_{q,p}$ such that $Z^\sharp\cap \widetilde U\neq \varnothing$.) Then, one can deduce from the arguments of Ng \cite[Proof of the Main Theorem]{Ng:2013} that $\widetilde g$ is linear, i.e., $\widetilde g$ is of degree $1$.

Thus, to finish the proof, it suffices to prove that ${\bf I}_{s_0}-W_{22}$ is invertible. By Lemma \ref{lem:Part_Mat_1}, we have $W_{22}\in D^{\rm I}_{s_0,s_0}$ so that
\[ {\bf I}_{s_0}-\overline{W_{22}}^T W_{22} > 0.\]
Assume the contrary that ${\bf I}_{s_0}-W_{22}$ is not invertible, equivalently, $\det({\bf I}_{s_0}-W_{22})=0$. Then, $1$ is an eigenvalue of $W_{22}$ and thus there exists an eigenvector $\alpha\in M(s_0,1;\mathbb C)$, $\alpha\neq 0$, of $W_{22}$ such that $W_{22} \alpha = \alpha$. It follows that
\[ \overline{\alpha}^T\left({\bf I}_{s_0}-\overline{W_{22}}^T W_{22}\right) \alpha
= \overline{\alpha}^T \alpha - \overline{W_{22}\alpha}^T W_{22}\alpha
= \overline{\alpha}^T \alpha - \overline{\alpha}^T \alpha
= 0, \]
which contradicts with the fact that ${\bf I}_{s_0}-\overline{W_{22}}^T W_{22}>0$. Hence, ${\bf I}_{s_0}-W_{22}$ is invertible.

As a consequence, $\widetilde g$ must be of degree $1$, and so is $g$. It follows from Proposition \ref{pro:linear_rational_proper} that $g$ is equivalent to the map
\[ [z_1,\ldots,z_q;z_{q+1},\ldots,z_{q+p}]_q\mapsto
[z_1,\ldots,z_q,{\bf 0};z_{q+1},\ldots,z_{q+p},{\bf 0}]_{q'}. \]  
\end{proof}
\begin{remark}
Perhaps Proposition \ref{Pro:RatProperM_FIP1} follows directly from Proposition \ref{pro:rat_proper_FIP_2}, we still keep the statement and the proof of Proposition \ref{Pro:RatProperM_FIP1} because this shows how Proposition \ref{Pro:RatProperM_FIP1} is generalized to Proposition \ref{pro:rat_proper_FIP_2}.
\end{remark}

\subsection{Behaviour of fibral-image-preserving maps between type-$\mathrm{I}$ domains}
\label{subsection:B_FIPM}
In this section, we investigate the behaviour of fibral-image-preserving holomorphic maps from $D^{\mathrm{I}}_{q,p}$ to $D^{\mathrm{I}}_{q',p'}$ with respect to the double fibrations
\[ D_{q,p} \xleftarrow{\pi_{q,p}^1} \mathbb P^{q-1}\times D^{\mathrm{I}}_{q,p}\xrightarrow{\pi_{q,p}^2} D^{\mathrm{I}}_{q,p}, \]
 \[ D_{q',p'} \xleftarrow{\pi_{q',p'}^1} \mathbb P^{q'-1}\times D^{\mathrm{I}}_{q',p'} \xrightarrow{\pi_{q',p'}^2} D^{\mathrm{I}}_{q',p'},\]
where $q,q'\ge 2$.

Let $f:D^{\mathrm{I}}_{q,p}\to D^{\mathrm{I}}_{q',p'}$ be a non-constant holomorphic map which maps every $(q-1,p)$-subspace of $D^{\mathrm{I}}_{q,p}$ into a $(q'-1,p')$-subspace of $D^{\mathrm{I}}_{q',p'}$, where $q,q'\ge 2$.
Then, for any $x\in D_{q,p}$ we define
\[ \mathcal V_{x}:=\{z \in D_{q',p'}: f(x^\sharp) \subset z^\sharp\}=\bigcap_{y\in f(x^\sharp)} y^\sharp \]
so that $\mathcal V_{x}\cong \mathbb P^{q'-n_0(x)-1}$ by Ng \cite[Proposition 3.2]{Ng:2015}, where 
\[ n_0(x)=n_0(f(x^\sharp)):=\min\{ n : f(x^\sharp) \subset D\cong D^{\mathrm{I}}_{n,p'}\subset D^{\mathrm{I}}_{q',p'}\}. \]
Defining
\[ V:=\left\{(x,z)\in D_{q,p}\times D_{q',p'}: z\in \mathcal V_x\right\} , \]
$V$ is a complex-analytic variety.
Let $\phi:=\mathrm{Pr}_1|_V: V\to D_{q,p}$ be the projection map onto the first factor, i.e., $\phi(x,z):=x$ for all $(x,z)\in V$.
Then, for any $x\in D_{q,p}$ we have $\phi^{-1}(x)=\{x\}\times \mathcal V_x \cong \mathcal V_x \cong \mathbb P^{q'-n_0(x)-1}$ and $\phi$ is proper.
For $l\ge 0$ we define
\[ \mathcal U_l:=\left\{ x\in D_{q,p}: \dim_{\mathbb C} \phi^{-1}(x) \ge l \right\} \]
and
\[ \widetilde{\mathcal U_l}:= \left\{(x,z)\in V: \dim_{(x,z)} \phi^{-1}(\phi(x,z)) \ge l \right\} \]
so that $\mathcal U_l=\phi(\widetilde{\mathcal U_l})$.
It is clear that for any $(x,z)\in V$ we have 
\[ \dim_{(x,z)} \phi^{-1}(\phi(x,z))=\dim_{\mathbb C} \phi^{-1}(x) = \dim_{\mathbb C} \mathcal V_x. \]
By Grauert's Semicontinuity Theorem on the fiber dimension, $\widetilde{\mathcal U_l}\subset V$ (resp.\,$\mathcal U_l$ $\subset$  $D_{q,p}$) is a complex-analytic subvariety.
On the other hand, we can rewrite $\mathcal U_l$ as
\[ \mathcal U_l= \left\{ x\in D_{q,p}: f(x^\sharp) \text{ lies in a $(q'-l-1,p')$-subspace of $D^{\mathrm{I}}_{q',p'}$} \right\} \]
for $0\le l\le q'-1$.

Suppose there exists $x_0\in D_{q,p}$ such that $f$ maps $x_0^\sharp$ into a unique $(q'-1,p')$-subspace of $D^{\mathrm{I}}_{q',p'}$.
Then, $\dim_{\mathbb C}\mathcal V_{x_0} = 0$, equivalently, $\dim_{\mathbb C} \phi^{-1}(x_0)=0$.
Hence, $\mathcal U_1 \subset D_{q,p}$ can only be a proper complex-analytic subvariety and thus the set
\[ \left\{ x\in D_{q,p}: \begin{split}
&\text{$f$ maps $x^\sharp$ into a unique}\\
&\text{$(q'-1,p')$-subspace of $D^{\mathrm{I}}_{q',p'}$}
\end{split}\right\} = D_{q,p} \smallsetminus \mathcal U_1 \]
is a dense open subset.

In particular, $f$ must satisfy one of the following two properties.
\begin{enumerate}
\item $f$ maps a general $(q-1,p)$-subspace of $D^{\mathrm{I}}_{q,p}$ into a unique $(q'-1,p')$-subspace of $D^{\mathrm{I}}_{q',p'}$.
\item $f$ maps every $(q-1,p)$-subspace of $D^{\mathrm{I}}_{q,p}$ into at least two $(q'-1,p')$-subspaces of $D^{\mathrm{I}}_{q',p'}$.
\end{enumerate}

Now, for any $x\in D_{q,p}$ we have $0\le n_0(x)\le q'-1$ and $\dim_{\mathbb C}\mathcal V_x=q'-n_0(x)-1$ is an integer, so $\{\dim_{\mathbb C}\mathcal V_x: x\in D_{q,p}\}\subset [0,q'-1]$ is a finite subset.
In particular, there exists $x'\in D_{q,p}$ such that $\dim_{\mathbb C}\mathcal V_{x'} = \min \{\dim_{\mathbb C}\mathcal V_x: x\in D_{q,p}\}=:k_0$.
Since $f$ is non-constant, $n_0(x')=\max\{n_0(x): x\in D_{q,p}\} \ge 1$ and thus $k_0=q'-n_0(x')-1\le q'-2$.
Then, $\dim_{\mathbb C}\phi^{-1}(x)=\dim_{\mathbb C}\mathcal V_x \ge k_0$ for all $x\in D_{q,p}$.
Thus, we have $D_{q,p}=\mathcal U_{k_0}$ and $\mathcal U_{k_0+1}\subset D_{q,p}$ is a proper complex-analytic subvariety.
In particular, $W:=D_{q,p}\smallsetminus \mathcal U_{k_0+1} \subset D_{q,p}$ is a dense open subset.
Now, we have a surjective holomorphic submersion $\widetilde{\phi}$ $:$ $\widetilde V$ $:=$ $V\cap (W \times D_{q',p'})$ $\to$ $W$ defined by $\widetilde{\phi}(x,z):=x$ for $(x,z)\in \widetilde V$, such that the fibers of $\widetilde{\phi}$ are biholomorphic to $\mathcal V_x \cong \mathbb P^{k_0}$ for all $x\in W$.
In particular, there is a local holomorphic section of $\widetilde{\phi}$ around any point $x_0\in W$.
More precisely, for any $x_0\in W$ there exists an open neighborhood $U_{x_0}$ of $x_0$ in $W$ and a holomorphic map $\widetilde g_{x_0}:U_{x_0}\to \widetilde V$ such that
\[ \widetilde{\phi}(\widetilde g_{x_0}(x))=x \quad \forall\;x\in U_{x_0}. \]
Note that $U_{x_0}\subset D_{q,p}$ is also open.
Thus, we can write $\widetilde g_{x_0}(x)=(x,g_{x_0}(x))$ for all $x\in U_{x_0}$, where $g_{x_0}:U_{x_0}\to D_{q',p'}$ is a holomorphic map.
By definition, we have
\[ f(x^\sharp) \subset (g_{x_0}(x))^\sharp \quad \forall\;x\in U_{x_0}. \]
In particular, $g_{x_0}:U_{x_0}\to D_{q',p'}$ is a local moduli map of $f$.
In addition, for any $x\in W$ we have $q'-n_0(x)-1=\dim_{\mathbb C} \mathcal V_x = k_0$ so that $n_0(x)=q'-k_0-1$, i.e., $f(x^\sharp)$ is contained in a unique $(q'-k_0-1,p')$-subspace of $D^{\mathrm{I}}_{q',p'}$.
In other words, since $W\subset D_{q,p}$ is a dense open subset, $f$ maps a general $(q-1,p)$-subspace of $D^{\mathrm{I}}_{q,p}$ into a unique $(q'-k_0-1,p')$-subspace of $D^{\mathrm{I}}_{q',p'}$.
We now summarize the above results as follows.

\begin{theorem}\label{thm:Behaviour_FIP_maps}
Let $f:D^{\mathrm{I}}_{q,p}\to D^{\mathrm{I}}_{q',p'}$ be a fibral-image-preserving non-constant holomorphic map with respect to the double fibrations
\[ D_{q,p} \xleftarrow{\pi_{q,p}^1} \mathbb P^{q-1}\times D^{\mathrm{I}}_{q,p}\xrightarrow{\pi_{q,p}^2} D^{\mathrm{I}}_{q,p}, \]
 \[ D_{q',p'} \xleftarrow{\pi_{q',p'}^1} \mathbb P^{q'-1}\times D^{\mathrm{I}}_{q',p'} \xrightarrow{\pi_{q',p'}^2} D^{\mathrm{I}}_{q',p'},\]
where $q,q'\ge 2$.
Then, either {\rm(1)} $f$ maps a general $(q-1,p)$-subspace of $D^{\mathrm{I}}_{q,p}$ into a unique $(q'-1,p')$-subspace of $D^{\mathrm{I}}_{q',p'}$ or {\rm(2)} $f$ maps every $(q-1,p)$-subspace of $D^{\mathrm{I}}_{q,p}$ into at least two $(q'-1,p')$-subspaces of $D^{\mathrm{I}}_{q',p'}$.

Let $n_0:= \max\{ n_0(x): x\in D_{q,p} \}$ and $n_0(x):=\min\{ n : f(x^\sharp) \subset D\cong D^{\mathrm{I}}_{n,p'}\subset D^{\mathrm{I}}_{q',p'}\}$, where $x_0^\sharp=\pi^2_{q,p}((\pi^1_{q,p})^{-1}(x_0))\cong D^{\mathrm{I}}_{q-1,p}$.
Write $k_0:=q'-n_0-1$.
Then, $W:=D_{q,p}\smallsetminus \mathcal U_{k_0+1}$ $\subset$ $D_{q,p}$ is a dense open subset and for any $x'\in W$, there exists a local moduli map $g_{x'}$ $:$ $U_{x'}$ $\subset$ $D_{q,p}$ $\to$ $D_{q',p'}$ of $f$, i.e.,
\[ f(x^\sharp)\subset (g_{x'}(x))^\sharp \quad \forall\;x\in U_{x'}, \]
where $U_{x'}\subset D_{q,p}$ is a connected open subset and
\[ \mathcal U_{l} = \left\{ x\in D_{q,p}: f(x^\sharp) \text{ lies in a $(q'-l-1,p')$-subspace of $D^{\mathrm{I}}_{q',p'}$} \right\} \]
for $l=0,1,2,\ldots,q'-1$.
In particular, $f$ maps every $(q-1,p)$-subspace of $D^{\mathrm{I}}_{q,p}$ into a $(q'-k_0-1,p')$-subspace of $D^{\mathrm{I}}_{q',p'}$, and $f$ maps a general $(q-1,p)$-subspace of $D^{\mathrm{I}}_{q,p}$ {\rm(}i.e., $x^\sharp$ for $x\in W${\rm)} into a unique $(q'-k_0-1,p')$-subspace of $D^{\mathrm{I}}_{q',p'}$.
\end{theorem}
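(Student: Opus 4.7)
The plan is to follow the outline developed immediately before the theorem statement in Section \ref{subsection:B_FIPM}, making the successive reductions explicit and justifying each invocation of analytic machinery.

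First, I would associate to each $x\in D_{q,p}$ the projective subspace $\mathcal V_x=\{z\in D_{q',p'}:f(x^\sharp)\subset z^\sharp\}=\bigcap_{y\in f(x^\sharp)} y^\sharp$; by Ng \cite[Proposition 3.2]{Ng:2015} this is isomorphic to $\mathbb P^{q'-n_0(x)-1}$, where $n_0(x)$ is the minimal integer $n$ with $f(x^\sharp)$ contained in some $(n,p')$-subspace. I then assemble these fibres into the incidence variety $V=\{(x,z)\in D_{q,p}\times D_{q',p'}:z\in \mathcal V_x\}$, equipped with the proper projection $\phi=\mathrm{Pr}_1|_V:V\to D_{q,p}$; the fibral-image-preserving hypothesis guarantees $\phi$ is surjective, and $\phi^{-1}(x)\cong \mathcal V_x$.

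Next, I would apply Grauert's semicontinuity theorem to the fibre dimension of $\phi$ to conclude that each level set
\[ \mathcal U_l:=\{x\in D_{q,p}:\dim_{\mathbb C}\phi^{-1}(x)\ge l\}
=\{x\in D_{q,p}:f(x^\sharp)\subset\text{ some }(q'-l-1,p')\text{-subspace}\} \]
is a complex-analytic subvariety of $D_{q,p}$. Because $f$ is non-constant one has $n_0:=\max_x n_0(x)\ge 1$, hence $k_0:=q'-n_0-1\le q'-2$; by definition $D_{q,p}=\mathcal U_{k_0}$ while $\mathcal U_{k_0+1}\subsetneq D_{q,p}$ is a proper analytic subvariety, so $W:=D_{q,p}\smallsetminus\mathcal U_{k_0+1}$ is a dense open subset along which the fibre dimension of $\phi$ is constant equal to $k_0$.

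Then, on $\widetilde V:=V\cap(W\times D_{q',p'})$ the restricted projection $\widetilde\phi:\widetilde V\to W$ has equidimensional fibres biholomorphic to $\mathbb P^{k_0}$, so by standard results it is a (flat) proper holomorphic submersion; in particular it admits local holomorphic sections around every $x'\in W$. Writing such a section as $x\mapsto (x,g_{x'}(x))$ for $x$ in a connected open neighbourhood $U_{x'}\subset W$ gives a holomorphic map $g_{x'}:U_{x'}\to D_{q',p'}$ satisfying $f(x^\sharp)\subset g_{x'}(x)^\sharp$, i.e.\ a local moduli map of $f$. For $x\in W$ the fibre $\mathcal V_x$ has minimal dimension $k_0$, hence $n_0(x)=q'-k_0-1$, which means $f(x^\sharp)$ sits in a unique $(q'-k_0-1,p')$-subspace; this settles the final assertion and also splits into the dichotomy: either $k_0=0$, giving case (1) with uniqueness on a dense open set, or $k_0\ge 1$, giving case (2) where every $x^\sharp$ is sent to a subspace properly contained in infinitely many (in particular $\ge 2$) different $(q'-1,p')$-subspaces.

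I do not anticipate a single hard step; the main point requiring care is checking that the set-theoretic descriptions of $\mathcal V_x$ and $\mathcal U_l$ really yield complex-analytic structures suitable for semicontinuity, and that the submersion property on $W$ is genuine rather than only a set-theoretic equidimensionality statement—this is where Ng's identification of fibral images with projective subspaces and the properness of $\phi$ combine to make the argument go through smoothly.
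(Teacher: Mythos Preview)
Your proposal is correct and follows essentially the same approach as the paper: you reproduce the incidence variety $V$, the proper projection $\phi$, the semicontinuity argument for the $\mathcal U_l$, and the local-section construction on the dense open set $W$ exactly as in Section~\ref{subsection:B_FIPM}. The only cosmetic difference is that you derive the dichotomy (1)/(2) at the end from the value of $k_0$, whereas the paper states it first as a consequence of whether some $x_0$ has $\dim_{\mathbb C}\mathcal V_{x_0}=0$; the content is identical.
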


Now, we observe that one can obtain moduli maps for a fibral-image-preserving holomorphic map by making use of another generalized double fibration introduced by Ng \cite[p.\,11]{Ng:2015}.

\begin{proposition}\label{prop:exist_global_moduli_map}
Let $f$ and $k_0$ be as in Theorem \ref{thm:Behaviour_FIP_maps}.
Write $l:=k_0+1$.
Then, there exist meromorphic maps $g_j:U \subset D_{q,p}\to D_{q',p'}$, $1\le j\le l$, such that $f(x^\sharp)\subset (g_j(x))^\sharp$ for all $x\in U$, i.e., $g_j:U\subset D_{q,p}\to D_{q',p'}$ is a moduli map of $f$ with respect to the double fibrations
\[ D_{q,p} \xleftarrow{\pi_{q,p}^1} \mathbb P^{q-1}\times D^{\mathrm{I}}_{q,p}\xrightarrow{\pi_{q,p}^2} D^{\mathrm{I}}_{q,p}, \]
 \[ D_{q',p'} \xleftarrow{\pi_{q',p'}^1} \mathbb P^{q'-1}\times D^{\mathrm{I}}_{q',p'} \xrightarrow{\pi_{q',p'}^2} D^{\mathrm{I}}_{q',p'},\]
where $U\subset D_{q,p}$ is a dense open subset.
\end{proposition}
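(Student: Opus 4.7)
The plan is to first lift $f$ to a meromorphic map $\tilde g: D_{q,p} \dashrightarrow D^l_{q', p'}$ into Ng's moduli space of $(q'-l, p')$-subspaces of $D^I_{q', p'}$, and then extract the $l$ desired moduli maps $g_j: U \to D_{q', p'}$ by choosing a local section of the natural $GL(l, \mathbb{C})$-bundle over $D^l_{q', p'}$.

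On the dense open subset $W = D_{q, p} \smallsetminus \mathcal{U}_{k_0 + 1}$ provided by Theorem \ref{thm:Behaviour_FIP_maps}, $f(x^\sharp)$ lies in a unique $(q' - l, p')$-subspace $\Pi_x$ of $D^I_{q', p'}$ for each $x \in W$. Setting $\tilde g(x) := \Pi_x$ defines a map from $W$ to $D^l_{q', p'}$, whose holomorphicity can be proved in direct analogy with Proposition \ref{Prop:Pro2.15_Ng15}: around a point $x_0 \in W$, the elements $[A, B]_{q'} \in D_{q', p'}$ whose fibral image contains $f(x^\sharp)$ form the joint zero set of finitely many linear equations in $(A, B)$ whose coefficients depend holomorphically on $x$, and the upper semicontinuity argument already used inside the proof of Theorem \ref{thm:Behaviour_FIP_maps} pins the dimension of this zero set at exactly $l - 1$ on a neighborhood of $x_0$. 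Because $D^l_{q', p'}$ embeds as an open subset of a Grassmannian, Hartogs-type extension then yields a meromorphic extension $\tilde g: D_{q, p} \dashrightarrow D^l_{q', p'}$.

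To extract the $l$ moduli maps, I observe that every $\Pi \in D^l_{q', p'}$ admits a presentation $\Pi = \{Z \in D^I_{q', p'}: A Z = B\}$ for some $A \in M(l, q'; \mathbb{C})$ of full row rank and $B \in M(l, p'; \mathbb{C})$ with $A \overline{A}^T - B \overline{B}^T > 0$, uniquely up to the left $GL(l, \mathbb{C})$-action; the $l$ rows $[A^{(j)}, B^{(j)}]_{q'}$, $1 \le j \le l$, are then elements of $D_{q', p'}$ satisfying $\Pi = \bigcap_{j=1}^l [A^{(j)}, B^{(j)}]_{q'}^\sharp$. This is exactly the content of the double fibration for $D^l_{q', p'}$ recalled in Ng \cite[p.\,11]{Ng:2015}. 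Around a generic point of $U$, I would pick the standard affine chart of $D^l_{q', p'}$ that normalizes a chosen $l \times l$ block of $A$ to $\mathbf{I}_l$; this trivializes the $GL(l, \mathbb{C})$-bundle and produces $l$ meromorphic maps $g_j(x) := [A^{(j)}(x), B^{(j)}(x)]_{q'}$ into $D_{q', p'}$. By construction $f(x^\sharp) \subset \Pi_x \subset (g_j(x))^\sharp$ on the common domain of definition, so each $g_j$ is a moduli map of $f$ in the sense of Section \ref{Sec:DF_FIPM}. The hard part is verifying holomorphicity of the lift $\tilde g$ on $W$ and its meromorphic extension across $\mathcal{U}_{k_0+1}$; once $\tilde g$ is in hand, the extraction of the $l$ rows via a local trivialization is routine.
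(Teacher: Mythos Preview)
Your proposal is correct and follows essentially the same strategy as the paper: both lift $f$ to a map $\tilde g$ into $D^l_{q',p'}$ using the uniqueness on $W=D_{q,p}\smallsetminus\mathcal U_{k_0+1}$, then read off the $l$ row maps $g_j$ into $D_{q',p'}$. The only technical difference is in how the meromorphic extension across $\mathcal U_{k_0+1}$ is secured: the paper works with the incidence variety $\mathcal G=\{(x,W):f(x^\sharp)\subset W^\sharp\}\subset D_{q,p}\times D^l_{q',p'}$, takes the closure of the graph of $\tilde g|_W$ inside it, and passes to the irreducible component dominating $D_{q,p}$, whereas you invoke a Hartogs-type extension into the Grassmannian; both are valid here and amount to the same thing.
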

\begin{proof}
By Theorem \ref{thm:Behaviour_FIP_maps}, $f$ maps every $(q-1,p)$-subspace into a $(q'-l,p')$-subspace, and $f$ maps a general $(q-1,p)$-subspace into a unique $(q'-l,p')$-subspace.
As in Ng \cite[p.\,11]{Ng:2015}, we may also consider the double fibration
\[ G(l,q'+p'-l) \supset D^l_{q',p'} \xleftarrow{\pi_l} G(l,q'-l) \times D^{\mathrm{I}}_{q',p'} \xrightarrow{\pi_{q'}} D^{\mathrm{I}}_{q',p'}, \]
where $\pi_l([X],[{\bf I}_{q'},Z']_{q'}) := [X,XZ']_{q'}$, $\pi_{q'}([X],[{\bf I}_{q'},Z']_{q'}):=[{\bf I}_{q'},Z']_{q'}$, and
\[ D^l_{q',p'}:= \left\{[W',W'']_{q'}\in G(l,q'+p'-l): W'\overline{W'}^T-W''\overline{W''}^T >0\right\} \]
with $W'\in M(l,q';\mathbb C)$ and $W''\in M(l,p';\mathbb C)$.
For any $[W',W'']_{q'}\in D^l_{q',p'}$, we have
\[\begin{split}
 [W',W'']_{q'}^\sharp:=&\pi_{q'}(\pi_l^{-1}([W',W'']_{q'}))
=\{Z\in D^{\mathrm{I}}_{q',p'}: W'Z=W''\} \\
\cong& D^{\mathrm{I}}_{q'-l,p'}. 
\end{split}\]
On the other hand, in this proof, for any $Z'\in D^{\mathrm{I}}_{q',p'}$ we write
\[ Z'^\sharp :=[{\bf I}_{q'},Z']_{q'}^\sharp 
= \left\{ [W',W'Z']_{q'} \in D^l_{q',p'}: [W']\in G(l,q'-l)\right\}
\cong G(l,q'-l). \]
For any $x\in D_{q,p}$ we define
\[ \mathcal V_x:=\left\{ W=[W',W'']_{q'} \in D^l_{q',p'}: f(x^\sharp) \subset W^\sharp \right\}
= \bigcap_{y\in f(x^\sharp)} y^\sharp, \]
which is a compact complex-analytic subvariety of $D^l_{q',p'}$.
Let
\[ \mathcal G:=\left\{(x,W)\in D_{q,p} \times D^l_{q',p'}: W\in \mathcal V_x\right\}. \]
Then, $\mathcal G$ is a complex-analytic variety.
Consider the projection map $\phi:=\mathrm{Pr}_1|_{\mathcal G}:\mathcal G \to D_{q,p}$ onto the first factor.
We can deduce from Theorem \ref{thm:Behaviour_FIP_maps} that 
\[ V:=\{ x\in D_{q,p}: \dim_{\mathbb C}\mathcal V_x \ge 1\} \]
is a proper complex-analytic subvariety of $D_{q,p}$.
Then, for any $x\in D_{q,p}\smallsetminus V$ we have $\mathcal V_x=\{g(x)\}$ for some point $g(x)\in D^l_{q',p'}$ and thus $\phi^{-1}(x)=\{x\}\times \mathcal V_x = \{(x,g(x))\}$.
Note that $D_{q,p}\smallsetminus V\subset D_{q,p}$ is a dense open subset .
Moreover, the set
\[ S:=\left\{(x,W)\in  \mathcal G: \dim_{(x,W)} \phi^{-1}(\phi(x,W)) \ge 1 \right\}=\phi^{-1}(V) \]
is a proper complex-analytic subvariety of $\mathcal G$, and we have a biholomorphism $\phi|_{\mathcal G\smallsetminus S}$ $:$ $ \mathcal G\smallsetminus S$ $\to$ $D_{q,p} \smallsetminus V$.
Note that 
\[ \{(x,g(x))\in \mathcal G: x\in D_{q,p} \smallsetminus V\}=\mathcal G\smallsetminus \mathrm{Pr}_1^{-1}(V) \subset (D_{q,p}\times D^l_{q',p'} )\smallsetminus \mathrm{Pr}_1^{-1}(V)\]
(resp.\,$\mathrm{Pr}_1^{-1}(V)\subset D_{q,p}\times D^l_{q',p'}$) is a complex-analytic subvariety.
Letting $\mathcal G'$ be the closure of $\mathcal G\smallsetminus \mathrm{Pr}_1^{-1}(V)$ in $D_{q,p}\times D^l_{q',p'}$, $\mathcal G'\subset D_{q,p}\times D^l_{q',p'}$ is a complex-analytic subvariety.
We now restrict $\mathrm{Pr}_1$ to the irreducible component of $\mathcal G'$ containing $\mathcal G\smallsetminus S$.
This yields a meromorphic map $g:D_{q,p} \to D^l_{q',p'} \subset G(l,q'+p'-l)$ defined on $D_{q,p}\smallsetminus V$ such that $\mathcal G\smallsetminus S$ is the graph of $g$ and
\[ f([A,B]_q^\sharp)\subset g([A,B]_q)^\sharp \cong D^{\mathrm{I}}_{q'-l,p'} \quad \forall \; [A,B]_q\in D_{q,p}\smallsetminus V=:U. \]
(Noting that $g:D_{q,p}\smallsetminus V \to D^l_{q',p'}$ is holomorphic.)

Writing $g=\begin{bmatrix}
g_1^T,\ldots,g_l^T
\end{bmatrix}^T$, where $g_j$, $1\le j\le l$, are row vectors of $g$, each $g_j$ yields a meromorphic map $g_j:D_{q,p}\to D_{q',p'}$ such that $f(x^\sharp)\subset (g_j(x))^\sharp$ for all $x\in U$.
More precisely, writing $g_j=[g_{j,1};g_{j,2}]_{q'}$, for any $[A,B]_q\in U$ we have
\[ g_{j,1}([A,B]_q)f(Z)=g_{j,2}([A,B]_q) \]
for all $Z\in [A,B]_q^\sharp$.
This shows that each $g_j:U\subset D_{q,p}\to D_{q',p'}$ is a moduli map of $f$.  
\end{proof}

\section{Case (2) of Conjecture \ref{Conj1}}
We will focus on Case (2) of Conjecture \ref{Conj1} in this section, namely, we study proper holomorphic maps from $D^{\mathrm{I}}_{p,q}$ to $D^{\mathrm{I}}_{p',q'}$, where $p\ge q\ge 2$ and $q'<\min\{p,2q-1\}$.

\subsection{Solution to Case (2) of Conjecture \ref{Conj1} when $q'=q+1$}\label{Section:corank_one}
Let $r,s,r'$ and $s'$ be positive integers such that $s'<\min\{r,2s-1\}$ and $r\ge s\ge 2$. If $s'-s=1$, then it is clear that the conditions $s'<\min\{r,2s-1\}$ and $r\ge s\ge 2$ become $3\le s\le r-2$.

By making use of Proposition \ref{Pro:RatProperM_FIP1}, we obtain a solution to Case (2) of Conjecture \ref{Conj1} under the additional assumption that $q'=q+1$, as follows.
\begin{theorem}\label{thm:proper4}
Let $f:D^{\mathrm{I}}_{p,q}\to D^{\mathrm{I}}_{p',q+1}$ be a proper holomorphic map, where $p,q$ and $p'$ are positive integers such that $3\le q\le p-2$.
Then, $p'\ge p$ and $f$ is of diagonal type.
\end{theorem}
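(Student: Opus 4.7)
The plan is to reduce the problem to the rigidity of rational proper fibral-image-preserving maps between generalized complex balls, and then translate back via the moduli-map dictionary.

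First I would verify $p'\ge p$: since $q'=q+1\le 2q-1$ (using $q\ge 3$), this is the last assertion of Proposition \ref{prop:sp_to_sp1}. The same proposition also gives that $f^{\dagger}:D^{\mathrm{I}}_{q,p}\to D^{\mathrm{I}}_{q+1,p'}$ is a proper holomorphic map sending every $(q-1,p)$-subspace into a $(q,p')$-subspace, and then Lemma \ref{lem:sp_to_sp0} together with Proposition \ref{Pro:FIP1} identifies $f^{\dagger}$ as fibral-image-preserving with respect to the pair of double fibrations for $D_{q,p}$ and $D_{q+1,p'}$.

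Next I would apply Theorem \ref{thm:Behaviour_FIP_maps} to $f^{\dagger}$, producing the dichotomy: either (A) a general $(q-1,p)$-subspace is sent into a unique $(q,p')$-subspace, or (B) every $(q-1,p)$-subspace is sent into the intersection of at least two. In Case (A), Proposition \ref{Pro:Pro3.9_NTY} applies since its hypothesis translated to the present parameters reads $p\ge q+1\ge 3$, which holds by $p\ge q+2$ and $q\ge 3$; it supplies a rational proper moduli map $\widetilde g:D_{q,p}\dashrightarrow D_{q+1,p'}$ that is automatically fibral-image-preserving. The inequalities $3\le q\le p-2$ and $p'\ge p$ are precisely the hypotheses of Proposition \ref{Pro:RatProperM_FIP1}, which forces $\widetilde g$ to be linear and equivalent to $[A,B]_q\mapsto [A,0;B,{\bf 0}]_{q+1}$. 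Lemma \ref{lem:Seo15_Remark4.3} then lets me absorb this equivalence into $f^{\dagger}$, so that after replacing $f^{\dagger}$ by an equivalent map the standard linear embedding becomes an honest moduli map; Lemma \ref{lem:Ch_di_type} identifies the normalized $f^{\dagger}$ with $G_h$ for some holomorphic $h$, i.e., $f^{\dagger}$ is of diagonal type, and Lemma \ref{lem:trans2} transfers the conclusion to $f$.

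The main obstacle is Case (B). By Theorem \ref{thm:Behaviour_FIP_maps} the image of a generic $(q-1,p)$-subspace lies in a $(q-k_0,p')$-subspace with $k_0\ge 1$, and a boundary-limit argument using properness of $f^{\dagger}$ (the closure of a $(q-1,p)$-subspace meets $\partial D^{\mathrm{I}}_{q,p}$, so $f^{\dagger}$-images approach $\partial D^{\mathrm{I}}_{q+1,p'}$ inside the target subspace) shows that the restriction $f^{\dagger}|_{\Sigma}:D^{\mathrm{I}}_{q-1,p}\to D^{\mathrm{I}}_{q-k_0,p'}$ is proper; Proposition \ref{Prop:rk_nondecrease1} then forces $q-1\le q-k_0$, i.e., $k_0=1$. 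In the remaining subcase the restrictions are equal-rank proper holomorphic maps of rank $q-1\ge 2$, so Tsai's theorem makes them totally geodesic. I would rule this subcase out by examining the meromorphic moduli map $g:D_{q,p}\dashrightarrow D^1_{q+1,p'}$ provided by Proposition \ref{prop:exist_global_moduli_map}: an argument patterned on Proposition \ref{Pro:Pro3.9_NTY} should show that the two row components $g_1,g_2$ of $g$ are both rational proper maps $D_{q,p}\dashrightarrow D_{q+1,p'}$, and then applying Proposition \ref{pro:rat_proper_FIP_2} to each would force $g_1$ and $g_2$ to coincide with the same linear standard embedding, contradicting the hypothesis of Case (B) that they determine two distinct fibral images. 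Pushing this contradiction through, in particular verifying rational-properness of the $g_j$ in the higher-codimension double-fibration setting, is the technical heart of the argument; the expected outcome is that only Case (A) survives, and the analysis of Case (A) then concludes the theorem.
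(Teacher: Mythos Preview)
Your Case (A) is correct and matches the paper's argument exactly: Proposition \ref{Pro:Pro3.9_NTY} supplies a rational proper moduli map, Proposition \ref{Pro:RatProperM_FIP1} linearizes it, and Lemmas \ref{lem:Seo15_Remark4.3}, \ref{lem:Ch_di_type}, \ref{lem:trans2} finish.

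Your Case (B), however, is aimed at the wrong target. You try to \emph{rule out} Case (B) by a contradiction, but Case (B) genuinely occurs: when $f$ is the standard embedding, $f^\dagger$ sends every $(q-1,p)$-subspace into a $(q-1,p')$-subspace, which sits inside infinitely many $(q,p')$-subspaces. So no contradiction is available. Concretely, your final step fails: even if $g_1,g_2$ were both rational proper and fibral-image-preserving, Proposition \ref{pro:rat_proper_FIP_2} only says each is \emph{equivalent} to the standard linear embedding, not that they are equal; two distinct linear embeddings can share $f^\dagger$ as a moduli map and still determine distinct fibral images, which is exactly what happens for standard $f$. The rational-properness of the individual $g_j$ is also not established anywhere (Proposition \ref{prop:exist_global_moduli_map} only gives meromorphic row components; the properness argument in Proposition \ref{Pro:Pro3.9_NTY} uses uniqueness of the target subspace, which is precisely what Case (B) denies).

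The paper handles Case (B) in one line, and you already have the ingredients. You correctly observe that $k_0=1$ and that the restrictions $f^\dagger|_{x^\sharp}:D^{\mathrm{I}}_{q-1,p}\to D^{\mathrm{I}}_{q-1,p'}$ are equal-rank proper maps with rank $q-1\ge 2$, hence totally geodesic by Tsai. The conclusion to draw is not a contradiction but that $f^\dagger$ (and hence $f$) is \emph{standard}: the $(q-1,p)$-subspaces cover $D^{\mathrm{I}}_{q,p}$, the restrictions are all standard (in particular linear after normalization), and a coherence argument on overlapping subspaces forces the global map to be standard. A standard map is of diagonal type, so Case (B) is done without any moduli-map machinery.
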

\begin{proof}
Write $q':=q+1$.
Then, it follows from $3\le q\le p-2$ that $q'=q+1<2q-1$ and $q'=q+1 \le p-1 < p$, i.e., $q'<\min\{p,2q-1\}$.
Actually, we have seen that the condition $3\le q\le p-2$ is equivalent to the conditions $q+1=q'<\min\{p,2q-1\}$ and $p\ge q\ge 2$.
On the other hand, we have $p'\ge p$ by Proposition \ref{prop:proper2}.

By Proposition \ref{Pro:FIP1}, $f$ maps every $(p,q-1)$-subspace of $D^{\mathrm{I}}_{p,q}$ into a $(p',q'-1)$-subspace of $D^{\mathrm{I}}_{p',q'}$.
If $f$ maps every $(p,q-1)$-subspace of $D^{\mathrm{I}}_{p,q}$ into more than one $(p',q'-1)$-subspace of $D^{\mathrm{I}}_{p',q'}$, then $f$ maps every $(p,q-1)$-subspace of $D^{\mathrm{I}}_{p,q}$ into a $(p',q'-2)$-subspace $D^{\mathrm{I}}_{p',q'}$.
Note that we have $q'-2=q-1 \ge 2$ so that $\mathrm{rank}(D^{\mathrm{I}}_{p,q-1})=q-1=q'-2 = \mathrm{rank}(D^{\mathrm{I}}_{p',q'-2}) \ge 2$.
By Tsai \cite{Tsai:1993}, the restriction of $f$ to every $(p,q-1)$-subspace of $D^{\mathrm{I}}_{p,q}$ is standard. Therefore, $f$ itself is standard and thus of diagonal type. The proof is complete in this situation.
Now, we let 
\[ V:=\left\{[A,B]_q\in D_{q,p}: 
\begin{split}
& f^\dagger([A,B]_q^\sharp)\text{ is contained in more than}\\
&\text{one $(q'-1,p')$-subspace of $D^{\mathrm{I}}_{q',p'}$}
\end{split}\right\} \]
be a subset of $D_{q,p}$.
Then, $V^\sharp = \bigcup_{[A,B]_q\in V} [A,B]_q^\sharp \subseteq D^{\mathrm{I}}_{q,p}$ is a union of $(q-1,p)$-subspaces.
If $V\subset D_{q,p}$ has nonempty interior so that $V^\sharp$ contains a nonempty open subset $W$ of $D^{\mathrm{I}}_{q,p}$, then the above arguments imply that since $W=\bigcup_{[A,B]_q\in V} (W\cap [A,B]_q^\sharp)$ is open and $f^\dagger|_{W\cap [A,B]_q^\sharp}$ is standard for all $[A,B]_q\in V$ such that $W\cap [A,B]_q\neq \varnothing$, the whole map $f^\dagger$ is standard.
In particular, $f$ is standard. In this case, the proof is complete as before.

Now, we may suppose $V\subset D_{q,p}$ has empty interior so that $D_{q,p} \smallsetminus V$ is a dense subset.
Then, $f^\dagger$ maps a general $(q-1,p)$-subspace of $D^{\mathrm{I}}_{q,p}$ (i.e.,\,a $(q-1,p)$-subspace $[A,B]_q^\sharp$ with $[A,B]_q\in D_{q,p}\smallsetminus V$) into a unique $(q'-1,p')$-subspace of $D^{\mathrm{I}}_{q',p'}$.
Since $p>q'=q+1\ge 3$, by Ng-Tu-Yin \cite[Proposition 3.9]{NTY:2016} (see Proposition \ref{Pro:Pro3.9_NTY}), $f^\dagger$ has a moduli map $g$ $:$ $D_{q,p}$ $\dashrightarrow$ $D_{q',p'}$ $=$ $D_{q+1,p'}$ such that $g$ is a rational map, i.e., $g$ extends to a rational map $\hat g$ $:$ $\mathbb P^{q+p-1}$ $\to$ $\mathbb P^{q+p'}$, and $g$ is proper.

Let $U\subset D_{q,p}$ be the domain of $g$.
Then, $g:U\subset D_{q,p}\to D_{q',p'}=D_{q+1,p'}$ is a rational proper map which is fibral-image-preserving with respect to the double fibrations
\[ D_{q,p} \xleftarrow{\pi_{q,p}^1} \mathbb P^{q-1}\times D^{\mathrm{I}}_{q,p}\xrightarrow{\pi_{q,p}^2} D^{\mathrm{I}}_{q,p}, \]
\[ D_{q+1,p'} \xleftarrow{\pi_{q+1,p'}^1} \mathbb P^{q}\times D^{\mathrm{I}}_{q+1,p'} \xrightarrow{\pi_{q+1,p'}^2} D^{\mathrm{I}}_{q+1,p'}.\]
By Proposition \ref{Pro:RatProperM_FIP1}, $g$ is linear, i.e., $\deg(g)=1$.
Thus, $g$ is equivalent to the map
\[ D_{q,p}\ni [A;B]_q\mapsto [A,0;B,{\bf 0}]_{q+1}\in D_{q+1,p'} \]
so that $f^\dagger$ is equivalent to the map
\[ Z\mapsto \begin{bmatrix}
Z & {\bf 0}\\
{\bf 0} & h(Z)
\end{bmatrix} \]
for some holomorphic map $h:D^{\mathrm{I}}_{q,p}\to D^{\mathrm{I}}_{1,p'-p}$ by Lemma \ref{lem:Ch_di_type} and Seo \cite[Remark 4.3]{Seo:2015} (see Lemma \ref{lem:Seo15_Remark4.3}). In particular, $f^\dagger$ is of diagonal type, and so is $f$ by Lemma \ref{lem:trans2}.  
\end{proof}

We may now prove a special case of the main theorem, i.e., Theorem \ref{thm:MT}, when $q'=q+1$, as follows.

\begin{proof}[Proof of Theorem {\rm \ref{thm:MT}} when $q'=q+1$]
By Proposition \ref{prop:proper2}, we have $p'\ge p$ and $q'\ge q$.
Assume $q'=q+1$. Then, the rest follows directly from the proof of Theorem \ref{thm:proper4}. This finishes the proof of Theorem \ref{thm:MT} for the case where $q'=q+1$.  
\end{proof}

\subsection{Proof of Theorem \ref{thm:MT}}
We observe that results obtained in Section \ref{subsection:B_FIPM} are actually important to our study on proper holomorphic maps between type-$\mathrm{I}$ domains.
Based on these results, we can show the existence of global (meromorphic) moduli maps of proper holomorphic maps between certain type-$\mathrm{I}$ domains, as follows.

\begin{proposition}\label{pro:general_diag_type}
Let $f:D^{\mathrm{I}}_{q,p}\to D^{\mathrm{I}}_{q',p'}$ be a proper holomorphic map, where $p\ge q\ge 2$ and $p>q'\ge 3$.
Then, $f$ is fibral-image-preserving with respect to the double fibrations
\[ D_{q,p} \xleftarrow{\pi_{q,p}^1} \mathbb P^{q-1}\times D^{\mathrm{I}}_{q,p}\xrightarrow{\pi_{q,p}^2} D^{\mathrm{I}}_{q,p}, \]
 \[ D_{q',p'} \xleftarrow{\pi_{q',p'}^1} \mathbb P^{q'-1}\times D^{\mathrm{I}}_{q',p'} \xrightarrow{\pi_{q',p'}^2} D^{\mathrm{I}}_{q',p'}, \]
and there exists a rational proper map $g:U \subset D_{q,p}\to D_{q',p'}$ such that $f(x^\sharp)\subset (g(x))^\sharp$ for all $x\in U$, i.e., $g:U\subset D_{q,p}\to D_{q',p'}$ is a moduli map of $f$, where $U$ is a dense open subset.
If in addition that $q'<2q-1$, then $f$ is of diagonal type.
\end{proposition}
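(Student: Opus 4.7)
The plan is to establish the three assertions in turn. First I would check that $f$ is fibral-image-preserving by passing to the transpose $f^\dagger : D^{\mathrm{I}}_{p,q} \to D^{\mathrm{I}}_{p',q'}$, whose hypotheses $p \ge q \ge 2$ and $q' < p$ are exactly those of Proposition \ref{prop:sp_to_sp1}. That proposition forces $f = (f^\dagger)^\dagger$ to send every $(q-1,p)$-subspace of $D^{\mathrm{I}}_{q,p}$ into a $(q'-1,p')$-subspace of $D^{\mathrm{I}}_{q',p'}$, and Lemma \ref{lem:sp_to_sp0} then delivers fibral-image-preservation with respect to the stated double fibrations.

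For the rational proper moduli map, I would apply Theorem \ref{thm:Behaviour_FIP_maps} to split into its two cases. In case (1), where a general $(q-1,p)$-subspace is mapped into a unique $(q'-1,p')$-subspace, Proposition \ref{Pro:Pro3.9_NTY} applies (with its notational $(p,q,p',q')$ playing the role of our $(q,p,q',p')$); its properness clause ``$q \ge p' \ge 3$'' reads ``$p \ge q' \ge 3$'' here, which follows from $p > q' \ge 3$. This produces the desired $g:U\subset D_{q,p}\to D_{q',p'}$. In case (2), with $k_0 \ge 1$ in the notation of Theorem \ref{thm:Behaviour_FIP_maps}, restricting $f$ to a $(q-1,p)$-subspace and applying Proposition \ref{Prop:rk_nondecrease1} forces $k_0 \le q' - q$; I would argue, in the spirit of the proof of Theorem \ref{thm:proper4}, that Tsai's equal-rank rigidity (when $k_0 = q' - q$) combined with iterated restrictions (for smaller $k_0$) forces $f$ to be already standard, whereupon the standard linear embedding of $D_{q,p}$ into $D_{q',p'}$ is manifestly a rational proper moduli map by Lemma \ref{lem:Ch_di_type}. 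Alternatively, Proposition \ref{prop:exist_global_moduli_map} supplies meromorphic moduli maps that automatically extend to rational maps between the ambient projective spaces, and properness can be recovered by adapting the boundary-tracking argument inside the proof of Proposition \ref{Pro:Pro3.9_NTY}.

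Third, assume $q' < 2q - 1$. Then Proposition \ref{prop:sp_to_sp1} applied to $f^\dagger$ gives $p \le p'$, while rank-nondecrease gives $q \le q'$. The moduli relation $g_1([A,AZ]_q) f(Z) = g_2([A,AZ]_q)$ immediately shows $g(Z^\sharp \cap U) \subset f(Z)^\sharp$, so $g$ itself is fibral-image-preserving with respect to the same double fibrations. All the hypotheses of Proposition \ref{pro:rat_proper_FIP_2} are now in force, forcing $g$ to be linear and equivalent to the standard embedding
\[ [z_1, \ldots, z_q; z_{q+1}, \ldots, z_{q+p}]_q \mapsto [z_1, \ldots, z_q, \mathbf{0}; z_{q+1}, \ldots, z_{q+p}, \mathbf{0}]_{q'}. \]
By Lemma \ref{lem:Seo15_Remark4.3}, $f$ is equivalent to a map having this standard embedding as its moduli map, and Lemma \ref{lem:Ch_di_type} identifies any such map as some $G_h$ in the strict case $p' > p$ and $q' > q$. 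The boundary cases $p' = p$ or $q' = q$ are handled directly by Proposition \ref{prop:proper2}, which already yields standardness and hence diagonal type.

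The principal obstacle is case (2) of Theorem \ref{thm:Behaviour_FIP_maps}: once uniqueness of the enveloping $(q'-1,p')$-subspace fails, Proposition \ref{Pro:Pro3.9_NTY} does not apply verbatim, and showing either that $f$ is already standard or that a rational proper moduli map into $D_{q',p'}$ (as opposed to the larger moduli space $D^{k_0+1}_{q',p'}$ of Proposition \ref{prop:exist_global_moduli_map}) still exists appears to require a delicate induction combining Tsai's rigidity on successive restrictions with the boundary-tracking technique of Ng.
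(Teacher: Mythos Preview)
Your treatment of the first and third assertions matches the paper's. For the first, the paper cites Proposition~\ref{prop:proper2} (which, as you correctly observe, really needs Proposition~\ref{prop:sp_to_sp1} applied to $f^\dagger$, since $q'<2q-1$ is not yet assumed) together with Lemma~\ref{lem:sp_to_sp0}. For the third, the paper does exactly what you propose: once $q'<2q-1$, Proposition~\ref{prop:proper2} gives $q\le q'$ and $p\le p'$, the moduli relation makes $g$ fibral-image-preserving, Proposition~\ref{pro:rat_proper_FIP_2} linearizes $g$, and then Lemmas~\ref{lem:Seo15_Remark4.3} and~\ref{lem:Ch_di_type} (packaged as ``the same arguments as in the proof of Theorem~\ref{thm:proper4}'') finish.

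Where you diverge from the paper is the second assertion, and here you have made your own life harder. The paper does \emph{not} split into the two cases of Theorem~\ref{thm:Behaviour_FIP_maps}; it invokes Proposition~\ref{prop:exist_global_moduli_map} directly. Reread that proposition's statement: the output is a collection of meromorphic maps $g_j:U\subset D_{q,p}\to D_{q',p'}$, already landing in $D_{q',p'}$ and \emph{not} merely in the larger space $D^{k_0+1}_{q',p'}$ (the latter is only an intermediate step in the proof; the $g_j$ are the row vectors of the map into $D^{k_0+1}_{q',p'}$). Any one of these $g_j$ is a global meromorphic moduli map of $f$ into $D_{q',p'}$, regardless of whether $k_0=0$ or $k_0\ge 1$. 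Rationality and properness then follow, as the paper says in one line, ``by arguments in the proof of Proposition~\ref{Pro:Pro3.9_NTY}'' --- the Hartogs-type extension for rationality and the boundary-tracking (Lemma~\ref{lem:sp_to_sp1} with $p\ge q'\ge 3$) for properness. So your ``alternative'' \emph{is} the paper's proof, and your ``principal obstacle'' dissolves once you notice that Proposition~\ref{prop:exist_global_moduli_map} already hands you maps into $D_{q',p'}$ without any uniqueness hypothesis. Your primary case-(2) route via Tsai rigidity and iterated restrictions is, as you suspect, incomplete (properness of restrictions is not automatic, and the induction for $1\le k_0<q'-q$ is not spelled out), but it is also unnecessary.
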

\begin{proof}
By Proposition \ref{prop:proper2}, $f$ maps every $(q-1,p)$-subspace into a $(q'-1,p')$-subspace so that $f$ is fibral-image-preserving with the given double fibrations.
Then, by Proposition \ref{prop:exist_global_moduli_map}, there exists a meromorphic map $g:U \subset D_{q,p}\to D_{q',p'}$ such that $f(x^\sharp)\subset (g(x))^\sharp$ for all $x\in U$, i.e., $g:U\subset D_{q,p}\to D_{q',p'}$ is a moduli map of $f$, where $U$ is a dense open subset.
By arguments in the proof of Proposition \ref{Pro:Pro3.9_NTY}, $g:D_{q,p}\dashrightarrow D_{q',p'}$ is indeed a rational proper map.

Assume $q'<2q-1$. By Proposition \ref{prop:proper2}, we have $p\le p'$ and $q\le q'$.
It then follows from Proposition \ref{pro:rat_proper_FIP_2} that $g$ is linear, i.e., $\deg(g)=1$, and $g$ is equivalent to the map
\[ [z_1,\ldots,z_q;z_{q+1},\ldots,z_{q+p}]_q\mapsto
[z_1,\ldots,z_q,{\bf 0};z_{q+1},\ldots,z_{q+p},{\bf 0}]_{q'}. \]
By the same arguments in the proof of Theorem \ref{thm:proper4}, $f$ is of diagonal type.  
\end{proof}
\begin{remark}
	Proposition \ref{pro:general_diag_type} actually gives a way to handle Case {\rm (1)} of Conjecture {\rm \ref{Conj1}}. 
	Let $g: D_{q,p}\to D_{q',p'}$ be a rational proper map which maps every maximal projective linear subspace contained in $D_{q,p}$ into a maximal projective linear subspace contained in $D_{q',p'}$, where $p\ge q\ge 2$, $q'<p$ and $p'<2p-1$.
	It is unknown if such a map $g$ is linear. Indeed, Case {\rm (1)} of Conjecture {\rm \ref{Conj1}} could be solved provided that one proved the linearity of such a map $g$.
\end{remark}

Now, we are ready to prove Theorem \ref{thm:MT}, as follows.

\begin{proof}[Proof of Theorem {\rm \ref{thm:MT}}]
Note that $D^{\mathrm{I}}_{p,q}$ is of rank $q$.
By Proposition \ref{prop:proper2}, we have $p\le p'$ and $q\le q'$ so that $D^{\mathrm{I}}_{p',q'}$ is of rank $q'$ since $q'<p$.
If $q'=2$, then we have $q=q'$ and thus $f$ is standard by Tsai \cite[Main Theorem]{Tsai:1993}. In particular, $f$ is of diagonal type when $q'=2$.
Now, we suppose $q'\ge 3$. Then, by Proposition \ref{pro:general_diag_type}, $f^\dagger$ is of diagonal type, and so is $f$ by Lemma \ref{lem:trans2}, as desired.
\end{proof}

Finally, we observe that Proposition \ref{pro:general_diag_type} and the known results also have the following implication.

\begin{proposition}\label{Prop:consequences}
Let $f:D^{\mathrm{I}}_{p,q}\to D^{\mathrm{I}}_{p',q'}$ be a proper holomorphic map, where $p\ge q\ge 2$ and $p>q'$.
Then, we have $q \le q'$ and $p\le p'$.
\end{proposition}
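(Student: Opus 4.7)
The plan is to treat the two inequalities separately: $q\le q'$ is immediate from rank considerations, while $p\le p'$ requires a case split on the size of $q'$ relative to $2q-1$.

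For the inequality $q\le q'$, I would invoke Proposition~\ref{Prop:rk_nondecrease1}: properness of $f$ gives $q=\mathrm{rank}(D^{\mathrm{I}}_{p,q})\le\mathrm{rank}(D^{\mathrm{I}}_{p',q'})=\min\{p',q'\}$, which yields both $q\le q'$ and, for later use, $q\le p'$.

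For $p\le p'$, I would split on whether $q'\le 2q-1$ or $q'\ge 2q$. In the former range, the concluding clause of Proposition~\ref{prop:sp_to_sp1} applies verbatim and delivers $p\le p'$. In the latter range, $q'\ge 2q\ge 4$, so the hypotheses $p\ge q\ge 2$ and $p>q'\ge 3$ of Proposition~\ref{pro:general_diag_type} are met by the transposed proper holomorphic map $f^\dagger:D^{\mathrm{I}}_{q,p}\to D^{\mathrm{I}}_{q',p'}$. Invoking that proposition yields a rational proper moduli map $g:U\subset D_{q,p}\to D_{q',p'}$ with respect to the double fibrations of Section~\ref{Sec:DF_FIPM}. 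To conclude $p\le p'$ from the existence of $g$, I would argue by contradiction: assuming $p>p'$, we have $q\le\min\{p',q'\}\le\max\{p',q'\}<p$, so Proposition~\ref{Pro:NonEx_PHM2} is applicable and rules out $\min\{p',q'\}\le 2q-1$; we are thus left with the narrow sub-regime $p',q'\ge 2q$. In that sub-regime, I would aim to show that $\deg(g)=1$, whereupon Proposition~\ref{pro:linear_rational_proper} immediately yields $p\le p'$, contradicting the assumption $p>p'$.

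The main obstacle is precisely establishing linearity of the moduli map $g$ in the regime $q'\ge 2q$, since the natural rigidity tool Proposition~\ref{pro:rat_proper_FIP_2} explicitly demands $q'<2q-1$. To handle this I would attempt to iterate the reduction supplied by Case~(2) of Theorem~\ref{thm:BEH11_MainThm}: applied to $g$, that theorem produces an induced rational proper map to $D_{q'-1,p'-1}$, whose iteration drops both codomain indices in parallel until the parameters land in the rigid range, at which point linearity can be pulled back to the original $g$. If this pullback argument proves too delicate, an alternative route is to exploit the fact that $g$ is fibral-image-preserving as a moduli map of $f^\dagger$, and to apply Proposition~\ref{Pro:GenBall_FIPM1} together with the structural results of Section~\ref{subsection:B_FIPM} to force the codimension of the image to degenerate in a way incompatible with $p>p'$.
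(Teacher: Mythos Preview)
Your treatment of $q\le q'$ and of the case $q'\le 2q-1$ is fine, but there is a genuine gap in the regime $q'\ge 2q$. Your proposed fix---iterating Case~(2) of Theorem~\ref{thm:BEH11_MainThm}---cannot start: that theorem carries the hypothesis $q'<2q-1$, so it is simply unavailable here, and no amount of iteration helps since the obstruction is at the very first step. The alternative you sketch via Proposition~\ref{Pro:GenBall_FIPM1} and Section~\ref{subsection:B_FIPM} is too vague to count as a proof; those tools do not by themselves force linearity of $g$ or bound $p'$ from below.

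The paper bypasses this difficulty by a much simpler trick that avoids any small-signature-difference hypothesis. It does not split on $q'$ versus $2q-1$ at all, but on $q'=2$ versus $q'\ge 3$. For $q'\ge 3$ one obtains (via Proposition~\ref{pro:general_diag_type} applied to $f^\dagger$) a rational proper moduli map $g:U\subset D_{q,p}\to D_{q',p'}$, exactly as you do. Now assume $p'<p$ for contradiction and compose with the standard linear embedding $\iota:D_{q',p'}\hookrightarrow D_{q',p}$. The point is that $\iota\circ g:D_{q,p}\to D_{q',p}$ has \emph{equal} second index, so Theorem~\ref{thm:thm1.1&1.4_BH05} (Baouendi--Huang) applies with $s=s'=p$ and $r'=q'\ge r=q\ge 2$, yielding that $\iota\circ g$, and hence $g$, is linear. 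Proposition~\ref{pro:linear_rational_proper} then gives $p\le p'$, the desired contradiction. No condition on $q'$ relative to $2q-1$ is ever needed.
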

\begin{proof}
Since $f$ is proper, it is clear that $q\le \min\{p',q'\}\le q'$ by considering the rank of the type-$\mathrm{I}$ domains and using a result of Tsai \cite{Tsai:1993}.

Suppose $q'=2$. Then, we also have $q=2$ and thus $f$ has to be a standard embedding by Tsai \cite[Main Theorem]{Tsai:1993}. In particular, $f$ maps every $(1,p-1)$-subspace of $D^{\mathrm{I}}_{2,p}$ properly into a $(1,p'-1)$-subspace of $D^{\mathrm{I}}_{2,p'}$ by Tsai \cite{Tsai:1993}.
This induces a proper holomorphic map from $\mathbb B^{p-1}$ to $\mathbb B^{p'-1}$, and it follows that $p\le p'$.

Suppose $q'\ge 3$. Then, by Proposition \ref{pro:general_diag_type} there exists a rational proper map $g$ $:$ $U$ $\subset$ $ D_{q,p}$ $\to$ $D_{q',p'}$.
Assume the contrary that $p'<p$. Then, $\iota\circ g$ is a rational proper map from $D_{q,p}$ to $D_{q',p}$, where $\iota:D_{q',p'}\hookrightarrow D_{q',p}$ is the standard linear embedding, $\iota([A,B]_{q'}):=[A;B,{\bf 0}]_{q'}$.
By Baouendi-Huang \cite[Theorem 1.4]{Baouendi_Huang:2005}, $\iota\circ g$ is a standard linear embedding, and so is $g$.
But then this implies that $p\le p'$, a plain contradiction.
Hence, we have $p\le p'$, as desired.  
\end{proof}

\noindent\textbf{Acknowledgements}\\
The author would like to thank Professor Ngaiming Mok for invaluable comments and his encouragement during the preparation of this article. The author would also like to thank the referees for invaluable comments and for pointing out an error in the previous version of this article.

\end{document}